\newcommand{\mh}[0]{\color{red}}
\newcommand{\hm}[0]{\color{blue}}
\newcommand{\hg}[0]{\color{darkgreen}}
\DeclareMathOperator{\diag}{diag}	%diagonal matrix
\DeclareMathOperator{\rank}{rank}	%matrix rank
\DeclareMathOperator{\vect}{vec}	%vectorization of a matrix
\DeclareMathOperator{\sgn}{sgn}	%sign
\DeclareMathOperator{\conv}{conv}	%convex hull
\DeclarePairedDelimiter\parentheses{\lparen}{\rparen}	%for () after an operator
\DeclarePairedDelimiter\braces{\lbrace}{\rbrace}	%for {} after an operator
\def\st{\mathrm{s.t.}}
\newcommand{\R}[0]{{\mathbb{R}}}    %reals
\def\Mid#1{{#1}^c}		%center of interval
\def\Rad#1{{#1}^\Delta}		%radius of interval
\newcommand{\omace}[1]{\mbox{$\overline{#1}$}}	%upper limit of int matrix
\newcommand{\umace}[1]{\mbox{$\underline{#1}$}}  %lower limit of int matrix
\newcommand{\ovr}[1]{\mbox{$\overline{#1}$}}	%upper limit int vector
\newcommand{\uvr}[1]{\mbox{$\underline{#1}$}}	%lower limit int vector
\newcommand{\onum}[1]{\overline{{#1}}} 	%upper limit interval
\newcommand{\unum}[1]{\underline{{#1}}} 	%lower limit interval
\newcommand{\mmid}[0]{:}		%
\newtheorem{assumption}[theorem]{Assumption}
\begin{document}

\title{An Overview of Absolute Value Equations: From Theory to Solution Methods and Challenges}

%\subtitle{Do you have a subtitle?\\ If so, write it here}

\titlerunning{An Overview on Absolute Value Equations}        % if too long for running head

\author{	Milan Hlad\'{\i}k \and 
           Hossein Moosaei$^*$ \and
	Fakhrodin Hashemi \and\\
	 Saeed Ketabchi  \and
		Panos M. Pardalos 	
					}
	
  %etc.

%\authorrunning{Short form of author list} % if too long for running head

\institute{$*$ Corresponding Author \at
	\and
 	 Milan Hlad\'{\i}k  \at
	Department of Applied Mathematics, Faculty  of  Mathematics  and  Physics, Charles University, Prague, Czech Republic \\
	\email{hladik@kam.mff.cuni.cz}   
 	\and
	Hossein Moosaei \at
	Department of Informatics, Faculty of Science, Jan Evangelista Purkyně University, \'{U}st\'{i} nad Labem, Czech Republic\\
	\email{hmoosaei@gmail.com,  hossein.moosaei@ujep.cz }
	\and
	Fakhrodin Hashemi\at
	Department of Applied Mathematics, Faculty of Mathematical Sciences,  University of Guilan, Rasht, Iran \\
	%              Tel.: +123-45-678910\\
	%              Fax: +123-45-678910\\
	\email{fhashemi@phd.guilan.ac.ir}
	\and 
		Saeed Ketabchi \at
	Department of Applied Mathematics, Faculty of Mathematical Sciences,  University of Guilan, Rasht, Iran \\
	%              Tel.: +123-45-678910\\
	%              Fax: +123-45-678910\\
	\email{sketabchi@guilan.ac.ir}
	\and 
		Panos M. Pardalos
  \at
	Center for Applied Optimization, Department of Industrial and Systems Engineering, University of Florida, Gainesville, 32611, USA \\
	\email{pardalos@ise.ufl.edu}
						}

\date{Received: date / Accepted: date}
% The correct dates will be entered by the editor
%The correct dates will be entered by the editor.
\maketitle
\begin{abstract} 
This paper provides a thorough exploration of the absolute value equations $Ax-|x|=b$, a seemingly straightforward concept that has gained heightened attention in recent years. It is an NP-hard and nondifferentiable problem and equivalent with the standard linear complementarity problem. Offering a comprehensive review of existing literature, the study delves into theorems concerning the existence and nonexistence of solutions to the absolute value equations, along with numerical methods for effectively addressing this complex equation. Going beyond conventional approaches, the paper investigates strategies for obtaining solutions with minimal norms, techniques for correcting infeasible systems, and other pertinent topics. By pinpointing challenging issues and emphasizing open problems, this paper serves as a valuable guide for shaping the future research trajectory in this dynamic and multifaceted field.
\end{abstract}
\keywords{Absolute value equation \and Optimization problem \and Nonconvex optimization \and Nonsmooth problem \and Minimum norm solution \and Correction of infeasible system.}
%\subclass{90C26\and 90C30\and 49M37\and 49M30}
%
%%All acknowledgements should be placed in the back of the paper after Conclusions..
%

\paragraph{Nomenclature.}
We use $e=(1,\dots,1)^{\top}$ for the vector of ones and $e_k$ for the $k$th canonical unit vector (with convenient dimensions). 
By $A^T$ we denote the Hermitian transpose, $I_n$ the identity matrix of size~$n$, and $\diag(s)$ stands for the diagonal matrix with entries $s_1,\dots,s_n$. Further, $\rho(\cdot)$ denotes the spectral radius, and  $\sigma_{\min}(\cdot)$ and $\sigma_{\max}(\cdot)$ the minimum and maximum singular values, respectively. By default, we use the Euclidean norm for vectors, and for matrices, we use the spectral norm, i.e., $\|A\|=\sigma_{\max}(A)$. 
The other matrix norms that we use are the induced matrix $p$-norm defined as
$$
\|A\|_p\coloneqq \max_{x:\|x\|_p=1}\|Ax\|_p.
$$
In particular, $\|A\|_\infty=\max_{i}\sum_{j}|a_{ij}|$. 
The $i$th row and $j$th column of a matrix $A$ are denoted  $A_{i*}$ and $A_{*j}$, respectively. 
The sign of a real $r$ is defined $\sgn(r)=1$ if $r>0$ and  $\sgn(r)=-1$ otherwise. 
 The absolute value and the sign function are applied entry-wise on vectors and matrices. Also matrix comparisons, like $A \leq B$ or $A < B$, are interpreted component-wise.
For a vector $x\in\R^n$, we use the shortcut $D(x)\coloneqq\diag(\sgn(x))$; this enables us to write $|x|=D(x)x$.  
An interval matrix\label{dfIntMat}
$[\underline{M},\overline{M}]=[\Mid{M}-\Rad{M},\Mid{M}+\Rad{M}]$ is a set of matrices 
$\{M\mmid \underline{M}\leq M \leq \overline{M}\}
=\{M\mmid |M-\Mid{M}|\leq \Rad{M}\}$.
The convex hull of a set $S$ is denoted $\conv{S}$.
%{\mh[We use $\partial f$ sometimes for subgradient, sometimes for subdifferetial. This should be distinguished.]\hm[In the current version, we only use this notation three times, and it appears to be clear.]}

%\renewcommand{\contentsname}{Contents \mh{[this is temporary]}}
%\setcounter{tocdepth}{2}
%\tableofcontents

%%%%%%%%%%%%%%%%%%%%%%%%%%%%%%%%%%%%%%%%%%
\section{Introduction}

In recent decades, in the field of optimization and numerical analysis, the absolute value equations have been considered by many researchers. 
%, so they have published many papers in this area over the past 20 years. 
The AVE plays an essential role due to its theoretical aspect and applications. Systems of this type were addressed in the 1980s (see \cite{Roh1984b,Roh1984c,Roh1989}) and termed absolute value equation (AVE) by Mangasarian and Meyer in~\cite{mangasarian2006absolute}.\footnote{Some authors call them piecewise linear systems~\cite{BruCas2008}.} 

A system of absolute value equations is represented as 
\begin{align}\label{Eq2}
Ax-|x|=b,
\end{align}
where $A\in \R^{n\times n}$, $b \in \R^n$ are given and $x\in\R^n$ is the unknown. As a slight generalization of the AVE, a system of generalized absolute value equation (GAVE) has the form

\begin{align} \label{EqGAVE}
Ax-B|x|=b;
\end{align}
we discuss it in Section~\ref{ssGAVE}.

It is worth noting that some scholars consider an alternative expression  $Ax + |x| = b$, as the canonical form of the AVE and $Ax + B|x| = b$ for the GAVE. For the sake of presenting the results uniformly throughout this paper, we reformulate the AVE and GAVE as given in \eqref{Eq2} and \eqref{EqGAVE}, respectively.

Due to the presence of absolute values in these systems, various computational challenges arise for both AVE and GAVE. Specifically, the fundamental problem of verifying the solvability of the AVE is NP-hard~\cite{mangasarian2007absolute}.
%The linear complementarity problem (LCP)~\cite{cottle2009linear}, which is a large class of problems, is a well known problem in mathematical programming and optimization community. Applications of LCP to engineering, economics, game theory,  and many other scientific fields have been investigated. One of the interesting features for AVE is its equivalence to the LCP \cite{mangasarian2006absolute,mangasarian2007absolute}.

This comprehensive overview strives to present fundamental and noteworthy findings pertaining to theories, applications and solution methods. Additionally, we tackle challenges related to discovering solutions with minimum norms and addressing infeasible systems, along with other pertinent issues. The paper delves into the intricacies of these aspects, offering a deeper understanding of the subject matter. Additionally, it highlights the challenges encountered in the process, shedding light on potential areas for further research and development. Within this context, the paper presents a historical review and prominent issues concerning the AVE and its solution methods. Importantly, it refrains from passing judgment on the methods' efficiency or correctness. The paper's objectives are to prevent parallel work, provide a roadmap for generating new algorithms and methods to solve the AVE problems and evaluate the current situation of existing algorithms and methods from both analytical and numerical perspectives.

\paragraph{Motivations of Absolute Value Equation.}%\label{MotivatAVE}
%%%%%%%%%%%%%%%%%%%%%%%%%%%%%
% {\hm [I will remove.] Many issues related to AVE are computationally hard due to  the absolute value in this problem, in particular, checking solvability of AVE is NP-hard~\cite{mangasarian2007absolute}. }
% {\mh[I it not necessary to mention many times that AVE is NP-hard. Write it just once, but we can also include the proof (it is simple).] We should also somewhere identify the known polynomial cases.}
% Here we would like to describe what makes AVE interesting for the optimization community. We can mention at least four applications of the AVE. 
Systems with absolute values arise naturally in many areas. Below, we list a few particular problems yielding absolute value equations. Notice that absolute value systems were also introduced as an abstract domain in computer programming~\cite{ChenWei2023}.
 
\begin{enumerate}[(1)]
 \item 
 One of the most important motivations was the linear complementarity problem (LCP)~\cite{cottle2009linear}. Both problems are equivalent (see Section~\ref{RAVELCP}), and this equivalence brings new perspectives for both sides.

\item 
A source of the AVE is the theory of interval computations, where AVE appears in the characterization of certain solutions of interval systems of linear equations~\cite{Roh1989} or in the characterization of the regularity of interval matrices~\cite{Roh2009}.

\item 
Continuous piecewise linear function can be represented by various means. The representation by the GAVE~\cite{GriBer2015} is an alternative to the max-min representation \cite{Ovch2002,Scho2012} and to the so-called canonical representation by an explicit formula involving arithmetic operations and absolute values~\cite{ChuaDen1988,LinUnb1995}.

\item 
Many other (NP-hard) problems can easily be formulated as the AVE or GAVE. This includes, for example, the Set-Partitioning problem or the 0-1 knapsack feasibility problem; see Theorem~\ref{thmNpHardSP}. 
%of finding an $n$-dimensional binary integer vector $y \in \{0, 1\}^n$ such that $a^{\top}y=c$, where $a$ is an $n$-dimensional column vector of positive integers and $c$ is a positive integer.
%Mangasarian~\cite{mangasarian2009knapsack} showed that it is equivalent to AVE.
%The $n$-dimensional knapsack feasibility problem is equivalent to the absolute value equations (AVE) in an $n$-dimensional noninteger real variable space~\cite{mangasarian2009knapsack}.
%\begin{proposition}
%[Proposition 1 in~\cite{mangasarian2009knapsack}] 
%%(Knapsack feasibility equivalence to absolute value equation) 
%The knapsack feasibility problem is equivalent to the absolute value equation~(\ref{Eq2}).
%\end{proposition}

\item 
%Some problems can be studied by the AVE such as boundary value problems \cite{yong2015iteration,noor2018generalized}. For example, 
The AVE naturally appear when solving certain differential equations, in particular, free-surface hydrodynamics \cite{BruCas2008}, such as flows in porous media \cite{BruCas2009,CasZan2012}. Another problems involve the absolute value explicitly; consider the following boundary value problem~\cite{yong2015iteration,noor2018generalized}:
\begin{align*}
    & \frac{d^2u}{dt^2}- |u|=f(t),\ \ f(t) \in C[a,b].     \\
   & u(a)=\alpha,\ u(b)=\beta.
\end{align*}
By using the finite difference method, this problem leads to the AVE~\cite{yong2015iteration}.

\end{enumerate}

\paragraph{Roadmap.}
%In Section~\ref{MotivatAVE}, we introduce some motivations for the AVE.
The theorems for the existence and nonexistence of solutions to the AVE are presented in Section~\ref{ThResults}. Some extensions for the GAVE are considered in Section~\ref{ssGAVE}. 
Algorithmic aspects are addressed in Section~\ref{NumerRes}, where different approaches to solving theAVE and GAVE are presented. Finding the minimum norm solution or a sparse solution of the AVE, optimal correction of an infeasible AVE, and a relation to interval analysis is discussed in Section~\ref{Otherapro}. Challenges and open problems for future works are proposed in Section~\ref{Challenges}. %Section~\ref{Conclu} concludes the paper.
%{\mh[Rather use cross-references] \hm [I did.]}

%%%%%%%%%%%%%

%%%%%%%%%%%%%%%%%%%%%%%%%%%%%%%%%%%%%%%%%%%%%%%%%%%%%%%%
 \section{Theoretical Results for the AVE}\label{ThResults}

In this section, we review the theoretical results for the AVE. We also bring some new insights.

%%%%%%
\subsection{The Solution Set}\label{SolutionSet}

 The solution set of the AVE is denoted by $$\Sigma=\{x\in{\R}^n\mmid Ax-| x|=b\}.$$
If it is nonempty, it can have finitely or infinitely many solutions. In the first case, it has at most $2^n$ solutions lying in mutually distinct orthants. For example, if $A=0$ and $b=e$, then the AVE reads $|x|=e$; it possesses $2^n$ solutions and $\Sigma=\{\pm1\}^n$. 
In fact, for every natural $n$ there is an AVE system having exactly $n$ solutions.\footnote{Personal communication with Ji\v{r}\'{\i} Sgall.}
 
 Despite the fact that $\Sigma$ is not convex in general, it can be created by a union of at most $2^n$ convex polyhedra. An orthant decomposition demonstrates this clearly: Let $s\in\{\pm1\}^n$ and consider the orthant determined by the sign vector~$s$, characterized by $\diag(s)x\geq0$. Therefore, the solution set lying in this orthant is a convex polyhedron described
 $$
 \Sigma\cap\{x\in{\R}^n\mmid \diag(s)x\geq0\}
 =\{x\in{\R}^nmmid (A-\diag(s))x=b,\ \diag(s)x\geq0\}.
 $$
 
 \begin{figure}[t]
\begin{subfigure}[b]{0.47\textwidth}
\centering%\small
\psset{unit=4.975ex,arrowscale=1.5}\footnotesize
\begin{pspicture}(-4,-4.9)(4,3)
\psaxes[ticksize=2pt,labels=all,ticks=all, showorigin=false, Dx=1,Dy=1]{->}(0,0)(-3.2,-4.4)(3,2)
\psset{linecolor=blue}
\qdisk(1,0){2.3pt}
\qdisk(-1,-4){2.3pt}
\qdisk(-1,1.3333){2.3pt}
\end{pspicture}
\caption{%Example of AVE
$\begin{pmatrix}0&0\\-1&-0.5\end{pmatrix}x-|x|=\begin{pmatrix}-1\\-1\end{pmatrix}$.}\label{figSolSet1}
\end{subfigure}
\hfill
\begin{subfigure}[b]{0.47\textwidth}
\centering
\psset{unit=4.975ex,arrowscale=1.5}\footnotesize
\begin{pspicture}(-2.7,-2.9)(6,4)
\psaxes[ticksize=2pt,labels=all,ticks=all, showorigin=false]{->}(0,0)(-2.2,-2.4)(5.2,3.2)
%\uput[-135](-0.05,-0.08){$0$}
\psline[linewidth=1.5pt,linecolor=blue](3,0)(5.2,2.2)
%\uput[45](0.4,0.4){\blue$\Sigma$}
\psset{linecolor=blue}
\qdisk(-1,-2){2.3pt}
\end{pspicture}
\caption{%Example of AVE
$\begin{pmatrix}0&1\\-2&3\end{pmatrix}x-|x|=\begin{pmatrix}-3\\-6\end{pmatrix}$.}\label{figSolSet2}
\end{subfigure}
\caption{Two examples of AVEs.\label{figSolSet}}
\end{figure}
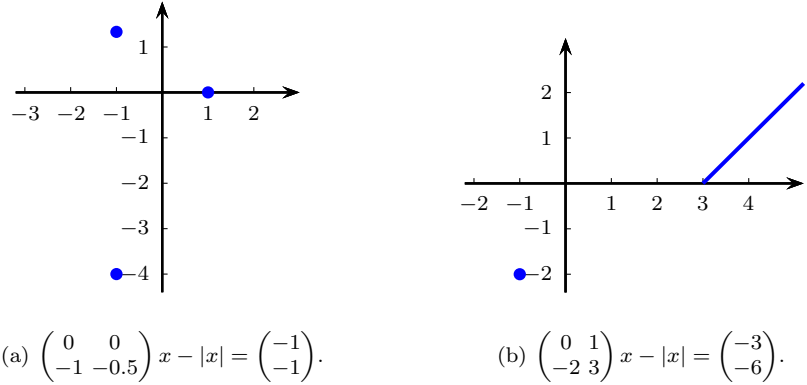

%\begin{figure}[htp!]
%		\centering
%		\includegraphics[width=10cm]{Figsolutionave1.jpg}
%		\caption{Two examples of AVE.}
%		\label{figSolSet}
%	\end{figure}

Figure~\ref{figSolSet} provides an illustration of two AVE systems. 
%as the forms
%(a)$\begin{pmatrix}	-3&0\\	3&-1\end{pmatrix} x-| x|=\begin{pmatrix}	2\\	-3	\end{pmatrix}$ and $		\begin{pmatrix}		1&-2\\		1&0		\end{pmatrix} x-| x|=\begin{pmatrix}		1\\		-1		\end{pmatrix}$.
%Figure~\ref{figSolSet}(a) shows an AVE the solutions set which consists of one unbounded line. 
%		The solutions set in Figure~\ref{figSolSet}(b) consists of an isolated point.
In the first case, the solution set consists of three points, and in the second case, it is formed by the union of a singleton and a ray. 

%%%%%%%%%%%%%%%%%%%%%%%%%%%%%%%%%%%%%%%%%%%%%%%%%%%%%%%%
\subsection{Relationships Between the AVE and the LCP} \label{RAVELCP}

 The linear complementarity problem (LCP)~\cite{cottle2009linear} is a well-established problem in mathematical programming. 
There is a strong connection between the AVE and theLCP~\cite{mangasarian2006absolute,mangasarian2007absolute}. These problems are not only equivalent but also serve as a source for deriving basic properties of the AVE.

Recall that the LCP~\cite{cottle2009linear} is formulated as a feasibility problem with variables $w,z\in \R^n$
	\begin{align}\label{lcp}
	w=Qz+q,\ w^{\top}z=0,\ w,z\geq0.
		\end{align}
The LCP appears in many optimization problems, including quadratic programming, bimatrix games, and economic problems. 
Below, we present the equivalence between the AVE and the LCP \cite{mangasarian2006absolute,mangasarian2007absolute}.

%%%
\paragraph{Reduction AVE $\rightarrow$ LCP.} 
%\emph{Reduction AVE $\rightarrow$ LCP.}
Suppose that $A-I_n$ is nonsingular (a reduction avoiding this assumption was proposed in~\cite{prokopyev2009equivalent}). The positive and negative parts of $x$ are denoted by $x^+$ and $x^-$, respectively. They can be expressed as follows:
$$
x=x^+-x^-,\ |x|=x^++x^-,\ (x^+)^{\top}x^-=0,\ x^+,x^-\geq0.
$$
Thus, the AVE takes the form
\begin{align}\label{aveAsLCP}
A(x^+-x^-)-x^+-x^-=b,\ (x^+)^{\top}x^-=0,\ x^+,x^-\geq0,
\end{align}
from which we obtain the LCP problem
$$
%x^+=(A-I_n)^{-1}(A+I_n)x^-+(A-I_n)^{-1}b,\ (x^+)^{\top}x^-=0,\ x^+,x^-\geq0.
x^-=(A+I_n)^{-1}(A-I_n)x^+-(A+I_n)^{-1}b,\ (x^+)^{\top}x^-=0,\ x^+,x^-\geq0.
$$

%%%%%%%%%%%%%%%%
\paragraph{Reduction LCP $\rightarrow$ AVE.}
We present the reduction from~\cite{mangasarian2007absolute}.
Suppose that $Q-I_n$ is nonsingular; this assumption does not affect generality since we can scale matrix $Q$ by any positive scalar. 
Now, the reduction is based on the substitution $w\equiv |x|-x$, $z\equiv |x|+x$. Hence, we can write the LCP as
$$
	|x|-x=Q|x|+Qx+q,
$$
which is equivalent to the AVE
$$
(I_n-Q)^{-1}(I_n+Q)x-|x|=(Q-I_n)^{-1}q.
$$
\paragraph{Computational Complexity.}
As we already mentioned, the AVE problem is intractable~\cite{mangasarian2007absolute}. We present the result with its illustrative proof.

\begin{theorem}\label{thmNpHardSP}
It is NP-hard to check if the AVE is solvable.
\end{theorem}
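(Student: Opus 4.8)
The plan is to establish NP-hardness by exhibiting a polynomial-time reduction from a known NP-complete problem; as the surrounding discussion suggests, the 0-1 knapsack feasibility (subset-sum) problem is a convenient source. Recall that this problem asks, given $a\in\mathbb{Z}_{\ge0}^n$ and $c\in\mathbb{Z}$, whether there exists $x\in\{0,1\}^n$ with $a^\top x=c$. My first step would be to pass to $\pm1$ variables via the substitution $s=2x-e$, turning the instance into the question of whether there is $s\in\{-1,1\}^n$ with $a^\top s=d$, where $d:=2c-a^\top e$. This is still NP-complete and is now in a form tailored to absolute values.

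The key gadget is that the AVE can separately (i) pin a coordinate to $\pm1$ and (ii) encode a linear inequality. Indeed, an equation whose coefficient row of $A$ vanishes reads $-|x_i|=b_i$, so with $b_i=-1$ it forces $|x_i|=1$, i.e.\ $x_i\in\{-1,1\}$. Moreover, since the term $|x_j|$ is always nonnegative, a row of the form $\alpha^\top s-|x_j|=\beta$ is equivalent to the inequality $\alpha^\top s\ge\beta$, with $x_j$ absorbing the nonnegative gap. I would therefore assemble a square AVE of order $n+2$ in the variables $(s,u,v)$: the first $n$ rows carry zero coefficient rows and right-hand sides $-1$, forcing $s\in\{-1,1\}^n$; one further row uses $u$ to encode $a^\top s\ge d$, and a last row uses $v$ to encode $-a^\top s\ge -d$. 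Concretely, $A$ is zero on its first $n$ rows, equal to $a^\top$ on the $s$-columns of row $n+1$ and to $-a^\top$ on the $s$-columns of row $n+2$, with $b=(-e^\top,\,d,\,-d)^\top$.

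I would then verify the two-way correspondence. In any AVE solution, the first $n$ rows give $s\in\{-1,1\}^n$, while the last two rows yield simultaneously $a^\top s\ge d$ and $a^\top s\le d$, hence $a^\top s=d$ (and $u=v=0$). Conversely, any feasible $s$ extends to an AVE solution by setting $u=v=0$. Thus the AVE is solvable if and only if the subset-sum instance is feasible; since the construction has polynomial size, NP-hardness follows.

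The only genuinely delicate point, and the step I expect to be the main obstacle, is that a single AVE row can express only an inequality, because the absolute-value term contributes a nonnegative slack. Representing the equality $a^\top s=d$ therefore requires two rows with two independent slack variables, and one must respect the diagonal alignment intrinsic to the AVE — equation $i$ always carries exactly $|x_i|$ — when indexing $s,u,v$. A cosmetic alternative is to reduce instead from set partitioning $Ms=e$, applying the same $\ge/\le$ trick rowwise at the cost of $2m$ slack variables.
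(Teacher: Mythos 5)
Your proof is correct and takes essentially the same route as the paper: the paper also forces $x\in\{\pm1\}^n$ via $n$ rows with zero coefficients and right-hand side $-1$, and encodes the linear equality through two extra rows $-a^{\top}x-|x_{n+1}|=0$ and $a^{\top}x-|x_{n+2}|=0$, whose absolute-value terms serve exactly as your nonnegative slacks $u,v$. The only difference is cosmetic: the paper reduces directly from Set-Partitioning (target $0$), whereas you first rescale a subset-sum instance into the same $\pm1$ form with a general target $d$.
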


\begin{proof}
We use a reduction from the Set-Partitioning problem: 
\begin{center}
Given $a\in\mathbb{Z}^n$, exists $x\in\{\pm1\}^n: a^{\top} x=0$?
\end{center}
We write it as
$$
|x|=e,\ a^{\top} x=0.
$$
Equivalently, in the canonical form \eqref{Eq2} of size $n+2$, it reads
\begin{align*}
|x|=e,\ \ -a^{\top} x-|x_{n+1}|=0,\ \ a^{\top} x-|x_{n+2}|&=0.
\tag*{\qedhere}
\end{align*}
\end{proof}

Prokopyev~\cite{prokopyev2009equivalent} showed that it is NP-hard to check whether the AVE has at least two solutions (the AVE addressed there is easily transformed to the canonical form~\eqref{Eq2}). As a consequence, we have that, given one solution of the AVE, it is still hard to decide if there exists another solution. 
Another intractable problem, dealing with unique solvability for each right-hand side vector, is stated in Corollary~\ref{corAveUnSolNp}.

%%%%%%%%%%%%%%%%%%%%%%%%%%%%%%%%%%%%%%%%%%%%%%%%%%%%%%%%
\subsection{Optimization Reformulations}\label{ssOptReform}

The solutions of AVEs can be determined as optimal solutions of certain nonlinear optimization reformulations \cite{mangasarian2006absolute,mangasarian2007absolute1,mangasarian2007absolute,mangasarian2015hybrid,zamani2021new}. In this subsection, we introduce some of them.

%\paragraph{First reformulation.}  
The first step is to rewrite the AVE system $Ax-|x|=b$ as inequalities
$$
Ax-|x|\geq b,\ \ Ax-|x|\leq b.
$$
The first inequality can be expressed as a simple linear inequality of double-size 
$$
(A+I_n)x\geq b,\ \ (A-I_n)x\geq b,
$$
the second part is the problematic one, and the role of the optimization reformulations is to achieve equality in $Ax-|x|\leq b$. The objective functions and the resulting optimization problems to this end are, e.g, 
\begin{align}
\label{minSum}
&\min\ e^{\top} (Ax-|x|-b)\ \ \st\ \ (A+I_n)x\geq b,\ (A-I_n)x\geq b,\\
\label{minAbsSq}
&\min\ (Ax-|x|-b)^{\top} (Ax-|x|-b)\ \ \st\ \ (A+I_n)x\geq b,\ (A-I_n)x\geq b,\\
\label{minPM}
&\min\ ((A + I_n)x - b)^{\top}((A- I_n)x - b)\ \ \st\ \ (A+I_n)x\geq b,\ (A-I_n)x\geq b,
\end{align}
see \cite{Hla2023a,mangasarian2006absolute,mangasarian2007absolute, mangasarian2007absolute1,mangasarian2015hybrid,zamani2021new}. The first objective function is concave and piecewise linear, the second is a piecewise convex quadratic, and the third is quadratic.
The AVE is solvable if and only if the optimization problem (any of the above) has an optimal value of~0. 

The objective function of \eqref{minPM} is strictly convex if and only if $A^\top A-I_n$ is positive definite, which is equivalent to the unique solvability sufficient condition that we state later in~\eqref{condRegSuff2}.

An alternative reformulation utilizes the transformation \eqref{aveAsLCP} and rewrites the AVE as
\begin{align*}
(A-I_n)x-(A+I_n)y= b,\ x,y\geq0,\ x^{\top} y=0.
\end{align*}
Now, keeping the linear constraints and minimizing the complementarity, we arrive at
\begin{align}
&\min\ x^{\top} y\ \ \st\ \ (A-I_n)x-(A+I_n)y= b,\ x,y\geq0.
\label{minBilin}
\end{align}

%\paragraph{Bilinear optimization problem.} Mangasarian and Meyer \cite{mangasarian2006absolute} showed that AVE  is solvable iff the bilinear program 
%\begin{subequations}\label{bilinear} 
%\begin{align}
%&\min_{x\in{\R}^n}\ ((A + I_n)x - b)^{\top}((A- I_n)x - b) \nonumber\\ 
%&\st\ \  (A+I_n)x\leq b,\  (A-I_n)x\leq b\nonumber
%\end{align}\end{subequations}
%has optimal value~0. 				

%{\mh[This is to be completed.]}

%{\hm 
%\paragraph{Concave minimization problem} The AVE  can be reformulated to the following concave minimization problem \cite{mangasarian2007absolute1}:

%\begin{subequations}\label{concaveM}
%\begin{align}
%&\min_{x\in{\R}^n}\ e^{\top}(Ax - b)-e^{\top}|x| \nonumber \\ 
%&\st\ \  -(A + I)x \leq -b ,\ (-A+I)x \leq -b.  \nonumber
%\end{align}\end{subequations}

\paragraph{Linear Mixed 0–1 Reformulation.}  
Prokopyev~\cite{prokopyev2009equivalent} proposed an equivalent linear mixed 0–1 reformulation of the GAVE (and the AVE in particular). 
%Motivated by~\cite{pardalos1988linear}, he showed that the GAVE $Ax-B|x|=b$ can be reformulated to a linear mixed 0–1 problem as follows:
Assuming that $b\not=0$, the AVE can be reformulated as a linear mixed 0–1 problem 
\begin{align*} 
	&\max_{\alpha,x,y,z}\ \alpha \\ 
	&\st\ \ Ax - y= \alpha b,\;0 \leq y+x  \leq e-z,\ 0 \leq y-x \leq z,\ \alpha \geq 0, \ z \in \{0,1\}^n.
\end{align*}
Let $(\tilde\alpha,\tilde{x},\tilde{y},\tilde{z})$ be any feasible solution of this problem. If $\tilde{\alpha}>0$, then $x\coloneqq\tilde{x}/\tilde{\alpha}$ is a solution of AVE. Further, let $(\alpha^*,x^*,y^*,z^*)$ be an optimal solution of the 0-1 problem. If $\alpha^*=0$, then the AVE is infeasible. If $\alpha^*>0$, then $x\coloneqq x^*/\alpha^*$ is a minimum norm solution of the AVE (using the maximum norm).

%He proved that if $\alpha=\hat{\alpha} > 0$ be any feasible solution of the above problem with corresponding $t=\hat{t}$, then $x=\frac{\hat{\alpha}}{\hat{t}}$ {\mh[Check]} is a solution to the mentioned GAVE.

The above formulation employs $2n+1$ continuous variables and $n$ binary variables. We can reduce the number of continuous variables if we have some initial bounds $x\in[\uvr{x},\ovr{x}]$ for the solutions (certain bounds will be discussed in Section~\ref{sssSolvty}). Then, we can rewrite the AVE equivalently as 
\begin{align*} 
Ax-y= b,\ \ -y\leq x\leq y,\ \ 
y\leq x - 2 \diag(\uvr{x}) z,\ \ 
y\leq -x + 2 \diag(\ovr{x}) (e-z),
%\quad   z\in \{0,1\}^n.
\end{align*}
where $x,y\in\R^n$ and $z\in \{0,1\}^n$. Eliminating $y$, we obtain a mixed-integer linear system with  $n$ continuous variables and $n$ binary variables,
%\begin{align*} 
%(A-I_n)x \geq b,\ \ (A+I_n)x \geq b,\ \ 
%(A-I_n)x\leq b - 2 \diag(\uvr{x}) z,\ \ (A+I_n)x\leq b + 2 \diag(\ovr{x}) z.
%\end{align*}
\begin{align*} 
(A-I_n)x &\geq b,\ \ &
(A-I_n)x&\leq b - 2 \diag(\uvr{x}) z,\\ 
(A+I_n)x &\geq b,\ \ &
(A+I_n)x&\leq b + 2 \diag(\ovr{x}) (e-z).
\end{align*}

%%%%%%%%%%%%%%%%%%%%%%%%%%%%%%%%%%%%%%%%%%%%%%%%%%%%%%%%
\subsection{Existence of Solutions of the AVE}\label{sSolv}

Conditions for the solvability of the AVE can be obtained by transforming it into an LCP (as presented above) and utilizing conditions for the LCP. However, owing to the special structure of the AVE, alternative conditions were derived directly for the AVE.

%%%%%%%%%%%%%%%%
\subsubsection{Solvability}\label{sssSolvty}

Solvability can be checked by adapting the optimization reformulations from Section~\ref{ssOptReform}. Consider the LP problem
\begin{subequations}\label{primal1}
\begin{align}
	&\min_{x,y}\ 0^{\top}x+0^{\top}y\\
	&\st\ \ Ax-y=b,\ x+y\geq 0,\ -x+y\geq 0,
\end{align}\end{subequations}
which results from the relaxation of the absolute value. Its dual reads
\begin{subequations}\label{dual1}
\begin{align}
	&\max_{u,v,w}\ b^{\top}u+0^{\top}v+0^{\top}w\\
	&\st\ \ A^{\top}u+v-w=0,\ -u+v+w= 0,\ v, w\geq 0.
\end{align}\end{subequations}
Based on the optimality conditions in LP and complementary slackness, 
Mangasarian~\cite{mangasarian2017sufficient} obtained the following sufficient condition for solvability.

\begin{proposition}
Let $(x, y)$ be an optimal solution of \eqref{primal1} and $(u, v, w)$ an optimal solution of~\eqref{dual1}. If $u>0$, then $x$ solves the AVE. Moreover, we have
\begin{align*}
x_{i}\geq 0 \mbox{ if } w_{i}>0,
\mbox{ \ and \ }
x_{i}\leq 0 \mbox{ if } v_{i}>0,\ \ i=1,\dots,n.
\end{align*}	
\end{proposition}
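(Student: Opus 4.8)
The plan is to derive everything from linear programming duality and complementary slackness applied to the primal--dual pair \eqref{primal1}--\eqref{dual1}. First I would record what primal feasibility of $(x,y)$ says: the equality $Ax-y=b$ holds, and the two inequalities $x+y\ge0$, $-x+y\ge0$ read $y_i\ge x_i$ and $y_i\ge -x_i$ for each $i$, i.e.\ $y\ge|x|$ componentwise. Consequently $x$ solves the AVE as soon as one proves the reverse bound $y=|x|$, since then $Ax-|x|=Ax-y=b$. So the whole argument reduces to upgrading the relaxed inequality $y\ge|x|$ to an equality, and the hypothesis $u>0$ is exactly what will force this.

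Next I would write out the complementary slackness conditions attached to the two inequality constraints. As $v\ge0$ is the multiplier of $x+y\ge0$ and $w\ge0$ is the multiplier of $-x+y\ge0$, optimality of the two solutions gives
$$
v_i\,(x_i+y_i)=0,\qquad w_i\,(-x_i+y_i)=0,\qquad i=1,\dots,n.
$$
The key structural observation comes from the second dual equality $-u+v+w=0$, i.e.\ $v+w=u$. Under the assumption $u>0$ this yields $v_i+w_i=u_i>0$ for every $i$, so in each coordinate at least one of $v_i,w_i$ is strictly positive.

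I would then finish with a coordinatewise case analysis. If $w_i>0$, complementary slackness forces $y_i=x_i$; combined with $y_i\ge -x_i$ this gives $x_i\ge0$, hence $y_i=x_i=|x_i|$. Symmetrically, if $v_i>0$, then $y_i=-x_i$, and $y_i\ge x_i$ forces $x_i\le0$, hence $y_i=-x_i=|x_i|$. Since one of the two cases always applies, $y_i=|x_i|$ holds in every coordinate, so $y=|x|$ and $x$ solves the AVE; the same case analysis simultaneously delivers the sign statements $x_i\ge0$ when $w_i>0$ and $x_i\le0$ when $v_i>0$.

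The only genuinely delicate point is verifying that the two cases are exhaustive and consistent: a coordinate may satisfy both $v_i>0$ and $w_i>0$, in which case $y_i=x_i$ and $y_i=-x_i$ hold together, forcing $x_i=y_i=0$, which is harmless and still equals $|x_i|$. Thus the main obstacle is conceptual rather than computational, namely recognizing that $u>0$ is precisely the condition guaranteeing that every coordinate is activated by a positive multiplier, which is what turns the LP relaxation into an exact reformulation of the AVE.
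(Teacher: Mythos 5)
Your proof is correct and takes exactly the approach the paper indicates for this result: applying LP complementary slackness to the pair \eqref{primal1}--\eqref{dual1}, with the dual equation $u=v+w$ turning the hypothesis $u>0$ into a positive multiplier in every coordinate, which upgrades the relaxation $y\geq|x|$ to the equality $y=|x|$ and yields the sign conditions. The paper does not spell the argument out (it cites Mangasarian and notes only that it rests on LP optimality conditions and complementary slackness), and your write-up is a faithful and complete elaboration of that same route.
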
	

Based on the Picard iterations (Subsection~\ref{PicardIter}), 
%{\hm  Based on the iteration $x^{k+1}=Ax^k-b$ with $x^0=-b$, and then constructing a nonnegative sequence (i.e. $\{x^k\}$),} 
Mangasarian and Meyer~\cite{mangasarian2006absolute} obtained a condition for a nonnegative solution.

\begin{proposition}\label{prop7}
If $A\geq 0$, $\|A\| < 1$ and $b\leq 0$, then a nonnegative solution to the AVE exists.	
\end{proposition}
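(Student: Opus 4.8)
The plan is to show that a solution already lives in the nonnegative orthant, by restricting the AVE to that orthant and solving the linear system that results. On the set $\{x\geq 0\}$ we have $|x|=x$, so the AVE $Ax-|x|=b$ collapses to the linear system $(A-I_n)x=b$, equivalently $(I_n-A)x=-b$. Thus it suffices to exhibit a nonnegative solution of this linear system and then check it genuinely solves the AVE.

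First I would argue that $I_n-A$ is invertible with a nonnegative inverse. Since the spectral norm dominates the spectral radius, $\rho(A)\leq\|A\|<1$, so $I_n-A$ is nonsingular and the Neumann series applies:
$$
(I_n-A)^{-1}=\sum_{k=0}^{\infty}A^k.
$$
The hypothesis $A\geq 0$ gives $A^k\geq 0$ for every $k$, so $(I_n-A)^{-1}\geq 0$ as the componentwise limit of nonnegative partial sums.

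Next I would set $x^*\coloneqq(I_n-A)^{-1}(-b)$. Because $b\leq 0$ we have $-b\geq 0$, and a nonnegative matrix times a nonnegative vector is nonnegative, so $x^*\geq 0$. It then only remains to verify that $x^*$ solves the original AVE: since $x^*\geq 0$ we have $|x^*|=x^*$, hence $Ax^*-|x^*|=(A-I_n)x^*=-(I_n-A)x^*=-(-b)=b$. This gives a nonnegative solution.

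Equivalently, matching the Picard viewpoint of Subsection~\ref{PicardIter}, one can obtain $x^*$ constructively as the limit of $x_{k+1}=Ax_k-b$ started at $x_0=-b$: the map $x\mapsto Ax-b$ is a contraction because $\|A\|<1$, so the iterates converge to the unique fixed point $x^*$, and an immediate induction using $A\geq 0$ and $-b\geq 0$ keeps every $x_k\geq 0$, whence $x^*\geq 0$. In either route the only delicate point is guaranteeing that the computed point actually lands in the nonnegative orthant, since only there does the identity $|x^*|=x^*$ make the linear system equivalent to the AVE; this is exactly where the three hypotheses act together, the first two ($A\geq 0$ and $\|A\|<1$) forcing $(I_n-A)^{-1}\geq 0$ and the third ($b\leq 0$) forcing $-b\geq 0$.
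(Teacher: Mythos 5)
Your proof is correct and takes essentially the same approach as the paper: the paper obtains this result from the Picard iteration $x^{k+1}=Ax^{k}-b$ (a contraction since $\|A\|<1$, with nonnegativity of iterates preserved because $A\geq 0$ and $-b\geq 0$), which is exactly your second route, and your primary Neumann-series argument $x^*=(I_n-A)^{-1}(-b)=\sum_{k\geq 0}A^{k}(-b)\geq 0$ is just the closed form of that iteration's limit. In both cases the key verification $|x^*|=x^*$, which turns the linear system $(A-I_n)x=b$ back into the AVE, is handled correctly.
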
	

The condition does not imply uniqueness; consider the AVE system $|x|=e$, for example. However, the condition does imply the uniqueness of a nonnegative solution (If there are more nonnegative solutions, then $A+I_n$ is singular, which contradicts the condition $\|A\| < 1$).

Notice that Radons and Tonelli-Cueto~\cite{RadTon2023} analysed (not necessarily unique) solvability of an AVE-type system by the aligned spectrum of the constraint matrix.

%%%	
\paragraph{Bounds on the Solutions.}
From various aspects, it is important to have tight bounds on the solution set. Not only do they restrict the area where to seek for solutions, but also reduce the number of orthants to be processed. Later, we will see that these bounds also provide a test for unsolvability (Proposition~\ref{thmUnsolvHla}).
Hlad\'{\i}k~\cite{hladik2018bounds} proposed bounds in the form of a box that is symmetric around the origin. 

\begin{theorem}\label{propBoundEqAveA}
If $\rho(|A|)<1$, then each solution $x$ of the AVE satisfies
\begin{align}\label{boundPropBoundEqAveA}
|x|\leq -(I-|A|)^{-1} b.
\end{align}
\end{theorem}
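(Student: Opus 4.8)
The plan is to extract a fixed-point-style inequality directly from the AVE and then invert the matrix $I-|A|$, which is well-behaved precisely because $\rho(|A|)<1$. First I would rewrite the equation $Ax-|x|=b$ as $|x|=Ax-b$, so that the nonnegative vector $|x|$ is expressed affinely in $x$. The key observation is the component-wise chain $Ax\leq|Ax|\leq|A|\,|x|$, where the first inequality holds because $r\leq|r|$ for every real number (applied entry-wise) and the second is the standard estimate $|Ax|\leq|A|\,|x|$. Substituting this into $|x|=Ax-b$ gives $|x|\leq|A|\,|x|-b$, which rearranges to the linear inequality
\begin{align*}
(I-|A|)\,|x|\leq -b.
\end{align*}

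Next I would invoke the hypothesis $\rho(|A|)<1$. Since $|A|\geq 0$, this guarantees that the Neumann series $\sum_{k=0}^{\infty}|A|^k$ converges to $(I-|A|)^{-1}$ and that this inverse is entry-wise nonnegative (equivalently, $I-|A|$ is a nonsingular M-matrix). Left-multiplying the inequality $(I-|A|)\,|x|\leq -b$ by the nonnegative matrix $(I-|A|)^{-1}$ preserves the direction of the inequality, yielding $|x|\leq -(I-|A|)^{-1}b$, which is exactly \eqref{boundPropBoundEqAveA}.

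The argument is short, and the only genuinely delicate point is this final monotonicity step: one must be certain that left-multiplication by $(I-|A|)^{-1}$ does not reverse the inequality. This is precisely where the spectral radius condition is indispensable, as it is what certifies both the existence of $(I-|A|)^{-1}$ and its nonnegativity; without $\rho(|A|)<1$ the inverse may have negative entries (or fail to exist altogether), and the bound would collapse. As a consistency check, note that combining $0\leq|x|$ with the derived bound forces $-(I-|A|)^{-1}b\geq 0$ whenever the AVE is solvable, and that the estimate describes a box symmetric about the origin, matching the statement.
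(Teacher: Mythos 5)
Your proof is correct: rewriting the AVE as $|x|=Ax-b\leq |A|\,|x|-b$, rearranging to $(I-|A|)\,|x|\leq -b$, and multiplying by $(I-|A|)^{-1}=\sum_{k\geq 0}|A|^k\geq 0$ (nonnegative by the Neumann series, thanks to $\rho(|A|)<1$) is exactly the argument behind this bound; the paper itself only cites the result, and the original proof proceeds the same way. You also correctly identify the one delicate point, namely that nonnegativity of the inverse is what makes the final multiplication inequality-preserving, and your consistency remark matches the paper's use of this bound as an unsolvability test (Proposition~\ref{thmUnsolvHla}).
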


Naturally, each solution $x$ must satisfy $|x|\leq Ax-b$. Together with the above bound, we obtain a polyhedral outer approximation of~$\Sigma$ in the form of a system of linear inequalities.

\begin{corollary}
If $\rho(|A|)<1$, then each solution $x$ of the AVE satisfies the system
\begin{align*}
(A+I_n)x\geq b,\ 
(A-I_n)x\geq b,\ 
 (I-|A|)^{-1} b\leq x\leq -(I-|A|)^{-1} b.
\end{align*}
\end{corollary}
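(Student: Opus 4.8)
The plan is to derive the three stated inequalities from results already established in the excerpt. The corollary asserts that any solution $x$ of the AVE, under the hypothesis $\rho(|A|)<1$, satisfies a polyhedral system. I would treat the three pieces separately, since they come from two different sources. The first two inequalities, namely $(A+I_n)x\geq b$ and $(A-I_n)x\geq b$, are purely algebraic consequences of the AVE itself and require no spectral hypothesis. The two-sided box bound on $x$ is exactly the content of Theorem~\ref{propBoundEqAveA}, and this is the only place where $\rho(|A|)<1$ is needed.

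First I would handle the linear inequalities. Starting from $Ax-|x|=b$, I use the elementary fact that $|x|\geq x$ and $|x|\geq -x$ componentwise, which holds for every real vector. Substituting $|x|=Ax-b$ gives $Ax-b\geq x$ and $Ax-b\geq -x$, that is, $(A-I_n)x\geq b$ and $(A+I_n)x\geq b$. This is precisely the relaxation of the absolute value used in the primal LP~\eqref{primal1} and noted in Section~\ref{ssOptReform} via the observation that the inequality $Ax-|x|\geq b$ splits into these two linear inequalities. No eigenvalue condition enters here.

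Next I would invoke Theorem~\ref{propBoundEqAveA} directly. Under the hypothesis $\rho(|A|)<1$, that theorem gives $|x|\leq -(I-|A|)^{-1}b$ for every solution $x$. The matrix $I-|A|$ is an M-matrix (since $\rho(|A|)<1$), so its inverse is entrywise nonnegative and the right-hand side is a well-defined vector; writing $c\coloneqq -(I-|A|)^{-1}b$, the componentwise inequality $|x|\leq c$ is equivalent to the two-sided bound $-c\leq x\leq c$, which is exactly $(I-|A|)^{-1}b\leq x\leq -(I-|A|)^{-1}b$. Collecting the two linear inequalities from the previous step together with this box bound yields the claimed system.

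There is no serious obstacle here, as the corollary is essentially a bookkeeping combination of Theorem~\ref{propBoundEqAveA} with the trivial relaxation $|x|\geq\pm x$; the only point requiring a word of care is the passage from the single vector inequality $|x|\leq c$ to the two-sided form, which relies on interpreting the absolute value and all comparisons componentwise as fixed in the Nomenclature. The text following Theorem~\ref{propBoundEqAveA} already flags this combination (``Naturally, each solution $x$ must satisfy $|x|\leq Ax-b$''), so the proof is short and the emphasis is on assembling a polyhedral outer approximation of~$\Sigma$ rather than on any new estimate.
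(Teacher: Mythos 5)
Your proof is correct and follows essentially the same route as the paper: the two linear inequalities come from relaxing $Ax-|x|=b$ to $Ax-|x|\geq b$ via $|x|\geq\pm x$, and the box bound is a direct application of Theorem~\ref{propBoundEqAveA} combined with the componentwise equivalence $|x|\leq c \Leftrightarrow -c\leq x\leq c$. The paper treats this corollary exactly as such a bookkeeping combination (its only stated justification is the sentence you quote), so there is nothing to add.
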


%%%%%%%%%%%%%%%%
	%\paragraph{Unique solvability.}
\subsubsection{Unique Solvability} \label{UniqueSolve}

As in other mathematical problems, the uniqueness of a solution is an important issue. It is even more important in view of how many problems can be formulated as AVE, particularly the equivalence between the AVE and the LCP.
%Because the goal of the aforementioned numerical techniques is to get the AVE's unique solvability, the study of unique solvability is an important aspect of theoretical research of the AVE. Various theoretical findings were offered to assure the AVE's unique solvability.
	
%%%
\paragraph{Characterization of Unique Solvability.}	
We present the characterization of Wu and Li~\cite{wu2018unique}  on unique solvability for an arbitrary right-hand side. Notice that the result can also be inferred from \cite{rohn2012rump,Rum2003}, and the sufficiency of the condition was already presented in \cite{rohn2004theorem,zhang2009global}. 
Recall that an interval matrix is regular if it contains only nonsingular matrices. 
	
\begin{theorem}\label{thmAveUnSol}
The AVE has a unique solution for each $b\in\R^n$ if and only if $[A-I_n,A+I_n]$ is regular.
\end{theorem}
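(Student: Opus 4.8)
The plan is to connect the AVE's unique solvability to the regularity of the interval matrix $[A-I_n,A+I_n]$ through the orthant decomposition already developed in Section~\ref{SolutionSet}. Recall that in the orthant determined by a sign vector $s\in\{\pm1\}^n$, the AVE reduces to the linear system $(A-\diag(s))x=b$ subject to $\diag(s)x\geq0$. The key observation is that the matrices $A-\diag(s)$, ranging over all $s\in\{\pm1\}^n$, are precisely the vertex matrices of the interval matrix $[A-I_n,A+I_n]$, since $\diag(s)$ ranges over all $\pm1$ diagonal matrices and thus $A-\diag(s)$ attains every combination of $A_{ij}\pm\delta_{ij}$. I would first make this identification explicit.

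For the \emph{sufficiency} direction, assume $[A-I_n,A+I_n]$ is regular. I would argue existence and uniqueness separately. Uniqueness is the easier half: if $x_1\neq x_2$ were two solutions, subtracting the AVE gives $A(x_1-x_2)=|x_1|-|x_2|$; writing $|x_i|=D(x_i)x_i$ and exploiting that $|x_1|-|x_2|=\diag(s)(x_1-x_2)$ for a suitable sign pattern $s$ (using the fact that $\big||x_1|-|x_2|\big|\leq|x_1-x_2|$ componentwise, so each component of the difference of absolute values equals $\pm$ the corresponding component of $x_1-x_2$), one obtains $(A-\diag(s))(x_1-x_2)=0$ with $A-\diag(s)$ a member of the regular interval matrix, hence nonsingular, forcing $x_1=x_2$. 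The existence half is where I expect the main obstacle: regularity only guarantees that each orthant subsystem has a \emph{candidate} solution $x_s=(A-\diag(s))^{-1}b$, but there is no a~priori reason the sign condition $\diag(s)x_s\geq0$ holds for any particular $s$. The standard route is a degree-theoretic or homotopy argument: consider the piecewise-linear map $F(x)=Ax-|x|$ and show it is a global homeomorphism. Its ``Jacobian'' on the interior of each orthant is the nonsingular matrix $A-\diag(s)$, and regularity of the interval hull guarantees all these pieces have determinants of the same sign (an interval matrix is regular iff all vertex determinants share a common sign, a classical fact I would invoke), which makes $F$ a bijection of $\R^n$ onto itself; surjectivity then yields a solution for every $b$.

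For the \emph{necessity} direction, I would prove the contrapositive: if $[A-I_n,A+I_n]$ is singular (not regular), then some $b$ fails to have a unique solution. Singularity gives a sign vector $s$ with $A-\diag(s)$ singular, so there is $0\neq d\in\R^n$ with $(A-\diag(s))d=0$. The delicate point is converting this kernel vector into an actual failure of unique solvability of the AVE, because $d$ may not lie in the orthant prescribed by $s$. I would handle this by choosing $b$ appropriately: either exhibit an orthant subsystem that is consistent but rank-deficient (yielding infinitely many solutions, hence nonuniqueness), or use the homeomorphism characterization in reverse—if $F$ is a global bijection then all vertex determinants must share a sign, so a sign change among them (forced by singularity, via an intermediate-value argument along the determinant as a function of the vertices) produces a value of $b$ with either zero or multiple preimages.

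The main obstacle, as flagged, is the existence/surjectivity argument in the sufficiency direction: relating regularity of the interval matrix to the global-homeomorphism property of the piecewise-linear map $F$. The cleanest tool is the characterization of interval regularity via the constant sign of vertex determinants together with a result (essentially the Kuhn--Tucker/piecewise-linear degree theory, or the known equivalence that such a map is bijective precisely when its pieces are ``coherently oriented'') that upgrades coherent orientation to global invertibility. If a self-contained argument is preferred, I would instead route through the already-established AVE--LCP equivalence and apply the theory of P-matrices (which characterizes unique LCP solvability), translating the interval-regularity condition into the P-property of the associated LCP matrix.
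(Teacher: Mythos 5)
A preliminary remark: the paper does not prove Theorem~\ref{thmAveUnSol} at all --- it quotes the result from Wu and Li \cite{wu2018unique}, noting it can also be inferred from \cite{rohn2012rump,Rum2003} --- so your proposal can only be measured against the standard arguments in that literature. Your overall architecture (orthant decomposition, the piecewise-linear map $F(x)=Ax-|x|$ with pieces $A-\diag(s)$, a degree/properness argument for existence, and the LCP/P-matrix route as a fallback) is indeed the standard one and can be completed. But two of your concrete steps fail as written. In the uniqueness half you assert that $|x_1|-|x_2|=\diag(s)(x_1-x_2)$ for some $s\in\{\pm1\}^n$. This is false: if $(x_1)_i=1$ and $(x_2)_i=-1$, then the $i$th component of $|x_1|-|x_2|$ is $0$ while that of $x_1-x_2$ is $2$, so the multiplier must be $0$, not $\pm1$. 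The reverse triangle inequality only yields $|x_1|-|x_2|=D(x_1-x_2)$ with $D$ diagonal and $|D|\leq I_n$. The slip is harmless \emph{only} because regularity of $[A-I_n,A+I_n]$ makes every member nonsingular, not just the $2^n$ vertex matrices; with $A-D$ in place of $A-\diag(s)$ the uniqueness argument goes through, and you should state it that way.

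The gap in the necessity direction is more serious. The claim ``singularity gives a sign vector $s$ with $A-\diag(s)$ singular'' is wrong: non-regularity only provides a singular $A-D$ with $D$ diagonal, $|D|\leq I_n$, and this matrix can lie strictly inside the interval matrix. For $n=1$ and $A=(1/2)$, the interval $[-1/2,\,3/2]$ contains the singular matrix $0$ while both vertices $-1/2$ and $3/2$ are nonsingular. Your first repair (a consistent, rank-deficient orthant subsystem has infinitely many solutions) also fails as stated, because kernel directions of $A-\diag(s)$ need not lie in the orthant $\diag(s)x\geq0$, so rank deficiency of the linear piece does not by itself produce multiple AVE solutions. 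Your second repair (determinant sign changes plus an intermediate-value argument) can be completed, but a cleaner, fully constructive argument avoids vertices entirely: from $(A-D)d=0$ with $d\neq0$, $|D|\leq I_n$, we get $Ad=Dd$ with $|(Dd)_i|\leq|d_i|$; since the scalar function $t\mapsto|t|-|t-d_i|$ attains every value in $[-|d_i|,|d_i|]$, one can pick $u_i$ between $0$ and $d_i$ with $|u_i|-|u_i-d_i|=(Dd)_i$ for each $i$. Then $Au-|u|=A(u-d)-|u-d|$, so $u$ and $v\coloneqq u-d\neq u$ both solve the AVE with $b\coloneqq Au-|u|$, and uniqueness fails. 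Finally, for the existence half (your flagged ``main obstacle''), you can bypass piecewise-linear orientation theory altogether: use the homotopy $F_t(x)=Ax-t|x|$, $t\in[0,1]$; regularity gives $c\coloneqq\min_{|D|\leq I_n}\sigma_{\min}(A-D)>0$ by compactness, hence $\|F_t(x)\|\geq c\|x\|$ uniformly, so the topological degree of $F_t$ on a large ball at $b$ is constant in $t$ and equals $\sgn\det A\neq0$ at $t=0$, yielding a solution of $F_1(x)=b$. With these repairs your plan becomes a correct proof.
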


A survey on regularity of interval matrices can be found in Rohn~\cite{Roh2009}. The conditions presented there enable us to characterize the unique solvability of the AVE by other means. We mention some of the most distinctive ones.

\begin{corollary}
The following conditions are equivalent:
\begin{enumerate}[(i)]
\item 
The AVE has a unique solution for each $b\in\R^n$;
\item
$[A-I_n,A+I_n]$ is regular;
\item
the system $|Ax|\leq|x|$ has only the trivial solution $x=0$;
\item
for each $s\in\{\pm1\}^n$, the linear system
$$
(A-\diag(s))x^1-(A+\diag(s))x^2=s,\ x^1,x^2\geq0
$$
is solvable;
\item
$\det(A+\diag(s))$ is constantly positive or negative for each $s\in\{\pm1\}^n$;
\item
%each matrix in the form $A+\diag(z)$ is nonsingular, where $z_i\in[-1,1]$ for some $i\in\{1,\ldots,n\}$ and $|z_j|=1$ for $j\not=i$.
each matrix in the form 
$$A+\diag(z^{(i)}),\quad i=1,\ldots,n,$$
is nonsingular, where $z^{(i)}_i\in[-1,1]$ and $|z^{(i)}_j|=1$ for $j\not=i$;
\item
%$A$ is nonsingular and for each $s\in\{\pm1\}^n$ and each real eigenvalue $\lambda$ of $A^{-1}\diag(s)$ we have $|\lambda|<1$.
for each $s\in\{\pm1\}^n$ and each real eigenvalue $\lambda$ of $\diag(s)A$ we have $|\lambda|>1$.
\end{enumerate}
\end{corollary}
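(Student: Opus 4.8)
The plan is to keep (i)$\Leftrightarrow$(ii) as given by Theorem~\ref{thmAveUnSol} and to show that each of (iii)--(vii) is equivalent to (ii), i.e.\ to the regularity of the interval matrix $\mathbf{A}=[A-I_n,A+I_n]$. The point is that this matrix has center $A$ and radius $I_n$, so only its diagonal entries vary and
\[
\mathbf{A}=\{A+\diag(d)\mmid d\in[-1,1]^n\}.
\]
Under this description the geometric objects in the statement become transparent: the vertices of $\mathbf{A}$ are the matrices $A+\diag(s)$ with $s\in\{\pm1\}^n$ (condition (v)), its edges are the matrices $A+\diag(z^{(i)})$ of condition (vi), and the main diagonals of the box $[-1,1]^n$ are the segments $d=ts$, $t\in[-1,1]$, giving the matrices $A+\diag(ts)$ relevant to (vii). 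I would treat the determinant-based conditions (v), (vi), (vii) by one elementary tool, and the remaining two, (iii) and (iv), by the standard interval criteria specialized to radius $I_n$.

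For (v), (vi), (vii) the key observation is that $d\mapsto\det(A+\diag(d))$ is \emph{multilinear}: the perturbations $d_1,\dots,d_n$ occupy distinct diagonal positions, so the determinant is affine in each $d_i$ separately. A multilinear function on a box attains its extrema at vertices and has a connected (interval) range, so $\mathbf{A}$ is regular iff $0$ lies outside that range iff all vertex determinants $\det(A+\diag(s))$ share one sign; this is exactly (v). Restricting to an edge, the determinant is affine in the single free coordinate, hence nonvanishing there iff its two endpoint (vertex) values have equal sign; together with the connectivity of the hypercube graph on $\{\pm1\}^n$ this yields (v)$\Leftrightarrow$(vi). For (vii) I would use that $\lambda$ is a real eigenvalue of $\diag(s)A$ iff $\det(A-\lambda\diag(s))=\det(A+\diag(-\lambda s))=0$, so ``no eigenvalue in $[-1,1]$'' means the determinant does not vanish along the main diagonal in direction $s$. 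The crucial point is that every such diagonal passes through the common center $d=0$, where the value is $\det A\neq0$ (as $0$ is not an eigenvalue); hence (vii) forces $\det(A+\diag(s))$ to have the sign of $\det A$ for every $s$, which is (v). The reverse implication (ii)$\Rightarrow$(vii) is immediate, since the diagonal matrices lie inside $\mathbf{A}$.

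Condition (iii) is the Oettli--Prager characterization of regularity: for fixed $x$ the set $\{Mx\mmid M\in\mathbf{A}\}$ equals $\{y\mmid |Ax-y|\le|x|\}$, so some $M\in\mathbf{A}$ is singular iff $|Ax|\le|x|$ for some $x\neq0$; negating this gives (iii). For (iv) I would first perform the elementary substitution $x=x^1-x^2$, $w=x^1+x^2$ (so that $x^1,x^2\geq0$ is equivalent to $w\geq|x|$): eliminating $w$ through the equation turns the system of (iv) into the requirement that, for each $s\in\{\pm1\}^n$, there exists $x$ with $\diag(s)Ax\geq|x|+e$. This is precisely Rohn's solvability criterion for regularity (see the survey~\cite{Roh2009}) specialized to radius $I_n$, and I would invoke it at that point.

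The main obstacle is condition (iv). The determinant conditions (v), (vi), (vii) all reduce to the single multilinearity/common-center observation above and are self-contained, and (iii) follows from a standard one-line interval fact; but the equivalence of (iv) to regularity is genuinely the deep direction. A naive attempt---solving $\diag(s)(A-I_n)x=e$ inside the orthant of sign~$s$---fails because the solution $x=(A-I_n)^{-1}s$ need not lie in that orthant, so the required strictly feasible vector cannot be produced by hand; one really needs the strong-solvability theory of regular interval matrices (LP duality / Farkas applied across all $2^n$ orthant systems). I would therefore present (iv) as the specialization of the cited Rohn criterion rather than reprove it from scratch.
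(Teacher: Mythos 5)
Your proposal is correct, and it is genuinely more self-contained than what the paper does. The paper gives no proof at all: it establishes (i)$\Leftrightarrow$(ii) by Theorem~\ref{thmAveUnSol} and then simply asserts that (iii)--(vii) are among the known regularity characterizations catalogued in Rohn's survey~\cite{Roh2009}, specialized to the interval matrix $[A-I_n,A+I_n]$. You instead prove four of the five remaining equivalences from first principles, and the arguments hold up: the Oettli--Prager one-liner for (iii) is exactly right because the radius is $I_n$, so $\{Mx \mid M\in[A-I_n,A+I_n]\}=\{y \mid |Ax-y|\leq|x|\}$; the multi-affinity of $d\mapsto\det(A+\diag(d))$ (the $d_i$ occupy distinct diagonal entries) legitimately yields extrema at vertices and a connected range, giving (ii)$\Leftrightarrow$(v); edge-affinity plus connectivity of the hypercube graph gives (v)$\Leftrightarrow$(vi); and the observation that every segment $\{A+\diag(ts)\mid t\in[-1,1]\}$ passes through $A$ with $\det A\neq0$ (forced by (vii) itself, since $\lambda=0$ is excluded) correctly converts the eigenvalue condition (vii) into the common-sign condition (v). Your reduction of (iv) via $x=x^1-x^2$, $w=x^1+x^2$ to the solvability of $\diag(s)Ax\geq|x|+e$ is also correct, and your judgment that the direction (iv)$\Rightarrow$(ii) genuinely requires the LP-duality machinery behind Rohn's criterion is fair---citing it there is precisely what the paper does globally. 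One small improvement: the easy direction (ii)$\Rightarrow$(iv) needs no citation beyond the paper itself. Theorem~\ref{thmalternatives} with $D=I_n$ and $B=\diag(s)$ gives a solution of $Ax-\diag(s)|x|=s$; writing $x^1=\frac{1}{2}(|x|+x)\geq0$, $x^2=\frac{1}{2}(|x|-x)\geq0$ then solves the system in (iv) exactly. In sum, what your route buys is a verifiable, elementary proof of (iii), (v), (vi), (vii) in place of a pure citation; its only reliance on external theory, for (iv), is unavoidable in either treatment.
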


Other characterizing conditions of unique solvability were presented in~\cite{KumDee2023u}.

Recall that the AVE is equivalent to the LCP. In the LCP, the matrix class providing unique solvability for each right-hand side is called \emph{P-matrices};  these matrices are defined as having all positive principal minors. A direct link between P-matrices and interval matrices was established in, for example,~\cite{rohn2012rump}.
According to Coxson~\cite{Cox1994}, verifying the P-matrix property is a co-NP-hard problem. Thus, we immediately have:

\begin{corollary}\label{corAveUnSolNp}
It is co-NP-hard to check if the AVE has a unique solution for each $b\in\R^n$.
\end{corollary}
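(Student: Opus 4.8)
The plan is to derive the co-NP-hardness of unique AVE solvability directly from the chain of equivalences already assembled in the excerpt, so that the problem inherits intractability from a known hard matrix-recognition problem. The key observation is that Theorem~\ref{thmAveUnSol} equates unique solvability of the AVE (for every right-hand side) with regularity of the interval matrix $[A-I_n,A+I_n]$, and the surrounding discussion links this interval-regularity condition to the P-matrix property via the AVE--LCP correspondence. Since Coxson~\cite{Cox1994} established that recognizing P-matrices is co-NP-hard, the strategy is to exhibit a polynomial-time reduction from P-matrix recognition to the question ``does the AVE have a unique solution for each $b$?''

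First I would make the reduction explicit. Given an arbitrary matrix $Q\in\R^{n\times n}$ whose P-matrix status we wish to decide, I would use the direct link between P-matrices and interval matrices noted in the excerpt (see~\cite{rohn2012rump}): $Q$ is a P-matrix precisely when a suitably constructed interval matrix, built from $Q$ in a standard and polynomial-time manner, is regular. Concretely, one recovers a matrix $A$ such that $[A-I_n,A+I_n]$ is regular if and only if $Q$ is a P-matrix; in the symmetric-radius setting this amounts to setting the interval center and radius so that the midpoint--radius description $[\Mid{M}-\Rad{M},\Mid{M}+\Rad{M}]$ matches $[A-I_n,A+I_n]$ with $\Rad{M}=I_n$. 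Having produced $A$ in polynomial time, I would invoke Theorem~\ref{thmAveUnSol} to conclude that the AVE with matrix $A$ has a unique solution for every $b\in\R^n$ if and only if $Q$ is a P-matrix.

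The conclusion then follows immediately: an algorithm deciding unique solvability of the AVE in polynomial time would decide the P-matrix property in polynomial time, contradicting Coxson's co-NP-hardness result unless $\text{P}=\text{NP}$. Hence checking unique AVE solvability for all $b$ is co-NP-hard.

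The main obstacle I anticipate is producing a clean, fully polynomial-time correspondence between the matrix $Q$ (the P-matrix instance) and the AVE matrix $A$, together with a careful verification that regularity of $[A-I_n,A+I_n]$ is genuinely equivalent to $Q$ being a P-matrix under this encoding. The fixed radius $\Rad{M}=I_n$ forced by the form $[A-I_n,A+I_n]$ is slightly restrictive, so I would rely on the scaling freedom already exploited in the LCP$\to$AVE reduction (the assumption that $Q-I_n$ is nonsingular ``does not affect generality since we can scale matrix $Q$ by any positive scalar'') to normalize a general P-matrix instance into the required interval-radius form. Verifying that this normalization preserves the P-matrix property and stays polynomial-time is the delicate bookkeeping step; everything else is a routine appeal to the stated equivalences.
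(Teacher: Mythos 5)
Your proposal is correct and follows essentially the same route as the paper: the paper obtains Corollary~\ref{corAveUnSolNp} by combining Theorem~\ref{thmAveUnSol} (unique solvability for every $b$ $\Leftrightarrow$ regularity of $[A-I_n,A+I_n]$) with the AVE--LCP correspondence, the P-matrix characterization of unique LCP solvability and its link to interval regularity~\cite{rohn2012rump}, and Coxson's co-NP-hardness result~\cite{Cox1994}. The only difference is that you make explicit the polynomial-time reduction (including the positive scaling needed so the Cayley-type transform is well defined), which the paper leaves implicit in its ``thus, we immediately have'' step.
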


Even though checking the regularity of $[A-I_n,A+I_n]$ is intractable in general, there are some easily recognizable classes. For instance, inverse nonnegative matrices are addressed in Section~\ref{sInvNonneg} or symmetric matrices~\cite{Hla2023a}.

\begin{proposition}
Let $A\in\R^{n\times n}$ be symmetric. Then $[A-I_n,A+I_n]$ is regular if and only if both matrices $A-I_n$ and $A+I_n$ have the same signature.
\end{proposition}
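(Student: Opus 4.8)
The plan is to exploit the very special structure of the interval matrix $[A-I_n,A+I_n]$: its center is $A$ and its radius is $I_n$, so it consists precisely of the matrices $A+\diag(d)$ with $d\in[-1,1]^n$. Since $A$ is symmetric and only the diagonal is perturbed, every matrix in the interval stays symmetric and hence has real eigenvalues. First I would fix this description and order the eigenvalues of each $A+\diag(d)$ increasingly as $\lambda_1(d)\le\cdots\le\lambda_n(d)$. Regularity of the interval is then equivalent to the statement that $0$ is never one of these eigenvalues, i.e. $\lambda_k(d)\neq0$ for all $k$ and all $d\in[-1,1]^n$.

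Next I would bring in two standard facts about symmetric matrices. The first is Weyl monotonicity: if $d\le d'$ componentwise then $A+\diag(d)\preceq A+\diag(d')$ in the Loewner order, so each $\lambda_k(\cdot)$ is nondecreasing on the box; together with continuity of eigenvalues this makes the range of $\lambda_k$ over $[-1,1]^n$ exactly the interval $[\lambda_k(A-I_n),\lambda_k(A+I_n)]$, with minimum at $d=-e$ and maximum at $d=e$. The second fact, which makes everything collapse, is that $A+I_n=(A-I_n)+2I_n$, so adding a scalar multiple of the identity shifts every eigenvalue by the same amount: writing $\mu_k\coloneqq\lambda_k(A-I_n)$ we get $\lambda_k(A+I_n)=\mu_k+2$, and hence the range of $\lambda_k$ is the length-two interval $[\mu_k,\mu_k+2]$.

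From here the equivalence is a short sign count. By the intermediate value theorem along the segment $d(t)=(2t-1)e$, the interval contains a singular matrix iff $0\in[\mu_k,\mu_k+2]$ for some $k$, i.e. iff some $\mu_k$ lies in $[-2,0]$; conversely, monotonicity shows that any singular $A+\diag(d^*)$ forces such a $\mu_k$. On the signature side, since $\lambda_k(A+I_n)=\mu_k+2$, the contribution of index $k$ to $\operatorname{sig}(A+I_n)-\operatorname{sig}(A-I_n)$ is $\sgn(\mu_k+2)-\sgn(\mu_k)\ge0$, and this term vanishes exactly when $\mu_k>0$ or $\mu_k<-2$. Thus the two signatures coincide iff no $\mu_k\in[-2,0]$, which is the very condition for regularity; this proves both implications simultaneously and, pleasantly, shows the equivalence is insensitive to whether "signature" is read as the full inertia or merely as $n_+-n_-$.

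The routine parts are the two cited facts (Weyl monotonicity and the identity shift of the spectrum). The only place needing a little care is showing that the interval genuinely realizes a singular matrix when $0\in[\mu_k,\mu_k+2]$, rather than merely that $0$ sits between the two endpoint eigenvalues; this is precisely where continuity of $\lambda_k$ along a path inside the box, via the intermediate value theorem, is invoked. I expect this continuity/IVT step to be the main (and essentially the only) obstacle, everything else being bookkeeping about signs.
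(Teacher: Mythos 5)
Your proof is correct. Note that the paper itself does not prove this proposition --- it only states it with a citation to the literature --- so there is no in-paper argument to compare against; judged on its own merits, your argument is a complete and self-contained proof, and it is the natural one for this statement: the interval $[A-I_n,A+I_n]$ consists exactly of the symmetric matrices $A+\diag(d)$, $d\in[-1,1]^n$; Weyl monotonicity plus continuity shows the range of the $k$th ordered eigenvalue over the box is exactly $[\mu_k,\mu_k+2]$ with $\mu_k=\lambda_k(A-I_n)$; and regularity as well as equality of signatures then both reduce to the condition that no $\mu_k$ lies in $[-2,0]$. Your observation that the conclusion is insensitive to reading ``signature'' as inertia or as $n_+-n_-$ is also right, since the shift by $2I_n$ can only move eigenvalues upward, forcing the componentwise equalities $n_+(A+I_n)=n_+(A-I_n)$ and $n_-(A+I_n)=n_-(A-I_n)$ once the differences cancel.

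One small caution if this were to be merged into the paper's text: the paper's nomenclature defines $\sgn(0)=-1$, whereas your sign-counting step implicitly uses $\sgn(0)=0$. With the paper's convention, the boundary case $\mu_k=-2$ (i.e., $A+I_n$ singular) would contribute a zero term and your bookkeeping identity would misreport equal signatures there; with the standard convention $\sgn(0)=0$ (equivalently, counting via inertia) every case, including the singular endpoints, is handled correctly, exactly as you wrote. So the proof stands, but the $\sgn$ symbol should either be defined locally or replaced by explicit counts of positive and negative eigenvalues.
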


%%%
\paragraph{Sufficient Conditions.}
Due to the intractability of testing unique solvability, it is worth considering sufficient conditions. 

\begin{theorem}
Either of the following two conditions is sufficient for the unique solvability of the AVE for any $b\in\R^n$:
\begin{align}
\label{condRegSuff1}
\rho(|A^{-1}|)&<1,\\
\label{condRegSuff2}
\sigma_{\min}(A)&>1.
\end{align}
\end{theorem}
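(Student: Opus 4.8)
The plan is to reduce both implications to the characterization of unique solvability in Theorem~\ref{thmAveUnSol}: the AVE is uniquely solvable for every $b\in\R^n$ precisely when the interval matrix $[A-I_n,A+I_n]$ is regular. First I would unpack what membership in this interval means. Since the radius matrix is $I_n$, a matrix $M$ lies in $[A-I_n,A+I_n]$ iff $|M-A|\le I_n$ componentwise, which forces $M$ to agree with $A$ off the diagonal and to differ from $A$ on the diagonal by at most $1$ in modulus. Hence $[A-I_n,A+I_n]=\{A+\diag(d)\mmid d\in[-1,1]^n\}$, and regularity amounts to showing that $A+\diag(d)$ is nonsingular for every $d\in[-1,1]^n$. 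Each of the two conditions will be shown to guarantee exactly this.

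For condition \eqref{condRegSuff2}, $\sigma_{\min}(A)>1$, I would argue directly. For any unit vector $x$ and any $d\in[-1,1]^n$, the reverse triangle inequality gives $\|(A+\diag(d))x\|\ge\|Ax\|-\|\diag(d)x\|$. The first term is at least $\sigma_{\min}(A)$ by the definition of the minimum singular value, while $\|\diag(d)x\|\le\|\diag(d)\|\le1$ because the spectral norm of a diagonal matrix equals its largest entry in modulus. Thus $\|(A+\diag(d))x\|\ge\sigma_{\min}(A)-1>0$, so $A+\diag(d)$ has trivial kernel and is nonsingular. Since $d$ was arbitrary, the interval is regular.

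For condition \eqref{condRegSuff1}, $\rho(|A^{-1}|)<1$, I would first note that this forces $A$ to be nonsingular, and then factor $A+\diag(d)=A\,(I_n+A^{-1}\diag(d))$; nonsingularity of the product reduces to that of the second factor. The key step is to bound the spectral radius of the perturbation: using $\rho(M)\le\rho(|M|)$ together with the Perron--Frobenius monotonicity $0\le P\le Q\Rightarrow\rho(P)\le\rho(Q)$, and the componentwise estimate $|A^{-1}\diag(d)|=|A^{-1}|\,\diag(|d|)\le|A^{-1}|$ (valid since $|d|\le e$), I obtain $\rho(A^{-1}\diag(d))\le\rho(|A^{-1}|)<1$. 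Consequently $-1$ cannot be an eigenvalue of $A^{-1}\diag(d)$, so $I_n+A^{-1}\diag(d)$ is nonsingular and hence so is $A+\diag(d)$ for every $d\in[-1,1]^n$.

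The two arguments are structurally parallel---both reduce to nonsingularity of $A+\diag(d)$ over the cube $d\in[-1,1]^n$---but they exploit different norms, so neither condition subsumes the other. The only genuinely delicate point is the spectral-radius estimate in the first case: one must invoke both the comparison $\rho(M)\le\rho(|M|)$ and the monotonicity of the Perron root under componentwise domination of nonnegative matrices, and be careful that $\diag(|d|)\le I_n$ is used componentwise rather than in any norm sense. The singular-value argument for the second condition is comparatively routine.
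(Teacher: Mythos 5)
Your proof is correct and follows essentially the paper's route: the paper presents both conditions as consequences of the Wu--Shen criterion (Theorem~\ref{thmAveUnSolSuff}, $\rho(A^{-1}D)<1$ for every $D\in[-I_n,I_n]$), which is itself the regularity characterization of Theorem~\ref{thmAveUnSol} combined with the very factorization $A+\diag(d)=A(I_n+A^{-1}\diag(d))$ you employ, and your Perron--Frobenius monotonicity estimate for condition~\eqref{condRegSuff1} is exactly the argument implicit there. The only divergence is minor and harmless: for condition~\eqref{condRegSuff2} you give a direct reverse-triangle-inequality bound $\|(A+\diag(d))x\|\geq\sigma_{\min}(A)-1>0$, avoiding $A^{-1}$ altogether, where the paper's chain is $\rho(A^{-1}D)\leq\|A^{-1}D\|\leq\|A^{-1}\|\cdot\|D\|\leq\|A^{-1}\|<1$.
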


Condition \eqref{condRegSuff1} was stated in Rohn et al.~\cite{rohn2014iterative}, and the unique solution of the AVE can be computed in polynomial time then~\cite{zamani2021new}.  Condition \eqref{condRegSuff2}, which is often equivalently stated as $\| A^{-1}\|<1$, was proposed by Mangasarian and Meyer~\cite{mangasarian2006absolute}, and also in this case the unique solution of the AVE is polynomially computable. Nevertheless, it is an open problem if the AVE is efficiently solvable in the general case with $[A-I_n,A+I_n]$ regular; cf.~\cite{GharGil2012}.

Conditions \eqref{condRegSuff1} and \eqref{condRegSuff2} are independent of each other; that is, no one implies the second one~\cite{rohn2014iterative}. 
Both conditions are consequences of the following more general sufficient condition~\cite{wu2020unique}.

\begin{theorem}\label{thmAveUnSolSuff}
The AVE has a unique solution for each $b\in\R^n$ if $\rho(A^{-1}D)<1$ for every $D\in[-I_n,I_n]$.
\end{theorem}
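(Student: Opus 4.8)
The plan is to reduce the claim to the regularity characterization of unique solvability already established in Theorem~\ref{thmAveUnSol}, and then to translate the spectral-radius hypothesis into a statement about singularity of the matrices lying in the interval $[A-I_n,A+I_n]$.

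First I would unpack the interval matrix. Since the radius of $[A-I_n,A+I_n]$ is exactly $I_n$, which is diagonal, a matrix $M$ lies in this interval if and only if $M=A+D$ for some diagonal $D$ whose diagonal entries lie in $[-1,1]$; that is, $M=A+D$ with $D\in[-I_n,I_n]$. Thus the index set $D\in[-I_n,I_n]$ appearing in the hypothesis is precisely the set that parametrizes the members of $[A-I_n,A+I_n]$. I would also note that the hypothesis tacitly requires $A$ to be nonsingular, since $A^{-1}$ occurs; equivalently, taking $D=0$ shows that $A$ itself must be regular.

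By Theorem~\ref{thmAveUnSol}, it therefore suffices to prove that every $A+D$ with $D\in[-I_n,I_n]$ is nonsingular. I would argue by contraposition. Suppose some $A+D_0$ is singular, so that $(A+D_0)x=0$ for a nonzero $x\in\R^n$. Multiplying by $A^{-1}$ gives $A^{-1}D_0 x=-x$, so $-1$ is a (real) eigenvalue of $A^{-1}D_0$, whence $\rho(A^{-1}D_0)\geq1$. This contradicts the hypothesis $\rho(A^{-1}D)<1$ for all $D\in[-I_n,I_n]$, since $D_0$ belongs to this set. Hence every member of $[A-I_n,A+I_n]$ is nonsingular, the interval matrix is regular, and the conclusion follows.

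The argument is short, and the only real obstacle is conceptual rather than computational: recognizing that the ``radius $=I_n$'' structure forces the perturbations to be exactly diagonal, so that $[A-I_n,A+I_n]$ is parametrized by the same $D\in[-I_n,I_n]$ used in the hypothesis, and that singularity of a real matrix genuinely produces the real eigenvalue $-1$ (not merely a complex eigenvalue of modulus at least $1$). Once this correspondence is in place, the eigenvalue computation is immediate. I do not expect to need any contraction or fixed-point machinery, nor the explicit conditions \eqref{condRegSuff1}--\eqref{condRegSuff2}; those should follow from this theorem rather than feed into it.
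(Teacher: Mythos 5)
Your proof is correct and follows essentially the route the paper itself indicates: it reduces the claim to the characterization of unique solvability via regularity of $[A-I_n,A+I_n]$ (Theorem~\ref{thmAveUnSol}, equivalently nonsingularity of $A+D$ for every $D\in[-I_n,I_n]$, as in Theorem~\ref{thmgg}), and then rules out singularity by the eigenvalue argument $A^{-1}D_0x=-x\Rightarrow\rho(A^{-1}D_0)\geq1$. Your observations that $[-I_n,I_n]$ consists exactly of diagonal matrices with entries in $[-1,1]$ and that the eigenvalue $-1$ is genuinely attained (with a real eigenvector) are exactly the points that make the reduction work.
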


Notice that we cannot weaken condition \eqref{condRegSuff1} to the form $\rho(A^{-1})<1$. For instance, matrix
$$
A=\begin{pmatrix} -1&2 \\ -2&1 \end{pmatrix}
$$
satisfies condition $\rho(A^{-1})<1$, but the interval matrix $[A-I_n,A+I_n]$ is not regular---it contains the singular matrix
$$
\begin{pmatrix} -2&2 \\ -2&2 \end{pmatrix}.
$$

Wu and Guo~\cite{wu2016unique} proposed another sufficient condition. To state it, recall that matrix $M\in\R^{n\times n}$ is an \emph{M-matrix} if $M_{ij}\leq0$ for $i\not=j$ and $M^{-1}\geq0$. A matrix $M\in\R^{n\times n}$ is \emph{H-matrix} if the \emph{comparison matrix} $\langle M\rangle$ is an M-matrix, where the comparison matrix is defined as $\langle M\rangle_{ij}=|M|_{ij}$ if $i=j$ and $\langle M\rangle_{ij}=-|M|_{ij}$ if $i\not=j$. H-matrices thus extend the class of M-matrices.

\begin{proposition}\label{thmAveUnSolSuffHmat}
The AVE has a unique solution for each $b\in\R^n$ if $A-I_n$ is an H-matrix with a positive diagonal.
\end{proposition}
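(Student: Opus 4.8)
The plan is to use the sufficient condition already established in Theorem~\ref{thmAveUnSolSuff}: the AVE has a unique solution for each $b\in\R^n$ whenever $\rho(A^{-1}D)<1$ for every $D\in[-I_n,I_n]$. So it suffices to show that the H-matrix hypothesis on $A-I_n$ (with positive diagonal) implies this spectral-radius bound. First I would recall the standard bound relating spectral radius to the H-matrix structure: if $M=A-I_n$ is an H-matrix with positive diagonal, then one controls $|M^{-1}|$ entrywise by the inverse of the comparison matrix, namely $|M^{-1}|\leq\langle M\rangle^{-1}$, where $\langle M\rangle$ is an M-matrix and hence $\langle M\rangle^{-1}\geq0$. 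This is the key classical inequality I would invoke.

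The core estimate is then to bound $\rho(A^{-1}D)$ using $\rho(|A^{-1}D|)\leq\rho(|A^{-1}|)$, valid since $|D|\leq I_n$ for $D\in[-I_n,I_n]$ and spectral radius is monotone under entrywise domination of nonnegative matrices (so $\rho(A^{-1}D)\leq\rho(|A^{-1}||D|)\leq\rho(|A^{-1}|)$). Thus the whole problem reduces to proving $\rho(|A^{-1}|)<1$. Here I would write $A=(A-I_n)+I_n=M+I_n$ and try to relate $|A^{-1}|=|(M+I_n)^{-1}|$ to the H-matrix data of $M$. The natural route is to observe that $\langle M+I_n\rangle=\langle M\rangle+I_n$, because adding a positive quantity to the already-positive diagonal only increases the diagonal of the comparison matrix while leaving off-diagonal entries untouched; consequently $A=M+I_n$ is itself an H-matrix whose comparison matrix is $\langle M\rangle+I_n$. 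Applying the entrywise bound again gives $|A^{-1}|\leq(\langle M\rangle+I_n)^{-1}$, so it remains to show $\rho\bigl((\langle M\rangle+I_n)^{-1}\bigr)<1$.

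For this last step I would use the eigenvalue structure of M-matrices: $\langle M\rangle$ is an M-matrix with positive diagonal, so all its eigenvalues have positive real part, and in fact for the nonnegative inverse $\langle M\rangle^{-1}$ one has $\rho\bigl((\langle M\rangle+I_n)^{-1}\bigr)=1/\bigl(1+\lambda_{\min}\bigr)$, where $\lambda_{\min}>0$ is the smallest (real, by Perron--Frobenius applied to the associated nonnegative matrix) eigenvalue of $\langle M\rangle$; since $\lambda_{\min}>0$ this quantity is strictly less than $1$. Chaining the inequalities yields $\rho(A^{-1}D)\leq\rho(|A^{-1}|)\leq\rho\bigl((\langle M\rangle+I_n)^{-1}\bigr)<1$ for every $D\in[-I_n,I_n]$, and Theorem~\ref{thmAveUnSolSuff} then delivers unique solvability.

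The main obstacle I anticipate is justifying the entrywise inverse bound $|A^{-1}|\leq\langle A\rangle^{-1}$ cleanly and confirming that the diagonal shift preserves the H-matrix property in the precise form $\langle M+I_n\rangle=\langle M\rangle+I_n$; both are standard facts about H- and M-matrices, but they must be stated carefully since the positivity of the diagonal of $A-I_n$ is exactly what guarantees that the shifted comparison matrix stays an M-matrix and that the spectral bound is strict rather than merely $\leq 1$. An alternative, possibly shorter, route would be to bypass $\rho(|A^{-1}|)$ and instead verify condition \eqref{condRegSuff1} or the regularity of $[A-I_n,A+I_n]$ directly from the H-matrix diagonal-dominance data, but reducing through Theorem~\ref{thmAveUnSolSuff} keeps the argument self-contained within the results already proved.
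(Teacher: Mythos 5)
Your proof is correct. Note, though, that the paper offers no proof of this proposition at all---it is stated as a result imported from Wu and Guo~\cite{wu2016unique}---so there is nothing in-text for your argument to coincide with. Judged on its own, your chain of reasoning is sound: $\rho(A^{-1}D)\leq\rho(|A^{-1}|)$ for every $D\in[-I_n,I_n]$; the identity $\langle M+I_n\rangle=\langle M\rangle+I_n$ for $M=A-I_n$, which is exactly where the positive-diagonal hypothesis enters (and without which it fails); Ostrowski's inequality $|A^{-1}|\leq\langle A\rangle^{-1}$ for H-matrices; and the bound $\rho\bigl((\langle M\rangle+I_n)^{-1}\bigr)<1$, which holds because, writing $\langle M\rangle=sI_n-P$ with $P\geq0$ and $s>\rho(P)$, every eigenvalue $1+s-\mu$ of $\langle M\rangle+I_n$ has modulus at least $1+s-\rho(P)>1$. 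In fact your argument proves more than the proposition asks: it shows the H-matrix hypothesis implies Rohn's condition~\eqref{condRegSuff1}, $\rho(|A^{-1}|)<1$, so the appeal to Theorem~\ref{thmAveUnSolSuff} could be dropped and the proposition read as a corollary of~\eqref{condRegSuff1}. For comparison, the only related argument the paper does spell out is Proposition~\ref{propCayleyALMmat}, which treats the narrower M-matrix case by a genuinely different route, namely the Cayley-like transform $\mathcal{AL}(A)=(A+I_n)^{-1}(A-I_n)$ together with the P-matrix/LCP machinery; your approach is more elementary, stays inside the spectral-radius framework of Section~\ref{UniqueSolve}, and covers the full H-matrix statement. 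Two small repairs: replace ``smallest eigenvalue'' of $\langle M\rangle$ by ``eigenvalue of minimum modulus'' (for an M-matrix these coincide and are real and positive, so the slip is harmless, and the exact equality $\rho\bigl((\langle M\rangle+I_n)^{-1}\bigr)=1/(1+\lambda_{\min})$ is not even needed---the displayed lower bound on the eigenvalue moduli suffices); and state explicitly that $A=M+I_n$ is nonsingular because it is itself an H-matrix, which is part of Ostrowski's theorem and is required before writing $A^{-1}$.
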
	
	
The above sufficient conditions can still be expensive for large and sparse matrices. That is why Wu and Guo~\cite{wu2016unique} proposed a condition based on a certain strictly diagonally dominant matrix property.

\begin{proposition}
The AVE has a unique solution for each $b\in\R^n$ if
\begin{align*}
| a_{ii}| >1+\sum_{i\neq j}| a_{ij}| 
\qquad\forall i=1,2,\dots,n.
\end{align*}
\end{proposition}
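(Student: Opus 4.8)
The plan is to reduce everything to the regularity characterization of Theorem~\ref{thmAveUnSol}, which states that the AVE is uniquely solvable for every $b\in\R^n$ if and only if the interval matrix $[A-I_n,A+I_n]$ is regular. Thus it suffices to show that, under the stated diagonal-dominance hypothesis, every matrix in this interval matrix is nonsingular.

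First I would make the structure of $[A-I_n,A+I_n]$ explicit. Since the radius here is $I_n$, the off-diagonal radii vanish and only the diagonal entries are perturbed; concretely, every member has the form $M=A+\diag(d)$ with $d\in[-1,1]^n$, so that $M_{ij}=a_{ij}$ for $i\neq j$ while $M_{ii}=a_{ii}+d_i$ with $|d_i|\le 1$. In other words, the interval matrix is exactly the set of diagonal perturbations of $A$ within $\pm1$.

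Next I would show that each such $M$ is strictly diagonally dominant. By the reverse triangle inequality and the hypothesis, for every row $i$,
$$
|M_{ii}|=|a_{ii}+d_i|\ge |a_{ii}|-1 > \sum_{j\neq i}|a_{ij}| = \sum_{j\neq i}|M_{ij}|.
$$
Hence $M$ is strictly diagonally dominant, and by the Levy--Desplanques theorem (equivalently, by Gershgorin's theorem, since no Gershgorin disc of $M$ contains the origin) $M$ is nonsingular. As $d$ ranges over all of $[-1,1]^n$, this covers the entire interval matrix, so $[A-I_n,A+I_n]$ is regular and Theorem~\ref{thmAveUnSol} yields the conclusion.

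There is no serious obstacle here; the only point requiring care is the identification of $[A-I_n,A+I_n]$ as the set of diagonal $\pm1$ perturbations of $A$, after which the estimate $|a_{ii}+d_i|\ge|a_{ii}|-1$ combined with strict diagonal dominance does all the work. One could alternatively route through Proposition~\ref{thmAveUnSolSuffHmat} by verifying that $A-I_n$ is an H-matrix with positive diagonal, but the direct regularity argument is shorter and sidesteps the bookkeeping with $|a_{ii}-1|$ in the comparison matrix.
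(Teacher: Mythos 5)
Your proof is correct. The paper itself states this proposition without proof (it is a survey result credited to Wu and Guo), so there is no in-paper argument to compare against; your route --- identifying $[A-I_n,A+I_n]$ as the set of diagonal perturbations $A+\diag(d)$ with $d\in[-1,1]^n$, verifying strict diagonal dominance of each member via $|a_{ii}+d_i|\ge|a_{ii}|-1>\sum_{j\neq i}|a_{ij}|$, invoking Levy--Desplanques (Gershgorin) for nonsingularity, and concluding regularity so that Theorem~\ref{thmAveUnSol} applies --- is a clean, self-contained derivation entirely within the paper's own toolkit, and it is essentially the natural way to prove the statement. One caution about your closing remark: the alternative route through Proposition~\ref{thmAveUnSolSuffHmat} is not just a matter of bookkeeping, because the hypothesis constrains only $|a_{ii}|$ and therefore permits $a_{ii}<0$ (for instance $A=\diag(-5,\dots,-5)$ satisfies the assumption); in that case $A-I_n$ has negative diagonal entries, so the ``H-matrix with positive diagonal'' hypothesis of that proposition fails outright, and the detour would require an additional sign-flipping argument before it could be applied. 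Your direct regularity argument avoids this issue entirely and is the right choice.
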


\subsubsection{Nonnegative Solutions}\label{sInvNonneg}

Hlad\'{\i}k~\cite{Hla2023a} studied the existence of nonnegative solutions and their uniqueness. We have an efficient way to check it.

\begin{theorem}%\label{propSolvNonneg}
The following conditions are equivalent:
\begin{enumerate}[(i)]
\item 
The AVE has a unique nonnegative solution for each $b\geq0$;
\item 
the AVE has a nonnegative solution for each $b\geq0$;
\item
$(A-I_n)^{-1}\geq0$.
\end{enumerate}
\end{theorem}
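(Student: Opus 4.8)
The plan is to prove the cyclic chain (iii)$\Rightarrow$(i)$\Rightarrow$(ii)$\Rightarrow$(iii). The observation that drives everything is that a nonnegative vector $x\geq0$ satisfies $|x|=x$, so the AVE $Ax-|x|=b$ restricted to the nonnegative orthant collapses to the plain linear system $(A-I_n)x=b$. Thus ``$x\geq0$ solves the AVE'' is equivalent to ``$x\geq0$ and $(A-I_n)x=b$''. I will write $M\coloneqq A-I_n$ for brevity throughout.

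For (iii)$\Rightarrow$(i): the assumption $M^{-1}\geq0$ already presupposes that $M$ is invertible. For any $b\geq0$ set $x^*\coloneqq M^{-1}b$; then $x^*\geq0$ by nonnegativity of $M^{-1}$, and $Mx^*=b$, so $x^*$ is a nonnegative solution. Any nonnegative solution $x$ obeys $Mx=b$, whence $x=M^{-1}b=x^*$, giving uniqueness. The implication (i)$\Rightarrow$(ii) is immediate, since the existence of a \emph{unique} nonnegative solution entails the existence of one.

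The only implication requiring an idea is (ii)$\Rightarrow$(iii), and here the plan is to feed the hypothesis with the canonical unit vectors. For each $k$, since $e_k\geq0$, assumption (ii) supplies a nonnegative vector $x^{(k)}\geq0$ with $Mx^{(k)}=e_k$. Assembling these columns into the matrix $X\coloneqq[x^{(1)},\dots,x^{(n)}]\geq0$ yields $MX=I_n$. Because $M$ is square, a right inverse is automatically a two-sided inverse, so $M$ is nonsingular and $M^{-1}=X\geq0$, which is exactly (iii).

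I expect the main (indeed the only) obstacle to lie in the invertibility step inside (ii)$\Rightarrow$(iii): a priori, (ii) furnishes merely \emph{some} nonnegative solution for each right-hand side, not a linear right inverse, and the key is to notice that testing on the $n$ basis vectors and collecting the resulting solutions produces precisely a nonnegative right inverse $X$, after which squareness upgrades it to a genuine two-sided inverse. Everything else reduces to the elementary identity $|x|=x$ on the nonnegative orthant.
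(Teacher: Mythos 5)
Your proposal is correct. Note that the paper itself states this theorem without proof (it is quoted from Hlad\'{\i}k's work, reference [Hla2023a]), so there is no in-text argument to compare against; your proof supplies the natural one. The reduction driving everything --- that $x\geq0$ solves $Ax-|x|=b$ if and only if $x\geq0$ and $(A-I_n)x=b$ --- is exactly the right observation, and it turns the theorem into the classical characterization of monotone (inverse-nonnegative) matrices: $M^{-1}\geq0$ iff $Mx=b$ admits a nonnegative solution for every $b\geq0$. Your handling of the one nontrivial step, (ii)$\Rightarrow$(iii), is sound: testing on $e_1,\dots,e_n$ produces a nonnegative matrix $X$ with $MX=I_n$, and for a square matrix a right inverse is automatically two-sided, so $M$ is nonsingular with $M^{-1}=X\geq0$. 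The other two implications are verified correctly, including the point that uniqueness in (i) is uniqueness \emph{among nonnegative solutions}, which is precisely what the identity $x=M^{-1}b$ delivers.
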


%It is an open question whether unique solvability of AVE for every $b\geq0$ implies unique solvability of AVE for every $b\in\R^n$. 

To achieve the situation that the nonnegative solution is the unique solution, the concept of inverse nonnegative matrices turns out to be useful. An interval matrix $[\umace{A},\omace{A}]$ is inverse nonnegative if $A^{-1}\geq0$ for each $A\in[\umace{A},\omace{A}]$. Kuttler~\cite{Kut1971} proved that inverse nonnegativity of $[\umace{A},\omace{A}]$ can be easily characterized as inverse nonnegativity of the lower and upper bound matrices. In our context, $[A-I_n,A+I_n]$ is inverse nonnegative if and only if $(A-I_n)^{-1}\geq0$ and $(A+I_n)^{-1}\geq0$.

\begin{proposition}\label{propSolvInvNonnegNonneg}
If $[A-I_n,A+I_n]$ is inverse nonnegative, then for each $b \geq 0$, the AVE has a unique solution, and this solution is nonnegative.
\end{proposition}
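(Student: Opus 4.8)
The plan is to show that under the inverse-nonnegativity assumption on $[A-I_n,A+I_n]$, the AVE is uniquely solvable and its solution is nonnegative whenever $b\geq0$. First I would invoke Theorem~\ref{thmAveUnSol}: since $[A-I_n,A+I_n]$ being inverse nonnegative means every matrix in it is nonsingular (its inverse exists and is entrywise nonnegative), the interval matrix is in particular regular. Theorem~\ref{thmAveUnSol} then immediately guarantees that the AVE has a unique solution for each $b\in\R^n$, and in particular for the given $b\geq0$. This handles existence and uniqueness with essentially no work, so the real content is the \emph{nonnegativity} of that unique solution.

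For the sign of the solution, the key is the orthant decomposition from Section~\ref{SolutionSet}. On the nonnegative orthant, the AVE $Ax-|x|=b$ reduces to the linear system $(A-I_n)x=b$ with the side constraint $x\geq0$. By hypothesis $(A-I_n)^{-1}\geq0$ exists, so I would set $x^*\coloneqq(A-I_n)^{-1}b$. Because $(A-I_n)^{-1}\geq0$ and $b\geq0$, the product satisfies $x^*\geq0$, so $x^*$ lies in the nonnegative orthant and satisfies $|x^*|=x^*$. Substituting back gives $Ax^*-|x^*|=(A-I_n)x^*=b$, so $x^*$ is genuinely a solution of the AVE.

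Finally I would combine the two parts: the AVE has exactly one solution (by regularity), and I have exhibited a concrete nonnegative solution $x^*=(A-I_n)^{-1}b$. By uniqueness, this $x^*$ must be \emph{the} solution, and it is nonnegative, which is precisely the claim. Note that here I only used the lower-bound condition $(A-I_n)^{-1}\geq0$ to produce the nonnegative candidate; the upper-bound condition $(A+I_n)^{-1}\geq0$, together with Kuttler's characterization cited before the statement, is what secures full regularity of the interval matrix and hence uniqueness.

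The step I expect to be the main obstacle is ensuring that uniqueness is correctly sourced: one must be careful that inverse nonnegativity of the interval matrix really does yield regularity (so Theorem~\ref{thmAveUnSol} applies), since nonnegativity of inverses of only the two endpoint matrices is not a priori the same as nonsingularity of \emph{every} matrix in $[A-I_n,A+I_n]$. This is exactly where Kuttler's result is essential: it upgrades the two endpoint conditions to inverse nonnegativity (hence nonsingularity) of the whole interval matrix. Everything else is a short, direct verification.
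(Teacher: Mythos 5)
Your proof is correct. The paper itself states Proposition~\ref{propSolvInvNonnegNonneg} without proof (it is imported from~\cite{Hla2023a}), so there is no in-text argument to compare against, but your route is the natural one given the surrounding machinery: regularity of $[A-I_n,A+I_n]$ plus Theorem~\ref{thmAveUnSol} yields existence and uniqueness, and the explicit candidate $x^*=(A-I_n)^{-1}b\geq0$, which satisfies $Ax^*-|x^*|=(A-I_n)x^*=b$, settles nonnegativity.

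One small remark on your closing paragraph: under the paper's definition, ``$[A-I_n,A+I_n]$ is inverse nonnegative'' already means that \emph{every} matrix in the interval has a nonnegative (hence existing) inverse, so regularity follows immediately from the hypothesis and Kuttler's theorem is not actually needed anywhere in the proof. Kuttler's characterization only becomes essential if one wants to \emph{verify} the hypothesis from the two endpoint conditions $(A-I_n)^{-1}\geq0$ and $(A+I_n)^{-1}\geq0$; as a step of the proof itself it is redundant, and your first paragraph (which derives regularity directly from the definition) is the correct bookkeeping.
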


Moreover, the Newton method can effectively compute the unique solution (Section~\ref{ssNewtonMethod}), which yields the solution in at most $n$ iterations in this case.

The converse implication in Proposition~\ref{propSolvInvNonnegNonneg} can be proved under an assumption on the regularity of $[A-I_n,A+I_n]$; it is an open question if the assumption can be relaxed.

\begin{proposition}
Let $[A-I_n,A+I_n]$ be regular. If for each $b\geq0$ the AVE has a unique solution and this solution is nonnegative, then $[A-I_n,A+I_n]$ is inverse nonnegative.
\end{proposition}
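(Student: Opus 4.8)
The plan is to invoke Kuttler's characterization recalled above, by which $[A-I_n,A+I_n]$ is inverse nonnegative precisely when both $(A-I_n)^{-1}\geq0$ and $(A+I_n)^{-1}\geq0$; so it suffices to establish these two inequalities. The first is immediate: by hypothesis the AVE has a nonnegative solution for every $b\geq0$, and the equivalence in the theorem on nonnegative solutions (its condition~(iii)) then gives $(A-I_n)^{-1}\geq0$. Hence the entire difficulty lies in proving $(A+I_n)^{-1}\geq0$.

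First I would translate $(A+I_n)^{-1}\geq0$ back into the language of the AVE, mirroring the nonnegative-orthant analysis. On the nonpositive orthant $\{x\leq0\}$ one has $|x|=-x$, so the AVE reduces to $(A+I_n)x=b$; regularity makes $A+I_n$ nonsingular, and $x=(A+I_n)^{-1}b$ is the only candidate there. A direct check shows that $(A+I_n)^{-1}\geq0$ holds if and only if this candidate is nonpositive for every $b\leq0$, i.e.\ if and only if the unique solution of the AVE is nonpositive whenever $b\leq0$. Thus the goal becomes the negative-orthant counterpart of the hypothesis, and the real task is to transfer the sign information available for $b\geq0$ to the regime $b\leq0$.

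To carry out this transfer I would use regularity through a continuity argument along the diagonal deformation $M(z)=A+\diag(z)$ with $z$ ranging over the cube $-e\leq z\leq e$, which sweeps the whole interval matrix and joins the endpoints $M(-e)=A-I_n$ and $M(e)=A+I_n$. Regularity forces $\det M(z)\neq0$ throughout, so by connectedness of the cube the determinant keeps a constant sign and $z\mapsto M(z)^{-1}$ is continuous. Since $M(-e)^{-1}=(A-I_n)^{-1}\geq0$ is already known, the aim is to propagate entrywise nonnegativity of $M(z)^{-1}$ from $z=-e$ all the way to $z=e$, monitoring it via the standard identity $\partial_{z_i}M(z)^{-1}=-M(z)^{-1}e_ie_i^{\top}M(z)^{-1}$.

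The main obstacle is exactly this propagation. Continuity together with a constant determinant sign does not by itself prevent an off-diagonal entry of $M(z)^{-1}$ from crossing zero and becoming negative while $M(z)$ remains nonsingular, so any purely path-based argument is inadequate; what must be used is the regularity of the entire box $[A-I_n,A+I_n]$ rather than mere nonsingularity along one curve. I expect the crux to be proving that this global regularity, in conjunction with $(A-I_n)^{-1}\geq0$, excludes every such sign change — equivalently, that no right-hand side $b\leq0$ can push the solution out of the nonpositive orthant. This is precisely where the regularity hypothesis enters in an essential, non-local manner, and it is the step whose possible relaxation the surrounding discussion flags as open.
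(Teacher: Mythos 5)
Your two reductions are sound: under regularity the hypothesis is indeed equivalent to $(A-I_n)^{-1}\geq 0$, and by Kuttler's theorem everything hinges on proving $(A+I_n)^{-1}\geq 0$, which you correctly rephrase as the statement that the unique solution is nonpositive whenever $b\leq 0$. The gap is that this remaining step is never proved: you set up the deformation $M(z)=A+\diag(z)$, observe that continuity and a constant determinant sign cannot by themselves stop an entry of $M(z)^{-1}$ from crossing zero, and then merely \emph{expect} that global regularity together with $(A-I_n)^{-1}\geq0$ excludes such a crossing. That expectation is false, so the gap cannot be closed by any argument. Take
$$
A=\begin{pmatrix}0&2\\2&0\end{pmatrix}.
$$
Every member of $[A-I_2,A+I_2]$ has the form $\begin{pmatrix}t&2\\2&u\end{pmatrix}$ with $t,u\in[-1,1]$, hence determinant $tu-4<0$, so the interval matrix is regular; and $(A-I_2)^{-1}=\frac{1}{3}\begin{pmatrix}1&2\\2&1\end{pmatrix}\geq0$, so for every $b\geq 0$ the unique solution of $Ax-|x|=b$ is $(A-I_2)^{-1}b\geq0$, i.e., the hypothesis of the proposition holds. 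Yet $(A+I_2)^{-1}=\frac{1}{3}\begin{pmatrix}-1&2\\2&-1\end{pmatrix}\not\geq0$ (equivalently, for $b=(-1,0)^{\top}\leq0$ the unique solution is $(1/5,-2/5)^{\top}$, which is not nonpositive), so $[A-I_2,A+I_2]$ is not inverse nonnegative. In other words, the proposition as printed here is false, and your attempt was doomed for reasons independent of the strategy you chose.

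Your own derivative identity pinpoints the asymmetry and what the true statement must be. When $M(z)^{-1}\geq0$, the identity $\partial_{z_i}M(z)^{-1}=-M(z)^{-1}e_ie_i^{\top}M(z)^{-1}\leq0$ shows that entries of $M(z)^{-1}$ can only \emph{decrease} as $z$ increases; nonnegativity of the inverse therefore propagates downward from the upper endpoint $A+I_n$, never upward from the lower endpoint $A-I_n$. Correspondingly, regularity together with $(A+I_n)^{-1}\geq0$ \emph{does} imply inverse nonnegativity of the whole interval: writing $A+sI_n=(A+I_n)\bigl(I_n-(1-s)P\bigr)$ with $P=(A+I_n)^{-1}\geq0$, regularity forbids real eigenvalues of $P$ in $[1/2,\infty)$, so by Perron--Frobenius $\rho(P)<1/2$, the Neumann series gives $\bigl(I_n-(1-s)P\bigr)^{-1}\geq0$ for all $s\in[-1,1]$, and Kuttler's theorem finishes the job. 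Hence the correct converse of Proposition~\ref{propSolvInvNonnegNonneg}, in this paper's convention $Ax-|x|=b$, must assume that for each $b\leq0$ the unique solution is nonpositive (which pins down the upper endpoint $(A+I_n)^{-1}\geq0$), not the stated hypothesis about $b\geq0$; the printed form is evidently an artifact of restating the source result, proved for the convention $Ax+|x|=b$, without mirroring the orthant. Your instinct that the path argument alone is inadequate was exactly right, but the conclusion to draw is that the claim itself, not just the method, breaks down.
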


%%%%%%%%%%%%%%%%%%%%%%%%%%%%%%%%%%%%%%
\subsubsection{Exponentially Many $(2^n)$ Solutions} \label{TwoNSolve}
	
%{\mh[This subsection was revised by MH]}

As we already observed, the AVE (\ref{Eq2}) can possess $2^n$ solutions. In this case, they have to lie in the interiors of mutually different orthants. 
We present two results from Hlad{\'\i}k~\cite{hladik2018bounds}.

%In \cite{mangasarian2006absolute} proved the following proposition:
%\begin{proposition}\label{prop10}
%(Existence of $2^n$ solutions). If $b < 0$ and $\| A\|_{\infty} <{\gamma}/{2} $ where $\gamma ={\min_{i}| b_i|}/{\max_{i}| b_i|}$, then AVE has exactly $2^n$ distinct solutions, each of which has no zero components and a different sign pattern.
%\end{proposition}

\begin{proposition}\label{propEqAveAExp}
Let  $\rho(| A|)<1$ and $b<0$. Then there are $2^n$ solutions provided at least one of the conditions holds
\begin{enumerate}[(i)]
	\item
	$2| b| > G(I_n-| A|)^{-1}|b|$,
	\item
	$| b| > 2| A| ~ | b|$,
\end{enumerate}
where $G$ is the diagonal matrix with entries $1/(I_n-| A|)^{-1}_{11},\dots,1/(I_n-| A|)^{-1}_{nn}$.
\end{proposition}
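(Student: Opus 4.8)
\textbf{The plan is} to decompose $\R^n$ into orthants and show each one contains exactly one solution, located in its interior. Fix a sign vector $s\in\{\pm1\}^n$ and put $D_s:=\diag(s)$. On the orthant $D_sx\ge0$ we have $|x|=D_sx$, so the AVE becomes the linear system $(A-D_s)x=b$. Because $|AD_s|=|A|$ and $\rho(|A|)<1$, the matrix $I_n-AD_s$, and hence $A-D_s$, is nonsingular, so this system has a unique solution $x^{(s)}$; any AVE solution in the closed orthant must coincide with it. Setting $y:=D_sx^{(s)}$ and using $-b=|b|$ (as $b<0$), a short manipulation gives $y=(I_n-AD_s)^{-1}|b|$, and $x^{(s)}$ lies in the \emph{interior} of orthant $s$ exactly when $y>0$. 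Since distinct $s$ give distinct orthants, no boundary point can be a solution once every $y>0$ (it would force a zero coordinate), and the previously recorded upper bound $|\Sigma|\le 2^n$ is attained, the whole statement reduces to proving $y>0$ for every $s$ under (i) or (ii). Throughout write $M:=AD_s$, so $|M|=N:=|A|$, recall $|(I_n-M)^{-1}|\le(I_n-N)^{-1}$, and set $z:=(I_n-|A|)^{-1}|b|$, the bound from Theorem~\ref{propBoundEqAveA}, so that $|y|\le z$.

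\textbf{Under (ii)} a single global estimate will do. From $y=|b|+My$ and $|y|\le z$ one gets $y_i\ge|b|_i-[Nz]_i$, and since $(I_n-N)z=|b|$ yields $Nz=z-|b|$, this reads $y_i\ge 2|b|_i-z_i$. Condition (ii), namely $|b|>2|A|\,|b|=2N|b|$, forces $N|b|<\tfrac12|b|$, hence $N^k|b|<2^{-k}|b|$, so $z=\sum_{k\ge0}N^k|b|<2|b|$; therefore $y>0$.

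\textbf{Under (i)} this estimate is too weak: it proves only $2|b|>z$, and even extracting the diagonal entry $M_{ii}$ exactly improves it merely to $2|b|>(I_n-\diag(N))z$, both strictly stronger than (i) because $1/[(I_n-N)^{-1}]_{ii}\le 1-N_{ii}$. The remedy I would use is to isolate the single coordinate $y_i$ by eliminating the others via a Schur complement. Splitting the indices into $\{i\}$ and $-i:=\{1,\dots,n\}\setminus\{i\}$ (with $M_{-i,-i}$ the principal submatrix deleting row and column $i$, and $M_{i,-i}$, $M_{-i,i}$ the corresponding slices of row/column $i$), one obtains
\[
y_i=\frac{\,|b|_i+M_{i,-i}(I-M_{-i,-i})^{-1}|b|_{-i}\,}{(1-M_{ii})-M_{i,-i}(I-M_{-i,-i})^{-1}M_{-i,i}}=:\frac{\mathrm{num}_i}{S_i},
\]
where $I$ is of size $n-1$. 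Here $\rho(N_{-i,-i})\le\rho(N)<1$ guarantees that $I-M_{-i,-i}$ is invertible and that $|(I-M_{-i,-i})^{-1}|\le(I-N_{-i,-i})^{-1}$, whence $\mathrm{num}_i\ge|b|_i-N_{i,-i}(I-N_{-i,-i})^{-1}|b|_{-i}$ and $S_i\ge(1-N_{ii})-N_{i,-i}(I-N_{-i,-i})^{-1}N_{-i,i}$.

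\textbf{The crux}, which I expect to be the main obstacle, is the bookkeeping that identifies these two lower bounds with condition (i) through the block inverse of $C:=(I_n-|A|)^{-1}$. Block inversion gives $C_{ii}=\big[(1-N_{ii})-N_{i,-i}(I-N_{-i,-i})^{-1}N_{-i,i}\big]^{-1}$ and the off-diagonal row $C_{i,-i}=C_{ii}\,N_{i,-i}(I-N_{-i,-i})^{-1}$. Thus the lower bound on $S_i$ is exactly $1/C_{ii}>0$ (note $C_{ii}\ge1$), while, because $G_{ii}=1/C_{ii}$, the $i$-th row of (i) reads $C_{ii}|b|_i>\sum_{j\ne i}C_{ij}|b|_j=C_{ii}\,N_{i,-i}(I-N_{-i,-i})^{-1}|b|_{-i}$, i.e.\ precisely that $\mathrm{num}_i$'s lower bound is positive. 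Hence $S_i>0$ and $\mathrm{num}_i>0$, so $y_i>0$ for every $i$ and every $s$, completing the reduction. The difficulty is therefore not the estimation itself but recognizing that the diagonal-reciprocal matrix $G$ and the coupling encoded in $(I_n-|A|)^{-1}$ are exactly the numerator and denominator produced by eliminating all variables but $y_i$, and verifying that this full accounting of returning paths through $i$ (not just the diagonal of $|A|$) is what weakens the crude condition down to (i).
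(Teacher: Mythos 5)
Your proof is correct. One caveat on the comparison: the survey states this proposition without proof, importing it from the cited reference \cite{hladik2018bounds}, so there is no in-paper argument to measure you against; judged on its own, your argument is complete. The reduction is sound: for each $s\in\{\pm1\}^n$, nonsingularity of $A-\diag(s)$ follows from $\rho(A\diag(s))\le\rho(|A\diag(s)|)=\rho(|A|)<1$, the unique candidate in the closed orthant is $x^{(s)}$ with $y=\diag(s)x^{(s)}=(I_n-A\diag(s))^{-1}|b|$, and strict positivity of all $y$'s simultaneously yields exactly $2^n$ solutions. Your case (ii) computation ($y\ge 2|b|-z$ together with $z=\sum_{k\ge0}N^k|b|<2|b|$ from $N|b|<\tfrac12|b|$) is right, and your case (i) is the genuinely delicate part: you correctly observe that the crude estimate, and even the diagonal-extraction refinement $2|b|>(I_n-\diag(|A|))z$, are strictly stronger hypotheses than (i) because $1/[(I_n-|A|)^{-1}]_{ii}\le 1-|A|_{ii}$, so some accounting of the full return structure is unavoidable. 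The Schur-complement elimination does this: the block-inverse identities $C_{ii}=\bigl[(1-N_{ii})-N_{i,-i}(I-N_{-i,-i})^{-1}N_{-i,i}\bigr]^{-1}$ and $C_{i,-i}=C_{ii}N_{i,-i}(I-N_{-i,-i})^{-1}$ are exactly what turn condition (i), i.e.\ $C_{ii}|b|_i>\sum_{j\ne i}C_{ij}|b|_j$, into positivity of your numerator bound, while $S_i\ge 1/C_{ii}>0$ handles the denominator (using $\rho(N_{-i,-i})\le\rho(N)<1$ and $|(I-M_{-i,-i})^{-1}|\le(I-N_{-i,-i})^{-1}$, both of which you invoke legitimately). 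All identities and inequalities check out, so the argument stands as a valid, self-contained proof of the proposition.
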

	
It can be shown that the first condition is weaker so that $(i)\ \Rightarrow\ (ii)$. On the other hand, the second condition is more easy to check. Notice that another sufficient condition was presented by Mangasarian and Meyer~\cite{mangasarian2006absolute}, but the above is provably stronger.

\begin{comment}
\begin{proposition}
Suppose that $\rho(| A|) < 1$ and at least one of the conditions (i) or (ii)
from Proposition~\ref{propEqAveAExp} holds. If $b \geq 0 \Rightarrow b > 0$, then the AVE has no solution or $2^n$ of them.
\end{proposition}
\end{comment}

%%%%%%%%%%%%%%%%%%%%%%%%%%%%%%%%%%%%%%	
\subsection{Nonexistence of Solutions for the AVE}\label{NoSolve}

In Section~\ref{RAVELCP}, we presented a reduction AVE $\rightarrow$ LCP. Based on it, we can see that when the AVE is solvable, then
\begin{align*}
(A-I_n)x^+ - (A+I_n)x^-=b,\ x^+,x^-\geq0
\end{align*}
is solvable. By the Farkas theorem of the alternative, the dual system is unsolvable. This brings us to the result from~\cite{mangasarian2006absolute}.

\begin{proposition}
If
\begin{align}\label{aveRelaxDual}
-y\leq A^{\top} y\leq y,\ b^{\top} y>0
\end{align}
is solvable, then the AVE is unsolvable.
\end{proposition}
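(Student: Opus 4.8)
The plan is to combine the reduction AVE $\to$ LCP recalled just above with the Farkas theorem of the alternative. First I would note that discarding the complementarity constraint only enlarges the feasible set: if $x$ solves the AVE, then its positive and negative parts $x^{+},x^{-}\geq0$ satisfy $x=x^{+}-x^{-}$ and $|x|=x^{+}+x^{-}$, so substituting into $Ax-|x|=b$ yields
$$
(A-I_n)x^{+}-(A+I_n)x^{-}=b,\quad x^{+},x^{-}\geq0.
$$
This is exactly the relaxation of \eqref{aveAsLCP} obtained by dropping $(x^{+})^{\top}x^{-}=0$. Hence solvability of the AVE implies solvability of this purely linear feasibility system.

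Next I would write the system compactly as $Mz=b$, $z\geq0$, where $z=\begin{pmatrix} x^{+}\\ x^{-}\end{pmatrix}$ and $M=[\,A-I_n\mid -(A+I_n)\,]\in\R^{n\times 2n}$. By the Farkas lemma, this system is solvable if and only if there is \emph{no} vector $y$ with $M^{\top}y\leq0$ and $b^{\top}y>0$. Computing the transpose block-wise, $M^{\top}y\leq0$ reads
$$
(A^{\top}-I_n)y\leq0,\qquad -(A^{\top}+I_n)y\leq0,
$$
i.e.\ $A^{\top}y\leq y$ and $A^{\top}y\geq -y$ simultaneously, which is precisely $-y\leq A^{\top}y\leq y$. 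Thus the Farkas alternative to the relaxed feasibility system is exactly the system \eqref{aveRelaxDual}.

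Putting the two steps together yields the claim by contraposition: if \eqref{aveRelaxDual} is solvable, then the relaxed system $Mz=b$, $z\geq0$ has no solution, so a fortiori the AVE, whose solvability would force that relaxed system to be feasible, has no solution either. I do not expect a genuinely hard step here; the proposition is essentially an application of linear-programming duality to the relaxation already set up in the excerpt. The only point demanding care is the bookkeeping of the block transpose of $M$ — one must check that the two inequality blocks assemble into the two-sided bound $-y\leq A^{\top}y\leq y$ (and with the correct orientation) rather than some reversed or one-sided variant.
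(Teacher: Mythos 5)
Your proof is correct and follows essentially the same route as the paper: the paper's own justification is precisely the observation that solvability of the AVE forces solvability of the relaxed system $(A-I_n)x^+-(A+I_n)x^-=b$, $x^+,x^-\geq0$ obtained from \eqref{aveAsLCP} by dropping complementarity, so that by the Farkas theorem the alternative system \eqref{aveRelaxDual} must be unsolvable. Your block-transpose computation verifying that the Farkas alternative is exactly $-y\leq A^{\top}y\leq y$, $b^{\top}y>0$ is accurate, so there is no gap.
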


Mangasarian and Meyer \cite{mangasarian2006absolute} also stated the following condition.

\begin{proposition}\label{thmAveUnsolvNormA}
Let $0\neq b\geq 0$ and $\| A\| < 1$. Then, the AVE has no solution.	\end{proposition}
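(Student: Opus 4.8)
The plan is to argue by contradiction, combining the submultiplicativity of the spectral norm with the monotonicity of the Euclidean norm on the nonnegative orthant. First I would suppose that some $x$ solves $Ax-|x|=b$ and rewrite the equation as $Ax=|x|+b$. The degenerate case is quick: if $x=0$, then the equation forces $b=A\cdot0-|0|=0$, contradicting $b\neq0$, so from here on I may assume $x\neq0$.

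For $x\neq0$ the idea is to bound $\|Ax\|$ both from above and from below and reach an incompatibility. From above, submultiplicativity of the spectral norm gives $\|Ax\|\leq\|A\|\,\|x\|<\|x\|$, where the strict inequality uses $\|A\|<1$ and $x\neq0$. From below, I would exploit the sign assumption: since $b\geq0$, we have $|x|+b\geq|x|\geq0$ componentwise, and because the Euclidean norm is monotone on the nonnegative orthant (for $0\leq v\leq u$ one has $\|v\|\leq\|u\|$, as $v_i^2\leq u_i^2$ entrywise), together with $\||x|\|=\|x\|$, this yields $\|Ax\|=\||x|+b\|\geq\||x|\|=\|x\|$. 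Chaining the two estimates produces $\|x\|\leq\|Ax\|<\|x\|$, the desired contradiction.

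There is no real obstacle here, as the whole argument is a single chain of inequalities; the only step meriting a moment's care is the lower bound, where the hypothesis $b\geq0$ is precisely what makes the Euclidean norm monotone in the needed direction. Replacing $b\geq0$ by a vector of arbitrary sign would invalidate $\||x|+b\|\geq\|x\|$ and the proof would collapse. I would also remark that the argument uses only $\|A\|<1$ and $0\neq b\geq0$, so it transfers verbatim to any norm that is monotone on the nonnegative orthant in place of the Euclidean one.
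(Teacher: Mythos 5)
Your proof is correct and complete; the paper states this proposition without reproducing a proof (it cites Mangasarian and Meyer), and your argument --- the chain $\|x\| = \||x|\| \leq \||x|+b\| = \|Ax\| \leq \|A\|\,\|x\| < \|x\|$ for $x \neq 0$, resting on monotonicity of the Euclidean norm on the nonnegative orthant --- is essentially the standard one from that reference. The degenerate case $x=0$ is correctly dispatched by $b \neq 0$, and the hypothesis $b \geq 0$ is invoked exactly where it is needed, so there is no gap.
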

 
Other two conditions for AVE's unsolvability were presented in~\cite{hladik2018bounds}. The first condition is based on the bounds in \eqref{boundPropBoundEqAveA}. Obviously, if the bounds yield an empty set, then there is no solution to the AVE.

\begin{proposition}\label{thmUnsolvHla}
The AVE is unsolvable if
\begin{align}\label{testEqAveANonex}
\rho(|A|)<1\ \mbox{ and }\ -(I_n-|A|)^{-1} b\mbox{ is not nonnegative.}
\end{align}
\end{proposition}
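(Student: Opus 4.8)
The plan is to derive this proposition as an immediate consequence of the solution bound established in Theorem~\ref{propBoundEqAveA}. The strategy is a proof by contradiction: I would assume that the AVE admits some solution $x$ and then show that the hypothesis $\rho(|A|)<1$ forces $-(I_n-|A|)^{-1}b$ to be nonnegative, contradicting the condition in \eqref{testEqAveANonex}.

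Concretely, suppose $x$ solves the AVE and that $\rho(|A|)<1$. The latter condition guarantees that $I_n-|A|$ is nonsingular: every eigenvalue $\mu$ of $|A|$ satisfies $|\mu|<1$, so the corresponding eigenvalue $1-\mu$ of $I_n-|A|$ is nonzero, and hence the vector $-(I_n-|A|)^{-1}b$ is well defined. Now Theorem~\ref{propBoundEqAveA} applies and yields the componentwise bound $|x|\leq-(I_n-|A|)^{-1}b$ from \eqref{boundPropBoundEqAveA}. Combining this with the trivial inequality $|x|\geq0$, which holds for every real vector, I obtain the chain $0\leq|x|\leq-(I_n-|A|)^{-1}b$, and in particular $-(I_n-|A|)^{-1}b\geq0$.

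This last conclusion is precisely the negation of the hypothesis in \eqref{testEqAveANonex}, which stipulates that $-(I_n-|A|)^{-1}b$ is \emph{not} nonnegative. Hence the assumed solution $x$ cannot exist, and the AVE is unsolvable. There is no genuine obstacle here; the full analytic weight of the argument already resides in Theorem~\ref{propBoundEqAveA}, and the present proposition merely reads off the elementary consistency requirement that any valid upper bound on a nonnegative quantity must itself be nonnegative. The only point deserving a moment's care is the invertibility of $I_n-|A|$ noted above, which ensures the expression in \eqref{testEqAveANonex} is meaningful and which in any case underlies the Neumann-series reasoning behind Theorem~\ref{propBoundEqAveA}.
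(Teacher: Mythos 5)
Your proposal is correct and follows essentially the same route as the paper, which introduces this proposition as a direct consequence of the bound $|x|\leq-(I_n-|A|)^{-1}b$ from Theorem~\ref{propBoundEqAveA}: since $|x|\geq0$ for any solution, the bound yields an empty set (hence unsolvability) whenever $-(I_n-|A|)^{-1}b$ fails to be nonnegative. Your added remark on the invertibility of $I_n-|A|$ under $\rho(|A|)<1$ is a harmless and correct piece of bookkeeping.
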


 \begin{proposition}
If $\rho(| A|)<1$ and 
\begin{equation}
2b_i > \frac{1}{(I-| A|)_{ii}^{-1}} (I_n-| A|)_{i*}^{-1}| b|,
\end{equation}
for some $i$,  then the AVE has no solution.
\end{proposition}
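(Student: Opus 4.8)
The plan is to reduce the claim to the unsolvability test of Proposition~\ref{thmUnsolvHla}. Write $M\coloneqq(I_n-|A|)^{-1}$. Since $\rho(|A|)<1$, the Neumann series gives $M=\sum_{k\ge0}|A|^k$, so $M\ge0$ entrywise and in particular $M_{ii}\ge1>0$; the displayed hypothesis is therefore equivalent to $2M_{ii}b_i>(M|b|)_i$. Recall that Proposition~\ref{thmUnsolvHla} declares the AVE unsolvable as soon as $\rho(|A|)<1$ and the vector $-Mb$ fails to be nonnegative. Hence it suffices to show that the hypothesis forces $(-Mb)_i<0$, i.e. $(Mb)_i>0$, for the very index $i$ at hand.

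The heart of the argument is a single entrywise estimate. Splitting off the diagonal term, $(Mb)_i=M_{ii}b_i+\sum_{j\ne i}M_{ij}b_j$ and $(M|b|)_i=M_{ii}|b_i|+\sum_{j\ne i}M_{ij}|b_j|$. Because every off-diagonal entry $M_{ij}$ is nonnegative and $b_j\ge-|b_j|$, we have $\sum_{j\ne i}M_{ij}b_j\ge-\sum_{j\ne i}M_{ij}|b_j|=-\bigl((M|b|)_i-M_{ii}|b_i|\bigr)$. Substituting this lower bound and then the hypothesis $(M|b|)_i<2M_{ii}b_i$ yields $(Mb)_i\ge M_{ii}b_i-(M|b|)_i+M_{ii}|b_i|>M_{ii}b_i-2M_{ii}b_i+M_{ii}|b_i|=M_{ii}\bigl(|b_i|-b_i\bigr)\ge0$. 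Thus $(Mb)_i>0$, so $-Mb$ is not nonnegative and Proposition~\ref{thmUnsolvHla} applies.

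As a remark, one can equally avoid citing Proposition~\ref{thmUnsolvHla} and conclude directly from the bound of Theorem~\ref{propBoundEqAveA}: any solution $x$ must satisfy $0\le|x_i|\le(-Mb)_i$, which is impossible once $(-Mb)_i<0$. Either way, the only nontrivial point is the sign bookkeeping in the estimate above; there is no analytic obstacle, since the hard work---the a priori bound $|x|\le-Mb$ valid under $\rho(|A|)<1$---has already been established. The one subtlety worth flagging is that the chain of inequalities must retain the strictness coming from the hypothesis while the terms $|b_i|-b_i$ and $M_{ii}|b_i|$ contribute only nonnegatively, so that the final strict inequality $(Mb)_i>0$ is genuinely produced by the assumed strict inequality and is not lost along the way.
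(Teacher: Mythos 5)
Your argument is correct and is essentially the intended proof: the paper introduces both nonexistence tests as consequences of the bound $|x|\le-(I_n-|A|)^{-1}b$ of Theorem~\ref{propBoundEqAveA}, and your entrywise estimate (using $M=(I_n-|A|)^{-1}\ge0$, $M_{ii}\ge1$, and $b_j\ge-|b_j|$) is exactly the computation that links the displayed condition to that bound, merely organized as a reduction to Proposition~\ref{thmUnsolvHla} rather than as a direct contradiction with the existence of a solution. A pleasant by-product of your formulation is that it makes explicit that this condition actually implies the condition of Proposition~\ref{thmUnsolvHla}, i.e., the present test can only fire when the first one does.
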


When $A\geq0$, then the above two conditions \eqref{aveRelaxDual} and  \eqref{testEqAveANonex} are almost equivalent; see~\cite{hladik2018bounds} for details.

\subsection{More on the Structure of the Solution Set}

Recall from Section~\ref{SolutionSet} that the solution set $\Sigma$ need not be convex or connected in general; however, it forms a convex polyhedron in each orthant. Thus, its structure can be complicated, and diverse situations may occur. However, some situations are forbidden. For example, it cannot happen that there are infinitely many solutions in each orthant~\cite{Hla2023a}. On the other hand, it may happen that $2^n-1$ orthants possess infinitely many solutions.

The case of finitely many solutions was analysed by Hlad\'{\i}k~\cite{Hla2023a}.

\begin{proposition}\label{propFinChar}
Let $A\in\R^{n\times n}$. The solution set of $Ax-|x|=b$ is finite for each $b\in\R^n$ if and only if $A+\diag(s)$ is nonsingular for each $s\in\{\pm1\}^n$.
\end{proposition}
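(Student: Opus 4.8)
The plan is to exploit the orthant decomposition recalled in Section~\ref{SolutionSet}. I would first note that the condition is insensitive to the sign flip $s \mapsto -s$: since $\{-s : s \in \{\pm 1\}^n\} = \{\pm1\}^n$, all matrices $A + \diag(s)$ are nonsingular if and only if all matrices $A - \diag(s)$ are. I would work with the latter form, because on the orthant $\{x : \diag(s) x \ge 0\}$ one has $|x| = \diag(s) x$, so the AVE restricted there reduces to the linear system $(A - \diag(s)) x = b$, and the whole solution set decomposes as
$$ \Sigma = \bigcup_{s \in \{\pm 1\}^n} \{x \in \R^n : (A - \diag(s)) x = b,\ \diag(s) x \ge 0\}. $$

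For sufficiency I would assume every $A - \diag(s)$ is nonsingular. Then each of the $2^n$ linear systems $(A - \diag(s)) x = b$ has exactly one solution, so each set in the union is empty or a singleton; hence $|\Sigma| \le 2^n$ for every $b \in \R^n$, and in particular $\Sigma$ is finite. Note that finiteness here does not even invoke the inequality constraints.

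For necessity I would argue by contraposition, assuming $A - \diag(s)$ is singular for some fixed $s$ and constructing a single $b$ for which $\Sigma$ is infinite. I would pick a nonzero $v \in \ker(A - \diag(s))$ and take the base point $x_0 = s$, so that $\diag(s) x_0 = e > 0$ places $x_0$ in the open orthant and $|x_0| = \diag(s) x_0$. Setting $b := (A - \diag(s)) s = As - e$, I would examine the segment $x(t) = s + t v$: since $\diag(s) x(t) = e + t\, \diag(s) v$, the point $x(t)$ remains in the open orthant for $|t| < 1/\max_i |v_i|$, where $|x(t)| = \diag(s) x(t)$, and there $(A - \diag(s)) x(t) = b + t(A - \diag(s)) v = b$. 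Thus each such $x(t)$ solves the AVE, and as $v \ne 0$ they are pairwise distinct, exhibiting infinitely many solutions.

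The step I expect to demand the most care is this last construction: a null vector of $A - \diag(s)$ only yields a line of solutions of the relaxed linear system, and such a line consists of genuine AVE-solutions only while it stays inside the chosen orthant. Choosing $x_0 = s$ is the device that makes this work cleanly, as it provides uniform slack $1$ in every coordinate of $\diag(s) x_0$, guaranteeing a whole segment of honest solutions. The remaining points---distinctness of the $x(t)$ and the sign-flip equivalence of the two matrix conditions---are routine.
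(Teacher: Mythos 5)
Your proof is correct, and it follows the approach that is natural given the paper's own setup: the paper states Proposition~\ref{propFinChar} without proof (citing~\cite{Hla2023a}), but the orthant decomposition of $\Sigma$ recalled in Section~\ref{SolutionSet} is exactly the tool one is expected to use, and both directions of your argument are sound --- sufficiency via uniqueness of the solution of each linear system $(A-\diag(s))x=b$, and necessity via the kernel vector $v$ and the base point $x_0=s$, whose uniform slack guarantees a genuine segment of solutions inside one orthant. The sign-flip observation correctly reconciles the statement's $A+\diag(s)$ with the decomposition's $A-\diag(s)$, so there is no gap.
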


The exponential number of matrices can hardly be substantially decreased; it was proved that the property is co-NP-hard to check, even when matrix $A$ has rank one. On the other hand, several sufficient conditions proposed by Hlad\'{\i}k~\cite{Hla2023a}. 

Even when the solution set is infinite, it still might be bounded. Boundedness is easy to characterize; however, the condition is NP-hard to check~\cite{Hla2023a}. Notice that, by convention, the empty set is considered as bounded.

\begin{proposition}%\label{propBound}
Let $A\in\R^{n\times n}$. The solution set of $Ax-|x|=b$ is bounded for each $b\in\R^n$ if and only if $Ax+|x|=0$ has only the trivial solution $x=0$.
\end{proposition}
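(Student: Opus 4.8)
The plan is to exploit the positive homogeneity of the map $f(x)\coloneqq Ax-|x|$, which satisfies $f(tx)=tf(x)$ for every $t\geq0$ since both $Ax$ and $|x|$ scale linearly under nonnegative scalars. First I would record the elementary sign symmetry linking the two homogeneous equations: writing $g(x)\coloneqq Ax+|x|$, one has $f(-x)=-Ax-|x|=-g(x)$, so $x$ solves $g(x)=0$ if and only if $-x$ solves $f(x)=0$. Consequently, $Ax+|x|=0$ has only the trivial solution precisely when $Ax-|x|=0$ does, and it suffices to characterize boundedness through the latter, which is the equation naturally produced by the arguments below.

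For the easy implication (via its contrapositive), suppose $Ax+|x|=0$ admits a nonzero solution $d$. Then $d'\coloneqq-d\neq0$ satisfies $f(d')=0$, and positive homogeneity gives $f(td')=t\,f(d')=0$ for all $t\geq0$. Hence the whole ray $\{td'\mmid t\geq0\}$ lies in the solution set $\Sigma$ for the particular right-hand side $b=0$, so $\Sigma$ is unbounded for this $b$. This shows that boundedness for every $b$ forces the homogeneous equation to admit only the trivial solution.

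For the converse---the main direction---I would argue by contradiction using a normalization (recession) argument. Assume $Ax-|x|=0$ has only $x=0$ as a solution, yet $\Sigma$ is unbounded for some $b$. Pick a sequence $x^{(k)}\in\Sigma$ with $\|x^{(k)}\|\to\infty$ and set $d^{(k)}\coloneqq x^{(k)}/\|x^{(k)}\|$; after passing to a subsequence, $d^{(k)}\to d$ with $\|d\|=1$. Dividing $f(x^{(k)})=b$ by $\|x^{(k)}\|$ and invoking positive homogeneity yields $f(d^{(k)})=b/\|x^{(k)}\|\to0$, so continuity of $f$ forces $f(d)=0$ with $d\neq0$, contradicting triviality. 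Combined with the sign symmetry from the first step, this completes the equivalence.

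The main obstacle I anticipate is not any single computation but keeping the two quantifiers and the sign convention straight: the unbounded direction produced by the normalization argument naturally solves $Ax-|x|=0$, whereas the statement is phrased in terms of $Ax+|x|=0$, and the ``for each $b$'' must be handled asymmetrically---using an arbitrary unbounded $b$ in one direction and the single choice $b=0$ in the other. Once the homogeneity of $f$ and the reflection $x\mapsto-x$ are in place, these bookkeeping points resolve cleanly, and no appeal to the explicit orthant-by-orthant polyhedral description is needed.
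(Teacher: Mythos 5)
Your proof is correct. Note that the paper itself states this proposition without proof, citing Hlad\'{\i}k~\cite{Hla2023a}, so there is no in-paper argument to compare against; judged on its own, your argument is complete. The reduction of $Ax+|x|=0$ to $Ax-|x|=0$ via the reflection $x\mapsto -x$ (using $f(-x)=-(Ax+|x|)$), the easy direction from the ray $\{td'\mid t\geq0\}$ at the single right-hand side $b=0$, and the hard direction via normalization of an unbounded sequence, positive homogeneity, and continuity of $f(x)=Ax-|x|$ are all sound, and you handle the asymmetric quantifiers correctly. An alternative route, more in the spirit of the paper's orthant decomposition in Section~2.1, is polyhedral: in the orthant $\diag(s)x\geq0$ the solution set is the polyhedron $\{x \mid (A-\diag(s))x=b,\ \diag(s)x\geq0\}$, whose recession cone is $\{d \mid (A-\diag(s))d=0,\ \diag(s)d\geq0\}=\{d \mid Ad-|d|=0,\ \diag(s)d\geq0\}$; since a nonempty polyhedron is unbounded exactly when its recession cone is nontrivial and there are only finitely many orthants, boundedness for every $b$ is equivalent to triviality of the homogeneous equation. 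Your compactness argument buys independence from polyhedral theory and works verbatim for any continuous positively homogeneous map; the recession-cone argument buys a finite, purely algebraic certificate. Either is acceptable.
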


Even though the solution set $\Sigma$ is not generally convex, there are special situations when convexity occurs (by convention, the empty set is convex). For instance, if $[A-I_n,A+I_n]$ is regular, then there is a unique solution, which is a convex set. More generally, if $\Sigma$ is located in one orthant, it is convex. Surprisingly, the converse implication is also true~\cite{Hla2023a}. 

\begin{proposition}\label{propConvOrth}
The solution set $\Sigma$ is convex if and only if it is located in one orthant only, i.e., there is $s\in\{\pm1\}^n$ such that $\diag(s)x\geq0$ for each $x\in\Sigma$.
\end{proposition}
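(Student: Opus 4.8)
The plan is to prove the two implications separately, treating ``located in one orthant $\Rightarrow$ convex'' as the routine direction and ``convex $\Rightarrow$ located in one orthant'' as the substantive one.

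For the easy direction, suppose there is a sign vector $s\in\{\pm1\}^n$ with $\diag(s)x\geq0$ for every $x\in\Sigma$. Then on this orthant $|x|=\diag(s)x$, so the AVE becomes $(A-\diag(s))x=b$, and I would invoke the orthant decomposition recalled in Section~\ref{SolutionSet} to conclude
$$
\Sigma=\{x\in\R^n : (A-\diag(s))x=b,\ \diag(s)x\geq0\}.
$$
This is the intersection of an affine subspace with a polyhedral cone, hence a convex polyhedron; in particular $\Sigma$ is convex (and the empty set is convex by convention).

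For the converse I would argue by contraposition, first reducing the orthant condition to a coordinatewise one. Specifically, $\Sigma$ fails to lie in a single orthant exactly when there is an index $i$ and solutions $x,x'\in\Sigma$ with $x_i>0$ and $x'_i<0$; the verification is to set $s_i=1$ or $s_i=-1$ coordinatewise whenever one of the two strict signs is absent, which produces a witnessing $s$ in the contrary case. Assuming such $i,x,x'$ exist, I would use convexity to place the whole segment $y(t)=(1-t)x+tx'$, $t\in[0,1]$, inside $\Sigma$, and read off the $i$th coordinate of the AVE along it:
$$
|y_i(t)|=(Ay(t))_i-b_i,\qquad t\in[0,1].
$$
The right-hand side is affine in $t$, whereas $y_i(t)=x_i+t(x'_i-x_i)$ crosses zero with nonzero slope $x'_i-x_i<0$, so the left-hand side has a genuine kink at $t^*=x_i/(x_i-x'_i)\in(0,1)$. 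A function agreeing with an affine function on a neighborhood of $t^*$ cannot have a corner there, giving the desired contradiction.

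The main obstacle, I expect, is not any delicate estimate but rather getting the logical reduction in the converse exactly right: translating ``not located in one orthant'' into the clean coordinatewise witness ($x_i>0$ and $x'_i<0$ for a single $i$), and confirming that the sign crossing of $y_i$ is \emph{strict} so that the kink is real. The affine-versus-piecewise-linear contradiction is then immediate, and it is worth emphasizing that the whole argument uses only that $|x|$ is nonsmooth precisely across the coordinate hyperplanes, which is exactly what forces a convex $\Sigma$ to respect orthant boundaries.
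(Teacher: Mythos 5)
Your proof is correct. Note that the survey itself does not prove Proposition~\ref{propConvOrth}; it only states the result and attributes it to the reference \cite{Hla2023a}, so there is no in-paper argument to compare against. Your two directions are both sound: the forward implication correctly combines the containment $\Sigma\subseteq\{x\mmid \diag(s)x\geq0\}$ with the orthant decomposition of Section~\ref{SolutionSet} to write $\Sigma$ as a single convex polyhedron, and the converse correctly reduces ``not in one orthant'' to the existence of an index $i$ and solutions $x,x'\in\Sigma$ with $x_i>0>x'_i$, after which the contradiction is exactly as you say: along the segment $y(t)$, the identity $|y_i(t)|=(Ay(t))_i-b_i$ forces the piecewise-linear function $t\mapsto|y_i(t)|$, which has a genuine kink at $t^*=x_i/(x_i-x'_i)\in(0,1)$ because the one-sided slopes are $x'_i-x_i<0$ and $x_i-x'_i>0$, to agree with an affine function of $t$. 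This is the natural argument for this statement (and almost certainly the one in the cited source), so your proposal stands as a complete, self-contained proof.
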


The problem of checking convexity if $\Sigma$ is also intractable; it is NP-hard on the class with $b=0$ and $A$ having rank one.

Connectedness of the solution set is harder to characterize; no characterisation is known, only some sufficient conditions. Also, the computational complexity is not classified.

%%%%%%%%%%%%%%%%%%%%%%%%%%%%%%%%%%%%%%%%%%%
\subsection{Cayley-Like Transform}

\emph{The Cayley transform}~\cite{FalTsa2002} of a matrix $A\in\R^{n\times n}$, where $I_n+A$ is nonsingular, is defined to be 
\begin{align*}
    \mathcal{C}(A)=(I_n+A)^{-1}(I_n-A).
\end{align*}
The transformations between the AVE and the LCP from Section~\ref{RAVELCP} resemble the Cayley transform. Indeed, there is a strong connection. Denote
\begin{align*}
 \mathcal{AL}(A)=(A+I_n)^{-1}(A-I_n)
\end{align*}
the transformation of the matrix from reduction AVE$\to$LCP and similarly denote
\begin{align*}
% \mathcal{LA}(Q)=(Q+I_n)(I_n-Q)^{-1}
 \mathcal{LA}(Q)=(I_n-Q)^{-1}(I_n+Q)
\end{align*}
the transformation of the matrix from reduction LCP$\to$AVE. The connection to the Cayley transform is given in the following observation.

\begin{proposition}
We have $\mathcal{AL}(A)=-\mathcal{C}(A)$ and $\mathcal{LA}(Q)=\mathcal{C}(-Q)$.
\end{proposition}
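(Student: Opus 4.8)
The plan is to prove the two claimed identities
$\mathcal{AL}(A)=-\mathcal{C}(A)$ and $\mathcal{LA}(Q)=\mathcal{C}(-Q)$
by pure algebraic manipulation, substituting the definitions and matching them against the Cayley transform $\mathcal{C}(M)=(I_n+M)^{-1}(I_n-M)$. The statement is an observation rather than a deep theorem, so I expect no genuine obstacle---the work is to rewrite each side and confirm the factors agree. The only point requiring a moment of care is the handling of signs inside the matrix inverses, since $\mathcal{C}$ has $I_n+M$ in the inverted factor whereas $\mathcal{AL}$ has $A+I_n$ (same thing) but $\mathcal{LA}$ has $I_n-Q$, which matches $\mathcal{C}(-Q)$ only after replacing $M$ by $-Q$.

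\begin{proof}
For the first identity, substitute $M=A$ into the Cayley transform to get
\begin{align*}
\mathcal{C}(A)=(I_n+A)^{-1}(I_n-A).
\end{align*}
By definition $\mathcal{AL}(A)=(A+I_n)^{-1}(A-I_n)$. Since $A+I_n=I_n+A$ and $A-I_n=-(I_n-A)$, we obtain
\begin{align*}
\mathcal{AL}(A)=(I_n+A)^{-1}\bigl(-(I_n-A)\bigr)=-(I_n+A)^{-1}(I_n-A)=-\mathcal{C}(A),
\end{align*}
where we used that scalar multiplication by $-1$ commutes with the matrix product. This establishes $\mathcal{AL}(A)=-\mathcal{C}(A)$.

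For the second identity, substitute $M=-Q$ into the Cayley transform to get
\begin{align*}
\mathcal{C}(-Q)=(I_n-Q)^{-1}(I_n+Q).
\end{align*}
By definition $\mathcal{LA}(Q)=(I_n-Q)^{-1}(I_n+Q)$, which is literally the same expression. Hence $\mathcal{LA}(Q)=\mathcal{C}(-Q)$, completing the proof.
\end{proof}

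The main thing to verify along the way is simply that $I_n+A$ being nonsingular (the standing hypothesis for $\mathcal{C}(A)$) coincides with the nonsingularity of $A+I_n$ assumed in the reduction AVE$\to$LCP, and likewise that the nonsingularity of $I_n-Q$ assumed for $\mathcal{LA}$ matches the requirement $I_n+(-Q)$ nonsingular for $\mathcal{C}(-Q)$; both are automatic, so the identities hold precisely on the domains where all the transforms are defined.
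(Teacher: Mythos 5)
Your proof is correct, and for the first identity it coincides with the paper's (the paper simply declares that equality ``obvious''). For the second identity, however, the paper's own proof takes a slightly longer route than yours: it establishes the chain
$\mathcal{C}(-Q)=(I_n-Q)^{-1}(I_n+Q)=2(I_n-Q)^{-1}-I_n=(I_n+Q)(I_n-Q)^{-1}=\mathcal{LA}(Q)$,
i.e., it proves in passing that the two factors \emph{commute}, so that the inverse may be written on either side of $(I_n+Q)$. The reason is that the LCP$\to$AVE reduction is often stated (and was stated in an earlier formulation within the paper) with the matrix $(I_n+Q)(I_n-Q)^{-1}$, inverse on the right; the commutation step makes the proposition insensitive to that choice. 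Your argument, which observes that $\mathcal{C}(-Q)$ and $\mathcal{LA}(Q)$ are \emph{literally} the same expression under the definition $\mathcal{LA}(Q)=(I_n-Q)^{-1}(I_n+Q)$ as printed, is perfectly valid for the statement as given---it is shorter, but it buys slightly less: it would not immediately cover the right-inverse variant, whereas the paper's chain handles both orderings at once. Your closing remark about matching domains of definition (nonsingularity of $I_n+A$, resp.\ $I_n-Q$) is a sensible check that the paper leaves implicit.
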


\begin{comment}
\begin{proof}
The first equality is obvious. To show the second one, write
\begin{align*}
%-\mathcal{C}(-Q)
% &=(Q-I_n)^{-1}(I_n+Q) 
% =I_n+2(Q-I_n)^{-1}\\
% &=(I_n+Q)(Q-I_n)^{-1}
% =\mathcal{LA}(Q).
 \mathcal{C}(-Q)
 &=(I_n-Q)^{-1}(I_n+Q) 
 =2(I_n-Q)^{-1}-I_n\\
 &=(I_n+Q)(I_n-Q)^{-1}
 =\mathcal{LA}(Q).
\qedhere
\end{align*}
\end{proof}
\end{comment}

Fallat and Tsatsomeros~\cite{FalTsa2002} investigated the Cayley transform of positivity classes in the context of LCP. Properties of the transform $\mathcal{LA}$ and $\mathcal{AL}$ have not been studied so far, but they have the potential to derive new results on the solvability of the AVE and to show relations of the AVE and certain matrix classes of the LCP. 

%To illustrate it, we present a novel result on unique solvability of AVE.
To illustrate it, we present several results on the uniqueness of the AVE solutions. Proposition~\ref{propCayleyALMmat} is a corollary of Proposition~\ref{thmAveUnSolSuffHmat}, but we prove it in another way by utilizing the above transforms. Proposition~\ref{propCayleyALPmat} weakens the assumption and also the statement. Proposition~\ref{propCayleyALInvMmat} is new.

\begin{proposition}\label{propCayleyALMmat}
If $A-I_n$ is an M-matrix, then $\mathcal{AL}(A)$ is an M-matrix, and therefore the AVE is uniquely solvable for each $b\in\R^n$.
\end{proposition}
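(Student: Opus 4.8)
The plan is to work through the substitution $M := A - I_n$, which by hypothesis is an M-matrix, and to exploit a resolvent form of the transform. Since $A + I_n = M + 2I_n$, we have $\mathcal{AL}(A) = (A+I_n)^{-1}(A-I_n) = (M+2I_n)^{-1}M$, and the first step I would take is to rewrite this as
$$
\mathcal{AL}(A) = (M+2I_n)^{-1}M = I_n - 2(M+2I_n)^{-1}.
$$
This separates the diagonal shift from the off-diagonal structure and makes the two defining M-matrix conditions (nonpositive off-diagonals and nonnegative inverse) transparent.

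The key observation is that shifting an M-matrix by a nonnegative multiple of the identity preserves the M-matrix property: writing $M = sI_n - B$ with $B \geq 0$ and $s > \rho(B)$, we get $M + 2I_n = (s+2)I_n - B$ with $s+2 > \rho(B)$, so $M+2I_n$ is again a (nonsingular) M-matrix, and in particular $(M+2I_n)^{-1} \geq 0$. Combined with the resolvent identity, each off-diagonal entry of $\mathcal{AL}(A)$ equals $-2\,[(M+2I_n)^{-1}]_{ij} \leq 0$ for $i\neq j$, so $\mathcal{AL}(A)$ is a $Z$-matrix with the correct sign pattern.

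Next I would verify nonnegativity of the inverse via the dual identity
$$
(\mathcal{AL}(A))^{-1} = M^{-1}(M+2I_n) = I_n + 2M^{-1} \geq I_n \geq 0,
$$
which uses only $M^{-1} \geq 0$. Since both $M$ and $M+2I_n$ are nonsingular, $\mathcal{AL}(A)$ is nonsingular, and together with the sign condition this certifies that $\mathcal{AL}(A)$ is an M-matrix. For the unique solvability conclusion, I would invoke that M-matrices are P-matrices: the reduction AVE $\to$ LCP from Section~\ref{RAVELCP} applies (as $A+I_n = M+2I_n$ is nonsingular) and produces an LCP whose matrix is exactly $\mathcal{AL}(A)$; a P-matrix LCP has a unique solution for each right-hand side, and the bijection $x \mapsto (x^+,x^-)$ transfers this to unique AVE solvability for every $b\in\R^n$.

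The only delicate point — the main obstacle — is the bookkeeping around the M-matrix definition: confirming that the shift $M+2I_n$ remains an M-matrix and that the two resolvent identities are correctly set up. Once these are in place, both sign conditions are immediate, and the rest follows from standard facts about P-matrices and the AVE--LCP equivalence.
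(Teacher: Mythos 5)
Your proposal is correct and follows essentially the same route as the paper's proof: both rest on the two resolvent identities $\mathcal{AL}(A)=I_n-2(A+I_n)^{-1}$ (giving the Z-matrix sign pattern, since $A+I_n$ is again an M-matrix) and $\mathcal{AL}(A)^{-1}=I_n+2(A-I_n)^{-1}\geq 0$ (giving the nonnegative inverse). You merely make explicit two steps the paper leaves implicit --- the shift argument showing $M+2I_n$ stays an M-matrix, and the concluding M-matrix $\Rightarrow$ P-matrix $\Rightarrow$ unique LCP/AVE solvability chain --- which is fine.
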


\begin{proof}
We have to show that $\mathcal{AL}(A)$ has a nonnegative inverse and is a Z-matrix, i.e., the off-diagonal entries are nonpositive. The former follows from the fact that $(A-I_n)^{-1}\geq0$ and thus
$$
\mathcal{AL}(A)^{-1}
 =(A-I_n)^{-1}(A+I_n)
 =I_n+2(A-I_n)^{-1}\geq0.
$$ 
To show the latter, write 
$$
\mathcal{AL}(A)
 =(A+I_n)^{-1}(A-I_n)
 =I_n-2(A+I_n)^{-1}.
$$ 
It is a Z-matrix since $A+I_n$ is an M-matrix and hence has a nonnegative inverse.
\end{proof}

Recall that a nonsingular matrix is an \emph{inverse M-matrix} if its inverse is an M-matrix~\cite{JohSmi2011}. Inverse M-matrices form a subset of P-matrices.

\begin{proposition}\label{propCayleyALPmat}
If $A-I_n$ in a P-matrix, then $\mathcal{AL}(A)$ is a P-matrix, and therefore the AVE is uniquely solvable for each $b\in\R^n$.
\end{proposition}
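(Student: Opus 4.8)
The plan is to follow the template of the proof of Proposition~\ref{propCayleyALMmat}: work with the inverse of $\mathcal{AL}(A)$ and exploit the closure properties of the class of P-matrices. Setting $M\coloneqq A-I_n$, so that $A+I_n=M+2I_n$, I first record the algebraic identity
$$
\mathcal{AL}(A)^{-1}=(A-I_n)^{-1}(A+I_n)=M^{-1}(M+2I_n)=I_n+2M^{-1}.
$$
Since the inverse of a P-matrix is again a P-matrix, it suffices to prove that $I_n+2M^{-1}$ is a P-matrix; the conclusion that $\mathcal{AL}(A)$ itself is a P-matrix then follows by one more inversion.

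To establish that $I_n+2M^{-1}$ is a P-matrix I would chain three standard facts. By hypothesis $M=A-I_n$ is a P-matrix, so $M^{-1}$ is a P-matrix, and hence so is the positive scalar multiple $2M^{-1}$. The remaining step is the claim that adding a nonnegative diagonal matrix to a P-matrix leaves it a P-matrix; applied to $2M^{-1}+I_n$, this yields the result. This is the only point that genuinely needs an argument, and I would prove it through the characterization that $P$ is a P-matrix iff $\max_i x_i(Px)_i>0$ for every $x\neq0$: for a nonnegative diagonal $D=\diag(d)$ and any $x\neq0$ we have $x_i((P+D)x)_i=x_i(Px)_i+d_ix_i^2\geq x_i(Px)_i$, so the index realizing $x_i(Px)_i>0$ also witnesses positivity for $P+D$. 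The same closure fact shows that $A+I_n=M+2I_n$ is a P-matrix, hence nonsingular, so $\mathcal{AL}(A)$ is well defined in the first place.

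For the final assertion I would invoke the reduction AVE$\to$LCP of Section~\ref{RAVELCP}: as $A+I_n$ is nonsingular, the AVE is equivalent to the LCP with matrix $\mathcal{AL}(A)$, and the LCP right-hand side $-(A+I_n)^{-1}b$ ranges over all of $\R^n$ as $b$ does. Since $\mathcal{AL}(A)$ is a P-matrix, the corresponding LCP has a unique solution for every right-hand side, and carrying this back through the bijection between AVE and LCP solutions gives a unique AVE solution for each $b$. I anticipate no serious obstacle here; the only slightly delicate ingredient is the nonnegative-diagonal closure property, and everything else is routine algebra together with the standard closure of P-matrices under inversion and positive scaling.
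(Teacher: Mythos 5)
Your proof is correct and takes essentially the same route as the paper's: both reduce the claim to showing $\mathcal{AL}(A)^{-1}=I_n+2(A-I_n)^{-1}$ is a P-matrix, using closure of P-matrices under inversion, positive scalar multiples, and addition of a nonnegative diagonal matrix. You merely spell out details the paper leaves implicit (the sign-characterization argument for the diagonal-addition closure, the nonsingularity of $A+I_n$, and the AVE--LCP transfer via the Samelson--Thrall--Wesler property of P-matrices), all of which are sound.
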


\begin{proof}
P-matrices are closed under inversion, so it is sufficient to show that $\mathcal{AL}(A)^{-1}$ is a P-matrix. This follows from the expression
$$
\mathcal{AL}(A)^{-1}
 =(A-I_n)^{-1}(A+I_n)
 =I_n+2(A-I_n)^{-1},
$$ 
since P-matrices are also closed under positive multiples and the addition of a nonnegative diagonal matrix.
\end{proof}

\begin{proposition}\label{propCayleyALInvMmat}
If $A-I_n$ is an inverse M-matrix, then $\mathcal{AL}(A)$ is an inverse M-matrix, and therefore the AVE is uniquely solvable for each $b\in\R^n$.
\end{proposition}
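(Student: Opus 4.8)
The plan is to follow the pattern of the two preceding propositions and reduce the statement about $\mathcal{AL}(A)$ to a statement about its inverse. Writing $M:=(A-I_n)^{-1}$, which is an M-matrix by the definition of an inverse M-matrix, the key identity is
\[
(A-I_n)^{-1}(A+I_n)=(A-I_n)^{-1}\bigl[(A-I_n)+2I_n\bigr]=I_n+2M.
\]
Thus the whole proof reduces to showing that $I_n+2M$ is a nonsingular M-matrix; once this is done, $A+I_n=M^{-1}(I_n+2M)$ is nonsingular (so $\mathcal{AL}(A)$ is well defined), and the displayed identity reads $\mathcal{AL}(A)^{-1}=I_n+2M$, i.e.\ $\mathcal{AL}(A)$ is an inverse M-matrix.

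To prove that $I_n+2M$ is an M-matrix I would check the two defining properties separately. First, the Z-matrix property is immediate: for $i\ne j$ the off-diagonal entry is $2M_{ij}\le0$, since the off-diagonal entries of an M-matrix are nonpositive. Second, for the nonnegativity of the inverse I would avoid entrywise manipulation and instead use a structural characterization of M-matrices. The cleanest is the splitting $M=sI_n-N$ with $N\ge0$ and $s>\rho(N)$, valid for every nonsingular M-matrix: substituting gives $I_n+2M=(1+2s)I_n-2N$ with $2N\ge0$ and $1+2s>2s>2\rho(N)=\rho(2N)$, so $I_n+2M$ has the same form and is a nonsingular M-matrix. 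Equivalently, one may use that a Z-matrix is a nonsingular M-matrix iff it is positive stable, together with the fact that the eigenvalues of $I_n+2M$ are $1+2\lambda$ for $\lambda$ in the spectrum of $M$, whence $\operatorname{Re}(1+2\lambda)>0$.

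Finally, the unique solvability of the AVE is inherited for free: inverse M-matrices form a subset of the P-matrices, so the claim follows directly from Proposition~\ref{propCayleyALPmat}.

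I do not expect a serious obstacle; the single point that needs care is the nonnegativity of $(I_n+2M)^{-1}$, where one must pass through a structural characterization of M-matrices (the $sI_n-N$ splitting or positive stability) rather than trying to compute $(I_n+2M)^{-1}$ from $M^{-1}$ directly, since that inverse is not expressible entrywise in terms of $M^{-1}$.
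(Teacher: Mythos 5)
Your proposal is correct and takes essentially the same route as the paper's proof: both rest on the identity $\mathcal{AL}(A)^{-1}=(A-I_n)^{-1}(A+I_n)=I_n+2(A-I_n)^{-1}$, the observation that this matrix is again a nonsingular M-matrix, and the inclusion of inverse M-matrices in the P-matrices to get unique solvability. The only difference is one of detail: the paper simply asserts that $I_n+2(A-I_n)^{-1}$ is an M-matrix, whereas you justify it via the splitting $M=sI_n-N$ with $s>\rho(N)$ (and also verify that $A+I_n$ is nonsingular so that $\mathcal{AL}(A)$ is well defined), which is a harmless and indeed welcome elaboration.
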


\begin{proof}
Since $(A-I_n)^{-1}$ is an M-matrix, the matrix
$$
\mathcal{AL}(A)^{-1}
 =(A-I_n)^{-1}(A+I_n)
 =I_n+2(A-I_n)^{-1}
$$ 
is an M-matrix, too. This means that $\mathcal{AL}(A)$ is an inverse M-matrix. Therefore, it is also a P-matrix, and the corresponding LCP has a unique solution for any right-hand side vector.
\end{proof}

From the above results, it might seem that unique solvability for each $b\in\R^n$ (i.e., regularity of $[A-I_n,A+I_n]$) can be achieved if $A-I_n$ is positive stable, that is, the real part of all eigenvalues of $A$ is greater than~1. Nevertheless, this is not true in general. As a counterexample, consider the matrix
$$
A=\begin{pmatrix}
-1&1.5\\-4&3.5
\end{pmatrix}.
$$
The eigenvalues of $A$ are $1.25\pm 0.9682\,i$, but the interval matrix $[A-I_n,A+I_n]$ is not regular since it contains the singular matrix
$$
A=\begin{pmatrix}
-1.5&1.5\\-4&4
\end{pmatrix}.
$$

%%%%%%%%%%%%%%%%%%%%%%%%%%%%%%%%%%%%%%%%%%

%%%%%%%%%%%%%%%%%%%%%%%%%%%%%%%%%%%%%%%%%%%
\section{Generalized Absolute Value Equation}\label{ssGAVE}

%{\mh[Section about GAVE and other generalizations I suggest not to go much in details here and just present the extensions of AVE.]} {\hm [I will remove the subsection 3.1.]}

The AVE can be extended by various means. First, we are concerned with the so-called generalized absolute value equation (GAVE), which have the form
\begin{equation}\label{Eq1}
 Ax - B|x| = b,
\end{equation}
where $ A,B\in{\R}^{m\times n}$ and $b\in{\R}^m $.
If $B$ is nonsingular, then we easily transform it to the AVE
\begin{align*}
B^{-1}Ax-|x| = B^{-1}b.
\end{align*}
However, to avoid matrix inversion and handle the singular case, we analyse the form of the GAVE directly.

Notice that there is another reduction of the GAVE to the AVE that proceeds as follows. Write GAVE \eqref{Eq1} as
$$
Ax-By=b,\ y=|x|,\ y=|y|.
$$
By inserting a dummy variable $z$, we arrive at an equivalent system
$$
Ax+By-|z|=b,\ y=|x|,\ y=|y|+z,
$$
%It is an equivalent systems since $y=|x|$ implies $z=y-|y|=y-y=0$. Now, 
which is easily converted to the canonical form of AVE \eqref{Eq2}
$$
\begin{pmatrix}0&I_n&0\\0&I_n&-I_n\\A&B&0\end{pmatrix}
\begin{pmatrix}x\\y\\z\end{pmatrix}
-\begin{vmatrix}x\\y\\z\end{vmatrix}
=\begin{pmatrix}0\\0\\b\end{pmatrix}.
$$
This kind of reduction increases the size of the system by the factor~3, but we avoid matrix inversion, and it works with no assumptions on~$B$. Consequently, the reduction shows that the GAVE is equivalent to the LCP; this was already proved by Prokopyev~\cite{prokopyev2009equivalent} by other means.

%%%
\paragraph{Unique Solvability.}
Conditions for solvability and unsolvability for the GAVE are more-or-less extensions of those for the AVE.
For instance, linear programming based unsolvability condition \eqref{aveRelaxDual} was extended to the GAVE by Prokopyev~\cite{prokopyev2009equivalent}.
The following generalizations of Theorems~\ref{thmAveUnSol} and~\ref{thmAveUnSolSuff} to the GAVE are credited to Wu and Shen~\cite{wu2020unique}; some additional conditions were proposed in~\cite{mezzadri2020solution}. An overview of necessary and sufficient conditions is provided in~\cite{KumDee2023u}.

\begin{theorem}\label{thmgg}
The GAVE \eqref{Eq1} has a unique solution for any $b\in{\R}^n$ if and only if matrix 	$A + BD$ is nonsingular for every $D\in[-I_n,I_n]$.
\end{theorem}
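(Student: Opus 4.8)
The plan is to analyse the piecewise-linear map $F(x)=Ax-B|x|$ and to reinterpret the claim as ``$F$ is a bijection of $\R^n$''. Using $|x|=D(x)x$, the map $F$ is linear on each orthant $O_s=\{x\in\R^n\mmid\diag(s)x\ge0\}$, with matrix $M_s:=A-B\diag(s)$; note that as $s$ ranges over $\{\pm1\}^n$ the matrices $M_s$ coincide as a set with the vertex matrices $A+B\diag(s)$. A preliminary linear-algebra lemma reduces the interval hypothesis to a condition on these finitely many vertices: \emph{$A+BD$ is nonsingular for every $D\in[-I_n,I_n]$ if and only if every $M_s$ is nonsingular and all determinants $\det M_s$ have the same sign.} Since the $j$th column of $A+BD$ equals $A_{*j}+d_jB_{*j}$, the function $g(D):=\det(A+BD)$ is affine in each diagonal entry $d_j$ separately; writing $g$ as the convex combination $\tfrac{1+d_j}{2}g|_{d_j=1}+\tfrac{1-d_j}{2}g|_{d_j=-1}$ and inducting on the number of variables shows that a multiaffine function that is nonzero with a constant sign at all $2^n$ vertices cannot vanish anywhere on the box $[-I_n,I_n]$. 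The reverse implication is immediate, since $g$ is continuous and nonvanishing on the connected box, hence of constant sign.

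For the sufficiency direction, assume $A+BD$ is nonsingular for all $D\in[-I_n,I_n]$. Injectivity of $F$ follows from the reverse triangle inequality: if $F(x)=F(y)$ then $A(x-y)=B(|x|-|y|)$, and setting $\tilde D_{ii}=(|x_i|-|y_i|)/(x_i-y_i)\in[-1,1]$ when $x_i\ne y_i$ and $\tilde D_{ii}=0$ otherwise gives $|x|-|y|=\tilde D(x-y)$ with $\tilde D\in[-I_n,I_n]$; hence $(A-B\tilde D)(x-y)=0$, and as $A-B\tilde D=A+B(-\tilde D)$ is nonsingular, $x=y$. For surjectivity, each $M_s$ is nonsingular (a vertex case), so $c:=\min_{s}\sigma_{\min}(M_s)>0$ and $\|F(x)\|\ge c\|x\|$ for every $x$; thus $F$ is coercive, hence proper, and $F(\R^n)$ is closed. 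By Brouwer's invariance of domain the injective continuous map $F$ is open, so $F(\R^n)$ is also open; connectedness of $\R^n$ then forces $F(\R^n)=\R^n$. Hence $F$ is a bijection and the GAVE has a unique solution for every $b$.

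For the necessity direction, suppose the GAVE has a unique solution for each $b$, i.e.\ $F$ is a continuous bijection; by invariance of domain it is a homeomorphism. Two facts must be extracted. First, each $M_s$ is nonsingular: the interior of $O_s$ is a nonempty open set on which $F$ agrees with the linear map $M_s$, and since a homeomorphism is an open map, $M_s$ would have to send an open set to an open set, which is impossible for a singular matrix. Second, the determinants must share one sign: a homeomorphism of $\R^n$ is proper with Brouwer degree constantly equal to $\pm1$, and at a regular value whose unique preimage lies in $\mathrm{int}\,O_s$ this degree equals $\sgn\det M_s$; such regular values exist for every $s$ because the nonsingular $M_s$ maps the open cone $\mathrm{int}\,O_s$ onto an open set, so all the $\sgn\det M_s$ coincide. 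Coherent orientation is thereby established, and the preliminary lemma upgrades it to nonsingularity of $A+BD$ on the whole interval.

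I expect the sign-consistency step to be the main obstacle. Nonsingularity of each individual piece is \emph{not} sufficient, and a purely algebraic argument seems hard: a kernel vector of a singular intermediate matrix $A+BD_0$ need not lie in the matching orthant, so one cannot simply read off two solutions of $F(x)=b$. Bridging this gap is exactly where the global topological degree — equivalently, the theory of coherently oriented piecewise-affine homeomorphisms subordinate to the orthant fan — is needed, whereas the forward (sufficiency) implication is comparatively routine.
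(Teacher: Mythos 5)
Your proof is correct, but there is nothing in the paper to compare it against line by line: the paper is a survey and states Theorem~\ref{thmgg} without proof, crediting it to Wu and Shen~\cite{wu2020unique}. The route taken in that literature (and in the related work~\cite{mezzadri2020solution}) is algebraic rather than topological: the GAVE is converted to a (horizontal) linear complementarity problem, and unique solvability for every right-hand side is read off from complementarity theory --- the P-matrix property of $(A+B)^{-1}(A-B)$ when $A+B$ is invertible, or in general the column $\mathcal{W}$-property of the pair $\{A-B,\,A+B\}$ in the sense of Sznajder and Gowda, whose column representative matrices are exactly your vertex matrices $A+B\diag(s)$. Your argument replaces that machinery with a self-contained piecewise-linear/topological one: injectivity from the factorization $|x|-|y|=\tilde{D}(x-y)$ with $\tilde{D}\in[-I_n,I_n]$, surjectivity from coercivity plus invariance of domain, necessity of piecewise nonsingularity and coherent orientation via Brouwer degree, and the multiaffine interpolation of $D\mapsto\det(A+BD)$ to pass between the $2^n$ vertices and the full box. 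What each approach buys: the LCP route gets the hard (necessity) direction from known complementarity theorems, whereas yours is independent of LCP theory and makes transparent why a finite vertex test suffices --- indeed your preliminary lemma is precisely the corollary the paper states right after the theorem (unique solvability iff $\det(A+B\diag(s))$ has a fixed nonzero sign over $s\in\{\pm1\}^n$), which the paper likewise deduces from column-wise linearity of the determinant.

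One clarification on your closing remark. Your caution is well placed: for piecewise affine maps subordinate to a general polyhedral fan, coherent orientation of the pieces does \emph{not} imply bijectivity (in the plane one can wrap around the origin twice with all pieces orientation-preserving), and likewise the naive ``kernel vector of a singular $A+BD_0$ gives two solutions'' argument genuinely fails because the kernel vector need not lie in a compatible orthant. Your proof avoids both traps: sufficiency is derived from the box condition (which is exactly what the $\tilde{D}$-injectivity argument consumes), and the multiaffine lemma is what converts the vertex sign condition back into the box condition, so the cycle of implications closes without ever invoking the false general statement.
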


We cannot write the set of matrices $A + BD$, $D\in[-I_n,I_n]$, directly $A + B[-I_n,I_n]$ since this interval matrix evaluated by interval arithmetic yields
$A + B[-I_n,I_n]=[A-|B|,A+|B|]$, which overestimates the true set. However, this matrix is also useful; we will meet it again around Theorem~\ref{thmalternatives} and Algorithm~\ref{algSignancord}.
%Theorem~\ref{thmalternatives} of the alternatives and the sign accord Algorithm~\ref{algSignancord}.

We can still formulate unique solvability by means of a standard interval matrix. By \cite{KumDee2023u}, it is equivalent to regularity of the interval matrix
\begin{align}\label{intMcDouble}
\begin{pmatrix}A&B\\{}[-I_n,I_n]&I_n\end{pmatrix}.
\end{align}
Standard characterization of regularity~\cite{Roh2009} yields condition~\eqref{itCorIntMcDouble2} of Theorem~\ref{thmGaveCharT}. Condition~\eqref{itCorIntMcDouble3} was derived directly in~\cite{KumDee2023u}.

\begin{theorem}\label{thmGaveCharT}
The following conditions are equivalent:
\begin{enumerate}[(i)]
%\item 
%The AVE has a unique solution for each $b\in\R^n$;
\item\label{itCorIntMcDouble1}
Interval matrix \eqref{intMcDouble} is regular;
\item\label{itCorIntMcDouble2}
the system $Ax+By=0$, $|y|\leq|x|$ has only the trivial solution $x=y=0$;
\item\label{itCorIntMcDouble3}
the system
$
|A^\top x| \leq  |B^\top x| 
$
has only the trivial solution $x=0$.
\end{enumerate}
\end{theorem}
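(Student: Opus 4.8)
The plan is to deduce both nontrivial equivalences from a single classical characterization of regularity due to Rohn~\cite{Roh2009}: an interval matrix $[\Mid{M}-\Rad{M},\Mid{M}+\Rad{M}]$ is regular if and only if the inequality $|\Mid{M}w|\le\Rad{M}|w|$ has only the trivial solution $w=0$. This is exactly the criterion that produces item~(iii) of the Corollary in Section~\ref{UniqueSolve} (there with center $A$ and radius $I_n$). I would apply it once to the interval matrix \eqref{intMcDouble} to obtain \eqref{itCorIntMcDouble2}, and once to its transpose to obtain \eqref{itCorIntMcDouble3}.

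First I would record the center and radius of \eqref{intMcDouble},
$$
\Mid{M}=\begin{pmatrix}A&B\\0&I_n\end{pmatrix},\qquad
\Rad{M}=\begin{pmatrix}0&0\\I_n&0\end{pmatrix},
$$
so that a generic member is $\begin{pmatrix}A&B\\D&I_n\end{pmatrix}$ with $D\in[-I_n,I_n]$. Writing $w=(x,y)$ and computing
$$
\Mid{M}w=\begin{pmatrix}Ax+By\\y\end{pmatrix},\qquad
\Rad{M}|w|=\begin{pmatrix}0\\|x|\end{pmatrix},
$$
the inequality $|\Mid{M}w|\le\Rad{M}|w|$ reads $|Ax+By|\le0$ together with $|y|\le|x|$, i.e.\ precisely the system $Ax+By=0$, $|y|\le|x|$. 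Hence \eqref{intMcDouble} is regular iff this system has only the trivial solution, which is \eqref{itCorIntMcDouble2}.

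For \eqref{itCorIntMcDouble3} I would use that an interval matrix is regular if and only if its entrywise transpose is, since $\det M=\det M^\top$ and the transposed interval matrix contains exactly the transposes of the members of \eqref{intMcDouble}. The transposed center and radius are $(\Mid{M})^\top=\begin{pmatrix}A^\top&0\\B^\top&I_n\end{pmatrix}$ and $(\Rad{M})^\top=\begin{pmatrix}0&I_n\\0&0\end{pmatrix}$, so the transposed interval matrix is $\begin{pmatrix}A^\top&[-I_n,I_n]\\B^\top&I_n\end{pmatrix}$. Applying the same Rohn criterion with $w=(u,v)$ gives $|A^\top u|\le|v|$ and $B^\top u+v=0$; eliminating $v=-B^\top u$ turns this into $|A^\top u|\le|B^\top u|$. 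Thus regularity is also equivalent to \eqref{itCorIntMcDouble3}, closing the chain \eqref{itCorIntMcDouble1} $\Leftrightarrow$ \eqref{itCorIntMcDouble2} $\Leftrightarrow$ \eqref{itCorIntMcDouble3}.

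The routine part is the block bookkeeping for $\Mid{M}$, $\Rad{M}$ and the two matrix–vector products. The one point demanding care is the transpose step: one must verify that the bottom-left block $[-I_n,I_n]$ consists only of \emph{diagonal} matrices (radius $I_n$ forces the off-diagonal entries to vanish, leaving diagonal entries in $[-1,1]$), so that transposition carries \eqref{intMcDouble} exactly onto the claimed transposed interval matrix rather than onto a strictly larger set. Once this is confirmed, both nontrivial equivalences follow immediately from the single cited regularity characterization.
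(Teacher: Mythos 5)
Your proof is correct, and the block computations check out. For the equivalence (i) $\Leftrightarrow$ (ii) you follow essentially the same route as the paper: the paper states that the ``standard characterization of regularity'' from Rohn's survey \cite{Roh2009} yields condition (ii), and that characterization is precisely the criterion you invoke (regularity of $[\Mid{M}-\Rad{M},\Mid{M}+\Rad{M}]$ iff $|\Mid{M}w|\le\Rad{M}|w|$ forces $w=0$) applied to the block matrix \eqref{intMcDouble}; your identification of $\Mid{M}$, $\Rad{M}$ and the resulting system $Ax+By=0$, $|y|\le|x|$ is exactly what that citation compresses. Where you genuinely diverge is condition (iii): the paper gives no argument for it at all, deferring to a direct derivation in \cite{KumDee2023u}, whereas you obtain it by applying the same regularity criterion to the entrywise transpose of \eqref{intMcDouble}, using $\det M=\det M^{\top}$ to transfer regularity, and then eliminating $v=-B^{\top}u$. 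This makes your proof self-contained where the paper's is not, and it has the aesthetic advantage of deriving both (ii) and (iii) from a single cited lemma. One small remark: the step you flag as the delicate one --- that transposition carries \eqref{intMcDouble} exactly onto the interval matrix with transposed center and radius --- is in fact automatic for \emph{any} interval matrix, since the membership condition $|M-\Mid{M}|\le\Rad{M}$ is entrywise and therefore transposes exactly; the diagonality of the matrices in $[-I_n,I_n]$ is true but not needed for that step.
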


Due to the linearity of the determinant in each column, we obtain from Theorem~\ref{thmgg} a finite characterization of the unique solvability of the GAVE.

\begin{corollary}
The GAVE has a unique solution for any $b\in{\R}^n$ if and only if the determinant of $A + B\diag(s)$ is either positive or negative for each $s\in\{\pm1\}^n$.
\end{corollary}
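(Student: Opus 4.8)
The plan is to derive this corollary directly from Theorem~\ref{thmgg}, which states that the GAVE has a unique solution for each $b$ if and only if $A+BD$ is nonsingular for every $D\in[-I_n,I_n]$. The key observation is that nonsingularity for the whole continuum of diagonal matrices $D$ can be captured by finitely many sign patterns, because the determinant $\det(A+BD)$ is a \emph{multilinear} function of the diagonal entries $d_1,\dots,d_n$ of $D$. First I would fix all but one diagonal entry and regard $f(d_i)=\det(A+BD)$ as a function of the single variable $d_i\in[-1,1]$. Expanding the determinant along the column in which $d_i$ appears (equivalently, using the cofactor expansion), one sees that $BD$ contributes to the $i$th column only through the term $d_i B_{*i}$, so $\det(A+BD)$ is an affine (degree one) function of $d_i$. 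An affine function on an interval attains its extreme values at the endpoints, so if it were to avoid zero on $[-1,1]$ it must have the same sign at $d_i=-1$ and $d_i=+1$, and conversely constant sign at the endpoints forces no sign change in between.

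Iterating this argument over all $n$ coordinates is the heart of the proof. Because $\det(A+BD)$ is affine in each $d_i$ separately (multilinear overall), its restriction to the box $[-I_n,I_n]$ is a multilinear map, and such a map attains its minimum and maximum over the box at the vertices, i.e.\ at $D=\diag(s)$ with $s\in\{\pm1\}^n$. Hence $\det(A+BD)$ is nonzero for \emph{every} $D\in[-I_n,I_n]$ if and only if it is nonzero at every vertex $\diag(s)$ \emph{and} does not change sign across the box; but by multilinearity and the intermediate value theorem on each edge, the absence of a zero anywhere in the box is equivalent to the determinant having a constant (nonzero) sign at all $2^n$ vertices. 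Thus the condition ``$A+BD$ nonsingular for all $D\in[-I_n,I_n]$'' reduces to ``$\det(A+B\diag(s))$ is either uniformly positive or uniformly negative over all $s\in\{\pm1\}^n$,'' which is exactly the statement to be proved.

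The main obstacle to make fully rigorous is the implication that nonvanishing of a multilinear function on a box is equivalent to constant sign at the vertices. The subtle point is that having the same nonzero sign at all $2^n$ vertices does not a priori forbid the function from dipping to zero in the interior of the box for a general continuous function; here it is precisely multilinearity that saves us. The clean way to argue is by induction on $n$: by the one-variable affine argument above, $\det(A+BD)$ restricted to any edge of the box is monotone (affine) in the free coordinate, so it cannot vanish on that edge if it has the same sign at the two endpoints; then, treating the box as a union of lower-dimensional faces connected by edges and invoking connectedness together with the affine-on-each-edge property, one propagates constancy of sign from the vertices through the whole box. Equivalently, one can note that any $D\in[-I_n,I_n]$ is a convex combination along coordinate directions of vertex matrices, and a function affine in each coordinate that is strictly positive (resp.\ negative) at all vertices stays strictly positive (resp.\ negative) on the convex hull. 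This delivers the equivalence, and combining it with Theorem~\ref{thmgg} completes the proof.
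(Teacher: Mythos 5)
Your proposal is correct and follows essentially the same route as the paper: the paper derives this corollary from Theorem~\ref{thmgg} precisely by invoking the linearity of the determinant in each column, which makes $\det(A+BD)$ multilinear in the diagonal entries of $D$ and hence reduces nonsingularity over the whole box $[-I_n,I_n]$ to constant sign at the $2^n$ vertices $\diag(s)$. Your write-up simply makes explicit the vertex-extremality and sign-propagation details that the paper leaves implicit.
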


The characterization of unique solvability implies a general approach for deriving sufficient conditions.

\begin{proposition}\label{thmggg}
If $A$ in \eqref{Eq1} is nonsingular and satisfies
\begin{equation}\label{solgave}
	\rho(A^{-1}B{D} ) < 1 
\end{equation}
for every $D\in[-I_n,I_n]$, then the GAVE has a unique solution for any $b \in \R^n$.
\end{proposition}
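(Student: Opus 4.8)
The plan is to reduce everything to the characterization already established in Theorem~\ref{thmgg}, which states that the GAVE has a unique solution for every $b\in\R^n$ if and only if $A+BD$ is nonsingular for every $D\in[-I_n,I_n]$. So the entire task becomes: assuming $A$ is nonsingular and $\rho(A^{-1}BD)<1$ for every $D\in[-I_n,I_n]$, verify that each such $A+BD$ is nonsingular. I would therefore fix an arbitrary $D\in[-I_n,I_n]$ and argue nonsingularity of $A+BD$ directly from the spectral hypothesis.

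The key algebraic step is to factor out the nonsingular matrix $A$. Writing $A+BD = A\bigl(I_n+A^{-1}BD\bigr)$ and using that $A$ is nonsingular, the matrix $A+BD$ is nonsingular if and only if $I_n+A^{-1}BD$ is nonsingular. At this point the assumption $\rho(A^{-1}BD)<1$ does all the work: every eigenvalue $\lambda$ of $A^{-1}BD$ satisfies $|\lambda|<1$, so in particular $\lambda\neq-1$, and hence $1+\lambda\neq0$. Since the eigenvalues of $I_n+A^{-1}BD$ are exactly the numbers $1+\lambda$, the determinant $\det(I_n+A^{-1}BD)=\prod_i(1+\lambda_i)$ is nonzero, giving nonsingularity of $I_n+A^{-1}BD$ and therefore of $A+BD$. (Equivalently, one may invoke the Neumann series: $\rho(-A^{-1}BD)<1$ makes $I_n-(-A^{-1}BD)$ invertible.) As $D$ was arbitrary, Theorem~\ref{thmgg} then yields the claimed unique solvability.

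I do not expect a genuine obstacle here: the content of the proposition lies almost entirely in the prior characterization, and the remaining argument is the elementary observation that a spectral radius below $1$ excludes the eigenvalue $-1$. The only point requiring a little care is making the reduction clean, namely that the factorization $A+BD=A(I_n+A^{-1}BD)$ is legitimate precisely because $A$ is assumed nonsingular, and that the conclusion must be drawn for every $D$ in the whole box $[-I_n,I_n]$ rather than for a single $D$, so that the quantifier in the hypothesis matches the quantifier in Theorem~\ref{thmgg}.
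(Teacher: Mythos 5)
Your proof is correct and matches the paper's own route: the paper derives this proposition precisely by writing $A+BD=A(I_n+A^{-1}BD)$ and invoking the characterization of Theorem~\ref{thmgg}, exactly as you do. The spectral-radius observation that $\rho(A^{-1}BD)<1$ excludes the eigenvalue $-1$, hence makes $I_n+A^{-1}BD$ nonsingular for every $D\in[-I_n,I_n]$, is the whole content, and you have it.
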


From the above statement, we immediately have the sufficient condition for unique solvability for an arbitrary right-had side~\cite{rohn2014iterative}
\begin{align}\label{sufCondGaveSolvRho}
	\rho(|A^{-1}B| ) < 1,
\end{align}
which is a generalization of~\eqref{condRegSuff1}. Further, since for each $D\in[-I_n,I_n]$
$$\rho(A^{-1}B {D} ) 
\leq  \| A^{-1}B {D} \|
\leq  \| A^{-1}B\| \cdot \|{D} \|
\leq  \| A^{-1}B\|,
$$
we obtain another sufficient condition~\cite{wu2020unique}
\begin{align}\label{sufCondGaveSolvNorm}
\| A^{-1}B\|<1,
\end{align}
extending the condition \eqref{condRegSuff2} of the AVE. 
An alternative condition avoiding any matrix inversion is~
\cite{wu2019note,mohamed2019unique}
\begin{align}\label{sufCondGaveSolvSigmaBA}
 \sigma_{\max}(B) < \sigma_{\min}(A).
\end{align}
Its drawback is that it is provably weaker since it implies
\begin{align*}
1 > \sigma_{\max}(B) \sigma_{\min}(A)^{-1}
  = \sigma_{\max}(B) \sigma_{\max}(A^{-1})
  \geq  \sigma_{\max}(A^{-1}B)
  = \| A^{-1}B\|.
\end{align*}

Even when the unique solvability condition is satisfied, there is still some effort to compute the solution. Explicit formulae for the solutions exist in very special cases only~\cite{rohn2014class}. 

\begin{theorem}
Let $B \geq 0$, $\rho(B)<1$, and denote $M=(I_n-B)^{-1}$. Suppose that there is $k$ such that $b_{i}\geq 0$ for each $i\neq k$. Then, the absolute value equation
$$x -B | x|= b$$
has a unique solution
$$
x = \max\braces*{Mb,\, Mb-\frac{2(Mb)_k}{2M_{kk}-1}(M-I_n)e_k}.
$$
\end{theorem}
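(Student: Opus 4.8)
The plan is to handle uniqueness and the explicit form separately. Uniqueness is immediate: the equation is a GAVE $Ax-B|x|=b$ with $A=I_n$, so the hypothesis $\rho(B)<1$ is precisely the sufficient condition \eqref{sufCondGaveSolvRho}, namely $\rho(|A^{-1}B|)=\rho(B)<1$, which guarantees a unique solution for every right-hand side. Thus the whole task reduces to verifying that the displayed vector is that solution.

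First I would record the elementary facts about $M$. Since $B\geq0$ and $\rho(B)<1$, the Neumann series gives $M=(I_n-B)^{-1}=\sum_{j\geq0}B^j\geq I_n\geq0$; in particular $M_{kk}\geq1$, so $2M_{kk}-1\geq1>0$, and $(M-I_n)e_k=\sum_{j\geq1}B^je_k\geq0$. Next I would pin down the sign pattern of the unique solution $x^*$. Writing the equation as $x^*=b+B|x^*|$ and using $b_i\geq0$ for $i\neq k$ together with $B\geq0$ and $|x^*|\geq0$, each component with $i\neq k$ satisfies $x^*_i=b_i+(B|x^*|)_i\geq0$. Hence the diagonal sign matrix $D:=\diag(\sgn(x^*))$ agrees with $I_n$ except possibly in its $k$-th diagonal entry; it is therefore either $I_n$ or $I_n-2e_ke_k^\top$, and correspondingly $x^*=(I_n-BD)^{-1}b$ is one of the two candidates below.

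The computational heart is to evaluate $(I_n-BD)^{-1}b$ for these two patterns. For $D=I_n$ this is simply $x^{(1)}=Mb$. For $D=I_n-2e_ke_k^\top$ I would factor $I_n-BD=(I_n-B)+2Be_ke_k^\top=(I_n-B)\bigl(I_n+2MBe_ke_k^\top\bigr)$ and use the identity $MB=M-I_n$ (which follows from $(I_n-B)^{-1}B=(I_n-B)^{-1}(B-I_n)+M=M-I_n$) to rewrite the rank-one update as $I_n+2(M-I_n)e_ke_k^\top$. A Sherman--Morrison inversion, in which $e_k^\top(M-I_n)e_k=M_{kk}-1$ produces the denominator $1+2(M_{kk}-1)=2M_{kk}-1$, then yields $x^{(2)}=Mb-\frac{2(Mb)_k}{2M_{kk}-1}(M-I_n)e_k$, exactly the second entry of the maximum.

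Finally I would identify which candidate is $x^*$ and check that the maximum selects it. A direct computation gives $x^{(1)}_k=(Mb)_k$ and $x^{(2)}_k=(Mb)_k/(2M_{kk}-1)$, so both have the sign of $(Mb)_k$; matching this against the sign convention that produced each pattern forces $x^*=x^{(1)}$ when $(Mb)_k\geq0$ and $x^*=x^{(2)}$ when $(Mb)_k<0$ (the two candidates coincide when $(Mb)_k=0$). Since $x^{(1)}-x^{(2)}=\frac{2(Mb)_k}{2M_{kk}-1}(M-I_n)e_k$ with $(M-I_n)e_k\geq0$, the componentwise maximum equals $x^{(1)}$ precisely when $(Mb)_k\geq0$ and equals $x^{(2)}$ otherwise, agreeing with $x^*$ in both cases. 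The main obstacle is the middle step: spotting the factorization together with the identity $MB=M-I_n$ so that Sherman--Morrison reproduces the exact coefficient $2(Mb)_k/(2M_{kk}-1)$ and vector $(M-I_n)e_k$; once this is in hand, the sign bookkeeping is routine.
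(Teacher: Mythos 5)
Your proof is correct, and there is in fact nothing in the paper to compare it against: the paper states this theorem without proof, as a result cited from Rohn's technical report~\cite{rohn2014class}, so your argument stands as a self-contained derivation built only from material the survey itself provides. The structure is sound: uniqueness follows from condition~\eqref{sufCondGaveSolvRho} with $A=I_n$; the hypothesis $b_i\geq0$ for $i\neq k$, together with $B\geq0$, pins the solution's sign pattern down to two candidate orthants; and in each orthant the solution of the resulting linear system is computed explicitly, the second one via the factorization $I_n-B+2Be_ke_k^\top=(I_n-B)\bigl(I_n+2(M-I_n)e_ke_k^\top\bigr)$ (using $MB=M-I_n$) and Sherman--Morrison, whose denominator $2M_{kk}-1\geq1$ is guaranteed positive by $M\geq I_n$. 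The final case analysis correctly shows that the sign of $(Mb)_k$ both selects which candidate is the true solution and decides which candidate the componentwise maximum returns, and that the two coincide when $(Mb)_k=0$. One cosmetic point: under the paper's convention $\sgn(0)=-1$, the matrix $\diag(\sgn(x^*))$ need not literally equal $I_n$ off the $k$-th entry when some $x^*_i=0$; you should instead say you \emph{choose} a sign matrix $D$ with $D_{ii}=1$ for $i\neq k$ and $D_{kk}=\pm1$, which satisfies $|x^*|=Dx^*$ — the rest of your argument goes through verbatim.
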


Due to the intractability of the GAVE, it is worth
reducing the dimension of the problem. Rohn~\cite{rohn2014reduction} showed that GAVE~\eqref{Eq1} could be reduced to a smaller-sized GAVE under certain assumptions.

%%%
\paragraph{Theorem of the Alternatives.}
Recall that theorems of the alternatives play an important role in expressing the solvability and unsolvability of linear systems, and they are used to derive duality in linear programming and optimality conditions in nonlinear programming. 
Rohn~\cite{rohn2004theorem} proposed a certain kind of theorem of the alternatives for the GAVE. 

\begin{theorem}\label{thmalternatives}
Let $A, D\in {\R}^{n\times n}$, where $D\geq 0$. Then exactly one of the following alternatives holds:
\begin{enumerate}[(i)]
\item 
for each $B\in{\R}^{n\times n}$ with $| B|\leq D$ and for each $b\in{\R}^n$ the equation
\begin{equation*}
	Ax-B| x| =b
\end{equation*}
has a unique solution;
\item 
the inequality
\begin{equation*}
	| Ax|\leq D| x|,
\end{equation*}
has a nontrivial solution.
\end{enumerate}
\end{theorem}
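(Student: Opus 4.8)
The plan is to collapse the whole dichotomy into a single regularity statement about the interval matrix $[A-D,A+D]$, and then read off the two alternatives as a criterion and its negation. First I would show that alternative (i) is equivalent to regularity of $[A-D,A+D]$. For a fixed $B$ with $|B|\leq D$, Theorem~\ref{thmgg} says that $Ax-B|x|=b$ is uniquely solvable for every $b$ precisely when $A+BD'$ is nonsingular for all $D'\in[-I_n,I_n]$. I would then quantify over all admissible $B$ and verify the set equality
$$
\{A+BD' : |B|\leq D,\ D'\in[-I_n,I_n]\}=[A-D,A+D].
$$
The inclusion ``$\subseteq$'' follows from $|BD'|\leq|B|\leq D$, so each $A+BD'$ lies in $[A-D,A+D]$; the inclusion ``$\supseteq$'' is obtained by writing an arbitrary member of $[A-D,A+D]$ as $A+C$ with $|C|\leq D$ and taking $B=C$, $D'=I_n$. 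Consequently (i) holds if and only if every matrix of $[A-D,A+D]$ is nonsingular, i.e.\ if and only if $[A-D,A+D]$ is regular.

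Next I would invoke the standard characterization of regularity (Rohn~\cite{Roh2009}; its special case $D=I_n$ already appears as condition~(iii) of the corollary following Theorem~\ref{thmAveUnSol}): the interval matrix $[A-D,A+D]$ is regular if and only if the inequality $|Ax|\leq D|x|$ has only the trivial solution $x=0$. Chaining this with the first step, alternative (i) is equivalent to the statement that $|Ax|\leq D|x|$ forces $x=0$, whose logical negation is exactly alternative (ii): the existence of a nontrivial solution of $|Ax|\leq D|x|$. Since (i) is equivalent to the negation of (ii), the two alternatives are mutually exclusive and jointly exhaustive, so exactly one of them holds, as claimed.

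The main obstacle is the regularity criterion ``$[A-D,A+D]$ regular $\iff$ $|Ax|\leq D|x|$ has only the trivial solution,'' which is the genuine content on the interval-analysis side; proving it from scratch would require the Oettli--Prager description of the solution set of interval linear systems, so I would cite it from Rohn's survey rather than reprove it. The only other step demanding care is the set-equality reduction above, which is elementary but essential, since it is what lets us replace the infinite family of GAVEs indexed by $B$ (and their orthant matrices $A+BD'$) by the single interval matrix $[A-D,A+D]$.
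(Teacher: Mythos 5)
Your proposal is correct, and it follows the same underlying route that the paper (and Rohn's original argument) takes: the whole dichotomy is the regularity/singularity alternative for the interval matrix $[A-D,A+D]$, which the paper itself points out in the remark immediately following the theorem ("the first condition is equivalent to regularity of interval matrix $[A-D,A+D]$"). Note, however, that the paper does not actually prove the statement — it is cited from Rohn — so the only substantive comparison is in how you establish the equivalence of (i) with regularity. Your derivation of that equivalence is a nice, self-contained assembly of results already stated in the paper: Theorem~\ref{thmgg} handles each fixed $B$, and your union set-equality $\{A+BD' \mmid |B|\leq D,\ D'\in[-I_n,I_n]\}=[A-D,A+D]$ (with the key $\supseteq$ step $B=C$, $D'=I_n$) correctly resolves the subtlety the paper itself flags, namely that for a \emph{single} $B$ the set $\{A+BD'\}$ underfills $[A-|B|,A+|B|]$, whereas the union over all admissible $B$ fills $[A-D,A+D]$ exactly. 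The remaining ingredient — regularity of $[A-D,A+D]$ iff $|Ax|\leq D|x|$ has only the trivial solution — is genuine interval-analysis content (Oettli--Prager), and citing it from Rohn's survey is the right call; the paper relies on the same fact implicitly, and its special case $D=I_n$ appears as conditions (ii)--(iii) of the corollary after Theorem~\ref{thmAveUnSol}. The one caveat worth stating is historical rather than logical: Theorem~\ref{thmgg} (Wu--Shen) postdates Rohn's theorem, so your argument is a modern re-derivation, not a reconstruction of the original proof — but within the paper's logical structure it is perfectly sound.
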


Notice that the first condition is equivalent to regularity of interval matrix $[A-D,A+D]$. 
The second condition can be equivalently stated (it is not so obvious): There exist $\lambda \in[0,1]$ and $y\in\{\pm1\}^n$ such that the absolute value equation
\begin{equation*}
 Ax-\lambda\diag(y)D| x|=0
\end{equation*}
has a nontrivial solution.

Rohn~\cite{Roh2012f} also presented a type of theorem of the alternatives for the absolute value equation in the form $|Ax| - |B||x| = b$. Instead of the unique solvability, the theorem expresses the existence of $2^n$ solutions for each positive right-hand side vector~$b$. Consequently, he obtained several sufficient conditions for the existence of $2^n$ solutions.

%%%
\paragraph{Overdetermined GAVE.}
Overdetermined systems (i.e., when $m>n$) usually have no solution. Rohn~\cite{r2019overdetermined} observed that if  $\rank(A)=n$ and $\rho(| A^{\dagger}B|)<1$, where $A^{\dagger}$ stands for the Moore--Penrose pseudoinverse of~$A$, then the GAVE has at most one solution. Moreover, he also proposed iterations
\begin{align*}
x^{0}&=A^{\dagger}b,\\
x^{i+1}&=-A^{\dagger}B| x^{i}|+A^{\dagger}b,\quad i=0,1,\dots
\end{align*}
that converge to a unique point~$x^*$. If the GAVE is solvable, then $x^*$ is its solution.

%%%
\paragraph{Other Extensions.}
Other variants of generalized AVE were also discussed; see, e.g., \cite{KumDee2023,wu2021unique,ZhoWu2021}. In particular, Wu~\cite{wu2021unique} considered generalized AVE in the form
\begin{equation}\label{NGAVE}
 Ax + |Bx| = b,
\end{equation}
where $A,B \in {\R}^{n\times n}$, and $b\in{\R}^n$.
It is straightforward to reduce \eqref{NGAVE} to GAVE~\eqref{Eq1}
\begin{align*}
 Ax + |y| = b,\ \ Bx - y = 0,
\end{align*}
however, some conditions take a simpler form. For instance, unique solvability for any right-hand side vector is achieved if and only if $A+DB$ is nonsingular for every $D\in[-I_n,I_n]$. From this characterization also, the analogous sufficient conditions 
$$
\rho(|BA^{-1}|)<1\ \mbox{ and }\ \|BA^{-1}\|<1
$$
directly follow.

%%%%%%%%%%%%%%%%%%%%%%%%%%%%%%%%%%%%%%%%%%%
\subsection{Sylvester-Like Absolute Value Matrix Equations}

Hashemi~\cite{hashemi2021sufficient} introduced Sylvester-like absolute value matrix equations
\begin{equation}\label{SylvesterAVE}
AXB + C| X| D = E,
\end{equation}
where $A,C \in {\R}^{m\times n}$,  $B,D \in {\R}^{p\times q}$, and $E \in {\R}^{m\times q}$  are given and matrix $ X \in {\R}^{n\times p} $ is unknown.
 
Matrix equations can be transformed to standard equations by utilizing Kronecker product~$\otimes$ and vectorization $\vect(X)$ of matrix~$X$ (a transformation stacking the columns of $X$ into a single column vector). Thus, \eqref{SylvesterAVE} equivalently reads as
\begin{align*}
    (B^{\top} \otimes A + D^{\top} \otimes C) \vect(X) = \vect(E).
\end{align*}
Using this transformation, we can adapt the GAVE solvability conditions. Thus, Theorem~\ref{thmGaveCharT} gets the following form.

\begin{proposition}%\label{thmMaveCharac}
The following conditions are equivalent:
\begin{enumerate}[(i)]
\item 
The system \eqref{SylvesterAVE} has a unique solution for each $F\in\R^{m\times q}$;
\item\label{itThmMaveCharac3}
the system $AXB+CYE=0$, $|Y|\leq|X|$ has only the trivial solution $X=Y=0$;
\item\label{itThmMaveCharac2}
the system $|A^\top XB^\top|\leq|C^\top XE^\top |$ has only the trivial solution $X=0$.
\end{enumerate}
\end{proposition}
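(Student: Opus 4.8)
The plan is to reduce everything to the vectorized form already displayed just before the proposition and then invoke Theorem~\ref{thmGaveCharT}. Writing $x=\vect(X)$ and using the standard identity $\vect(PXQ)=(Q^\top\otimes P)\,x$, the matrix equation \eqref{SylvesterAVE} becomes $(B^\top\otimes A)\,x+(D^\top\otimes C)\,\vect(|X|)=\vect(E)$. The one elementary point that makes this work is that vectorization merely permutes the entries, so it commutes with the entrywise absolute value, i.e. $\vect(|X|)=|x|$. Hence \eqref{SylvesterAVE} is literally the GAVE $\tilde A x-\tilde B|x|=\vect(E)$ with $\tilde A=B^\top\otimes A$ and $\tilde B=-(D^\top\otimes C)$. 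Since $X\mapsto\vect(X)$ and the right-hand side $E\mapsto\vect(E)$ are bijections, condition~(i) is equivalent to unique solvability of this GAVE for every right-hand side, which (by Theorem~\ref{thmgg} and the ensuing discussion) is equivalent to regularity of the associated interval matrix~\eqref{intMcDouble}, i.e. to condition~\eqref{itCorIntMcDouble1} of Theorem~\ref{thmGaveCharT} applied to $\tilde A,\tilde B$.

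First I would translate condition~\eqref{itCorIntMcDouble2} of Theorem~\ref{thmGaveCharT}, namely $\tilde A u+\tilde B v=0$, $|v|\leq|u|\Rightarrow u=v=0$. Setting $u=\vect(X)$, $v=\vect(Y)$ and reading the Kronecker identities backwards gives $\tilde A u=\vect(AXB)$ and $\tilde B v=-\vect(CYD)$, while $|v|\leq|u|$ is exactly the entrywise inequality $|Y|\leq|X|$. Thus the condition reads $AXB-CYD=0$, $|Y|\leq|X|\Rightarrow X=Y=0$, and replacing $Y$ by $-Y$—harmless since the constraint $|Y|\leq|X|$ is insensitive to the sign of $Y$—yields the stated condition~(ii). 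Similarly, condition~\eqref{itCorIntMcDouble3} of Theorem~\ref{thmGaveCharT} reads $|\tilde A^\top x|\leq|\tilde B^\top x|\Rightarrow x=0$. Using $\tilde A^\top=B\otimes A^\top$ and $\tilde B^\top=-(D\otimes C^\top)$, together with the transposed identities $(B\otimes A^\top)\vect(X)=\vect(A^\top XB^\top)$ and $(D\otimes C^\top)\vect(X)=\vect(C^\top XD^\top)$, and observing that the absolute value absorbs the minus sign in $\tilde B^\top$, this becomes precisely $|A^\top XB^\top|\leq|C^\top XD^\top|\Rightarrow X=0$, i.e. condition~(iii).

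The only genuine work is bookkeeping with the Kronecker dictionary: getting the transposed instances right (in particular $(B\otimes A^\top)\vect(X)=\vect(A^\top XB^\top)$, which is the $P=A^\top$, $Q=B^\top$ case of $\vect(PXQ)=(Q^\top\otimes P)x$) and tracking the two harmless sign flips induced by the minus sign in $\tilde B$. I would also remark that the equivalence is only non-vacuous when $np=mq$, so that $\tilde A,\tilde B$ are square and condition~(i) can hold; otherwise all three conditions fail simultaneously. No step is a real obstacle—once the vectorization is in place, the result is a direct corollary of Theorem~\ref{thmGaveCharT}, exactly as the surrounding text anticipates.
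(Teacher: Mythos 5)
Your main argument is correct and is essentially the paper's own proof: the paper likewise vectorizes \eqref{SylvesterAVE} into a GAVE via the Kronecker identity and then reads the Sylvester proposition off Theorem~\ref{thmGaveCharT}. The bookkeeping you supply---the observation $\vect(|X|)=|\vect(X)|$, the transposed identity $(B\otimes A^\top)\vect(X)=\vect(A^\top XB^\top)$, and the two sign flips absorbed by the symmetry of $|Y|\leq|X|$ and by the outer absolute values---is exactly what makes that reduction rigorous, so the core of your proposal matches the intended argument.

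One caveat: your closing remark that for $np\neq mq$ ``all three conditions fail simultaneously'' is false, and the rectangular case cannot be dismissed this way. Take $m=2$, $n=p=q=1$, $A=(1,0)^\top$, $B=1$, $C=0$, $D=0$. Then condition~(ii) holds: $AXB=0$ forces the scalar $X$ to vanish, and then $|Y|\leq|X|$ forces $Y=0$. But condition~(i) fails, since $AXB=E$ has no solution for $E=(0,1)^\top$, and condition~(iii) fails, since every $X=(0,t)^\top$ satisfies $|A^\top XB^\top|=0\leq 0=|C^\top XD^\top|$. So in the non-square case the three conditions genuinely diverge; the equivalence (and hence the proposition itself, whose statement the paper leaves dimensionally loose) must be read under the implicit assumption $np=mq$, which is precisely the regime in which your reduction to Theorem~\ref{thmGaveCharT} is valid.
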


However, not all conditions for the GAVE are easily extendable to the matrix equations. There are known some special cases only~\cite{KumDee2023u}. For instance, Theorem~\ref{thmgg} is extended as follows.

\begin{proposition}%\label{thmMaveCharacGave}
The system $AX+B|X|=F$ has a unique solution for each $F\in\R^{n\times n}$ if and only if matrix $A + BD$ is nonsingular for every $D\in[-I_n,I_n]$.
\end{proposition}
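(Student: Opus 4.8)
The plan is to reduce the matrix equation to a standard GAVE by vectorization and then invoke the characterization of Theorem~\ref{thmgg}. First I would apply the $\vect$ operator to both sides of $AX+B|X|=F$. Using the identity $\vect(MXN)=(N^\top\otimes M)\vect(X)$ with $N=I_n$, together with the fact that the entrywise absolute value commutes with vectorization, i.e.\ $\vect(|X|)=|\vect(X)|$, the equation becomes
$$(I_n\otimes A)\,\vect(X)+(I_n\otimes B)\,|\vect(X)|=\vect(F).$$
Writing $x\coloneqq\vect(X)$, $f\coloneqq\vect(F)$, $\mathcal{A}\coloneqq I_n\otimes A$, and $\mathcal{B}\coloneqq I_n\otimes B$, this is exactly a GAVE $\mathcal{A}x+\mathcal{B}|x|=f$ of size $n^2$. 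Since $\vect$ is a bijection between $\R^{n\times n}$ and $\R^{n^2}$, the matrix equation has a unique solution for each $F$ if and only if this GAVE has a unique solution for each $f\in\R^{n^2}$.

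Next I would apply Theorem~\ref{thmgg} to the $n^2$-dimensional GAVE: it is uniquely solvable for every right-hand side if and only if $\mathcal{A}+\mathcal{B}\mathcal{D}$ is nonsingular for every diagonal $\mathcal{D}\in[-I_{n^2},I_{n^2}]$. The remaining step is to translate this $n^2$-dimensional regularity condition back into the $n$-dimensional one stated in the proposition. The key observation is that $\mathcal{A}=I_n\otimes A$ and $\mathcal{B}=I_n\otimes B$ are block-diagonal, each consisting of $n$ copies of $A$ and $B$, respectively. A diagonal matrix $\mathcal{D}\in[-I_{n^2},I_{n^2}]$ therefore splits into $n$ diagonal blocks $D_1,\dots,D_n$, each ranging freely over $[-I_n,I_n]$, and $\mathcal{A}+\mathcal{B}\mathcal{D}$ becomes block-diagonal with blocks $A+BD_1,\dots,A+BD_n$. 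Since a block-diagonal matrix is nonsingular exactly when all its diagonal blocks are, and since the blocks may be chosen independently, the condition that $\mathcal{A}+\mathcal{B}\mathcal{D}$ is nonsingular for all $\mathcal{D}\in[-I_{n^2},I_{n^2}]$ is equivalent to $A+BD$ being nonsingular for all $D\in[-I_n,I_n]$, which completes the argument.

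The routine parts are the vectorization identities and the bijectivity of $\vect$. The step I expect to require the most care is the block-diagonal reduction: one must verify both that the Kronecker products $I_n\otimes A$ and $I_n\otimes B$ produce the $A$-on-the-diagonal block structure (rather than $A\otimes I_n$, which would scatter the entries) and that, as $\mathcal{D}$ ranges over all of $[-I_{n^2},I_{n^2}]$, its blocks $D_j$ realize every combination of elements of $[-I_n,I_n]$ independently. This independence is precisely what guarantees that controlling a single block $A+BD$ over $[-I_n,I_n]$ suffices; a coupled or restricted range would only yield a sufficient, not a necessary, condition.
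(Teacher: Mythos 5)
Your proof is correct and follows essentially the route the paper intends: the proposition appears immediately after the paper introduces the vectorization/Kronecker transformation, and your argument—rewriting $AX+B|X|=F$ as the $n^2$-dimensional GAVE $(I_n\otimes A)x+(I_n\otimes B)|x|=f$, applying Theorem~\ref{thmgg}, and using the block-diagonal structure with independently chosen blocks $D_1,\dots,D_n\in[-I_n,I_n]$ to collapse the condition back to $n\times n$—is precisely the adaptation the paper has in mind, including the key point that independence of the blocks is what yields necessity and not merely sufficiency. One cosmetic remark: Theorem~\ref{thmgg} is stated for $Ax-B|x|=b$, so strictly one should note that the symmetry of the interval $[-I_{n^2},I_{n^2}]$ (replacing $\mathcal{D}$ by $-\mathcal{D}$) makes the sign in front of $\mathcal{B}$ immaterial, exactly as in the statement of the proposition itself.
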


Several sufficient conditions for unique solvability of \eqref{SylvesterAVE} w.r.t.\ any  $E \in {\R}^{m\times q}$ were proposed in \cite{hashemi2021sufficient,wang2021new}. They resemble those conditions for GAVE:
\begin{enumerate}
\item
%$\rho(| A^{-1}C|)\rho(| B^{-1}D|) < 1$,
$\rho(| A^{-1}C|)\rho(| DB^{-1}|) < 1$, 
which extends~\eqref{sufCondGaveSolvRho};
\item
$\|A^{-1}C\|~  \|DB^{-1}\|<1$, which is an analogy of~\eqref{sufCondGaveSolvNorm};
\item 
$\sigma_{\max}(C)\sigma_{\max}(D) < \sigma_{\min}(A)\sigma_{\min}(B)$, which is an analogy of~\eqref{sufCondGaveSolvSigmaBA}.
\end{enumerate}

Analogously, we adapt conditions for unsolvability. Thus, Proposition~\ref{thmAveUnsolvNormA} is extended to the matrix equations as follows~\cite{hashemi2021sufficient}.

\begin{proposition}\mbox{}
\begin{enumerate}[(i)]
\item
Let $C$ be a square nonsingular matrix, $0\neq C^{-1}E \geq 0,$ and
$$
\sigma_{\max}(A)\sigma_{\max}(B) < \sigma_{\min}(C).
$$
Then the Sylvester-like equation $AXB - C| X| = E$ has no solution.
\item
Let $D$ be a square nonsingular matrix, $0\neq ED^{-1} \geq 0,$ and
$$
\sigma_{\max}(A)\sigma_{\max}(B) < \sigma_{\min}(D).
$$	
Then Sylvester-like equation $AXB-| X| D=E$ has no solution.
\end{enumerate}
\end{proposition}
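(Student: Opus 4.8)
The plan is to reduce each matrix equation to an ordinary vector AVE and then invoke the unsolvability criterion of Proposition~\ref{thmAveUnsolvNormA}. First I would vectorize the equation exactly as in the reduction preceding this proposition, using the identity $\vect(MXN)=(N^\top\otimes M)\vect(X)$ together with the crucial entrywise observation that $\vect(|X|)=|\vect(X)|$. For part~(i), writing $x=\vect(X)$, the equation $AXB-C|X|=E$ becomes $(B^\top\otimes A)x-(I\otimes C)|x|=\vect(E)$.

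The next step is to normalize the coefficient of $|x|$ to the identity. Since $C$ is nonsingular, I would left-multiply by $(I\otimes C)^{-1}=I\otimes C^{-1}$, which gives the standard AVE
\begin{align*}
(B^\top\otimes C^{-1}A)\,x-|x|=\vect(C^{-1}E),
\end{align*}
using $(I\otimes C^{-1})(B^\top\otimes A)=B^\top\otimes C^{-1}A$ and $(I\otimes C^{-1})\vect(E)=\vect(C^{-1}E)$. The right-hand side $\vect(C^{-1}E)$ is nonnegative and nonzero precisely because $C^{-1}E\geq0$ and $C^{-1}E\neq0$, which matches the hypothesis $0\neq b\geq0$ of Proposition~\ref{thmAveUnsolvNormA}.

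It then remains to verify the norm condition $\|B^\top\otimes C^{-1}A\|<1$. Here I would use the fact that the singular values of a Kronecker product are the pairwise products of the singular values of the factors, so that $\|M\otimes N\|=\|M\|\,\|N\|$; combined with submultiplicativity this yields
\begin{align*}
\|B^\top\otimes C^{-1}A\|=\|B\|\,\|C^{-1}A\|\leq\sigma_{\max}(B)\,\sigma_{\max}(C^{-1})\,\sigma_{\max}(A)=\frac{\sigma_{\max}(A)\sigma_{\max}(B)}{\sigma_{\min}(C)}<1,
\end{align*}
where the final inequality is exactly the assumption (and $\sigma_{\max}(C^{-1})=1/\sigma_{\min}(C)$). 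Proposition~\ref{thmAveUnsolvNormA} then gives that this AVE, and hence the original matrix equation, has no solution. Part~(ii) is entirely analogous: for $AXB-|X|D=E$ one vectorizes to $(B^\top\otimes A)x-(D^\top\otimes I)|x|=\vect(E)$ and right-normalizes by multiplying by $(D^\top\otimes I)^{-1}=(D^{-1})^\top\otimes I$, obtaining the AVE with matrix $(BD^{-1})^\top\otimes A$ and right-hand side $\vect(ED^{-1})$, to which the same singular-value estimate with $\sigma_{\min}(D)$ applies.

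The only delicate points, rather than genuine obstacles, are bookkeeping: one must keep track of the side on which the inverse acts (left by $C^{-1}$ for part~(i), right by $D^{-1}$ for part~(ii)), so that the coefficient of $|x|$ collapses to the identity and the transformed right-hand side becomes $\vect(C^{-1}E)$ respectively $\vect(ED^{-1})$; and one must invoke the Kronecker singular-value identity to pass from the spectral norm of the tensor product to a product of singular values. Both steps are standard, so the whole argument is essentially a clean reduction of the matrix statement to the already-established scalar result.
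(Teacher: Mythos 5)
Your proof is correct and follows exactly the route the paper indicates: the paper states this proposition (citing Hashemi) as the adaptation of Proposition~\ref{thmAveUnsolvNormA} to matrix equations via the Kronecker-product vectorization introduced just before it, which is precisely your reduction. Your verification of the details --- $\vect(|X|)=|\vect(X)|$, the mixed-product identities that collapse the coefficient of $|x|$ to the identity, and $\|M\otimes N\|=\|M\|\,\|N\|$ giving $\|B^\top\otimes C^{-1}A\|\leq \sigma_{\max}(A)\sigma_{\max}(B)/\sigma_{\min}(C)<1$ --- is sound in both parts.
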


Another form of absolute value matrix equations was considered by Wu~\cite{wu2021unique},
\begin{equation}\label{SylvesterAVEnew}
AXB + | C X D| = E.
\end{equation}
Unique solvability for an arbitrary right-hand side matrix $E$ was shown under similar sufficient conditions:
\begin{enumerate}
\item
$\rho(| CA^{-1}|)\rho(| B^{-1}D|) < 1$;
\item
$\|CA^{-1}\|\cdot\|B^{-1}D\|<1$;
\item 
$\sigma_{\max}(C)\sigma_{\max}(D) < \sigma_{\min}(A)\sigma_{\min}(B)$.
\end{enumerate}

%%%%%%%%%%%%%%%%%%%%%%%%%%%%%%%%%%%%%%%%%%%
\subsection{Other Generalizations}

The absolute value is associated with the nonnegative orthant. Extensions of the GAVE that consider another convex cone exist, too. In particular, Hu et al.~\cite{hu2011generalized}  introduced the absolute value equations associated with the second-order cone. Therein, the absolute value function is not defined as the traditional sum of the positive and negative parts but as the sum of the projections onto the positive and negative second-order cones instead. Follow-up research in this direction involves \cite{huang2019convergent,miao2015generalized,miao2017smoothing,MioYao2022,nguyen2019unified}. The relation to general equilibrium problems was studied by Gajardo and Seeger~\cite{GajSee2014}.
Extensions to circular cones were explored by Miao and Yang~\cite{miao2015generalized}. 

%In 2019, Nguyen et al.~\cite{nguyen2019unified}   presented a comprehensive study on constructing unified smoothing functions for solving the absolute value equation associated with the second-order cone (SOCAVE).   The authors provide numerical comparisons to evaluate the performance of different smoothing functions when used with the smoothing Newton algorithm.  The results demonstrate that the commonly used loss function in the engineering community is the least effective among the proposed smoothing functions, indicating the potential for employing alternative functions for solving engineering problems.
%Miao et al.~\cite{miao2015generalized} explored a class of absolute value equations associated with circular cones (CCAVE). This class serves as a generalization of previously discussed absolute value equations and draws an analogy with the relationship between circular cones and second-order cones. The authors establish the equivalence between CCAVE and circular cone linear complementarity problems, extending the well-known equivalence between absolute value equations and linear complementarity problems. They investigate the properties of the generalized differential of the absolute value function over the circular cone, leading to the proposal of a generalized Newton method for solving CCAVE. The convergence of the method is proven under mild conditions, and preliminary numerical results demonstrate its efficiency.

Casulli and Zanolli \cite{CasZan2012} considered nonlinear systems, in which the absolute value is replaced with an integral of a specified function.

Tensor absolute value equations \cite{BeiKal2022, cui2022existence, CuiLia2022, DuZha2018, JiaLi2021} are a rather new area with few results and many unsolved questions; see also the paragraph in Section~\ref{Challenges} devoted to challenges and open problems.
\section{Numerical Methods for Solving Absolute Value Equation} \label{NumerRes}

A variety of iterative approaches have been proposed for solving the AVE (\ref{Eq2}) and GAVE (\ref{EqGAVE}). This section provides an overview of the fundamental methods commonly employed by algorithms to solve the AVE. For GAVE, the methods work in a similar fashion; we present the GAVE form only when it is meaningful.

In essence, finding a solution for the AVE (\ref{Eq2}), i.e., solving the system  $Ax - |x|-b  = 0$, is equivalent to finding the root of the nonsmooth function
\begin{align}\label{FunctionAVE}
f(x) = Ax - |x| - b.
\end{align}
Thus, our objective is to solve the equation:
\begin{align}\label{RootAVE}
f(x) = 0.
\end{align} 

%%%%
\subsection{Optimization Approaches}

As we observed in Section~\ref{ssOptReform}, the AVE can be expressed by means of optimization modelling. In this section, we will look at several concave optimization-based strategies. Recall from \eqref{minSum} that $x^\star$ is a solution of the AVE if and only if it is an optimal solution of the following concave minimization problem and the optimal value is zero:
\begin{align}\label{Concave}
\min\ e^{\top}(Ax-b-|x|) \ \ \st\ \ x\in\mathcal{S},
%	&\st \ \ |x|\leq Ax-b. \nonumber
\end{align}
where
\begin{align}\label{dfS}
\mathcal{S}
\coloneqq\{ x\mmid |x|\leq Ax-b\}
= \{x\mmid (A+I_n)x\geq b,\, (A-I_n)x\geq b\}
\end{align}
is the feasible set. 
Mangasarian~\cite{mangasarian2007absolute1}  suggested a successive linearization algorithm based on this concept; the pseudocode is given in Algorithm~\ref{sla}. Furthermore, he demonstrated that the method converges to a stationary point of problem~\eqref{Concave}. 
Let $g=e^{\top}(Ax-b-|x|)$ denote the concave objective function of problem~\eqref{Concave}, which is bounded below by zero on~$\mathcal{S}$. Now, we establish a significant result through the following theorem.

\begin{algorithm}
\caption{Successive linearization algorithm}\label{sla}
\begin{comment} 
\begin{enumerate}
\item 
Pick $x^0\in \R^n$ arbitrarily.
\item 
For $k=0, 1, \dots $ repeat
\begin{enumerate}[(i)]
\item 
Pick $x^{k+1}$ a vertex optimal solution of the linear program
\begin{align*}
	%  \min \ & -e^{\top} (A+D(x^k))x  \st  Ax+|x|\leq b.
   &\min\ (e^{\top}A-\sgn(x^{k})^{\top})x
   \ \ \st\ \ x \in\mathcal{S}
   %\\ &\st \ \	-(A+I)x\leq -b,\ (I-A)x\leq -b.
			\end{align*}
			\item 
			If $(e^{\top}A-\sgn(x^{{k}})^{\top})(x^{k+1}-x^k)=0$, then stop. 
		\end{enumerate}
	\end{enumerate} 
\end{comment} 
\begin{algorithmic}[1]
\STATE 
pick $x^0\in \R^n$ arbitrarily
\FOR{$k=0, 1, \dots $}
\STATE
compute $x^{k+1}$ a vertex optimal solution of the linear program
\abovedisplayskip=1ex\belowdisplayskip=-1ex
\begin{align*}
   \min\ (e^{\top}A-\sgn(x^{k})^{\top})x
   \ \ \st\ \ x \in\mathcal{S}
\end{align*}
\IF{$(e^{\top}A-\sgn(x^{{k}})^{\top})(x^{k+1}-x^k)=0$}
\RETURN{solution $x^{k+1}$}
\ENDIF
\ENDFOR
\end{algorithmic} 

\end{algorithm}

\begin{theorem}
Algorithm~\ref{sla} generates a finite sequence of feasible vertices $\{x^{1},x^{2},\dots,x^{\ell}\}$ with strictly
decreasing objective function values: $g(x^{1})>g(x^{2})>\dots>g(x^{\ell})$, such that $x^{\ell}$ 
satisfies the minimum principle necessary optimality condition:
\[
(e^{\top}A-\sgn(x^{\ell})^{\top})(x-x^{\ell})\geq 0,\qquad \forall 
x\in\mathcal{S}.
%x\in\{x|\;-Ax+b\leq x\leq Ax-b\}.
\]
\end{theorem}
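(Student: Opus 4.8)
The plan is to exploit the concavity of $g(x)=e^{\top}(Ax-b-|x|)$ and to read the linear objective of each linear program as the linearization of $g$ at the current iterate. First I would record the one analytic fact that drives everything. Writing $c_k\coloneqq e^{\top}A-\sgn(x^k)^{\top}$, the row vector $c_k$ is the gradient of the linearization of $g$ at $x^k$: the affine part $e^{\top}(Ax-b)$ contributes $e^{\top}A$, and $-\sgn(x^k_i)$ is a valid supergradient component of the concave term $-|x_i|$ at $x^k_i$ (this holds even when $x^k_i=0$, where the convention $\sgn(0)=-1$ gives the component $1\in[-1,1]$, the superdifferential of $-|\cdot|$ at $0$). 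Since $g$ is concave, it lies below its supporting hyperplane, so
\begin{align*}
 g(x)\le g(x^k)+c_k(x-x^k)\qquad\forall x\in\R^n.
\end{align*}

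Next I would establish monotone descent. As $x^{k+1}$ minimizes $c_k x$ over $\mathcal{S}$ and $x^k\in\mathcal{S}$, we get $c_k(x^{k+1}-x^k)\le0$; combined with the overestimation bound this yields $g(x^{k+1})\le g(x^k)+c_k(x^{k+1}-x^k)\le g(x^k)$. Whenever the algorithm does not halt, the stopping test fails, i.e.\ $c_k(x^{k+1}-x^k)\ne0$, hence $c_k(x^{k+1}-x^k)<0$ and the decrease is strict: $g(x^{k+1})<g(x^k)$. I would also note that the linear program is bounded below on $\mathcal{S}$: the defining inequalities of $\mathcal{S}$ give $e^{\top}|x|\le e^{\top}(Ax-b)$, so $g\ge0$ on $\mathcal{S}$, and then the overestimation bound forces $c_k x\ge c_k x^k-g(x^k)$. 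Thus a vertex optimal solution exists and each $x^{k+1}$ is a feasible vertex of $\mathcal{S}$.

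Finiteness is then a counting argument: $\mathcal{S}$ has finitely many vertices, all iterates are vertices, and the chain of strict inequalities $g(x^1)>g(x^2)>\cdots$ prevents any vertex from recurring, so the stopping test must succeed after finitely many steps, producing the finite sequence $\{x^1,\dots,x^\ell\}$. It remains to read off the optimality condition at the terminal iterate $x^\ell$. Termination at $x^\ell$ means that solving the linear program with $c_\ell$ produces no improvement, i.e.\ $c_\ell(x^{\ell+1}-x^\ell)=0$ with $x^{\ell+1}$ minimizing $c_\ell x$ over $\mathcal{S}$; hence $x^\ell$ itself is optimal for that program, and LP optimality gives $c_\ell x\ge c_\ell x^\ell$ for all $x\in\mathcal{S}$. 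Rewritten, this is exactly $(e^{\top}A-\sgn(x^\ell)^{\top})(x-x^\ell)\ge0$ for all $x\in\mathcal{S}$, the claimed minimum principle.

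The step I expect to require the most care is the supergradient claim at iterates with vanishing coordinates: one must check that the combinatorial choice $\sgn(0)=-1$ selects a legitimate element of the superdifferential of $-|x|$, since this is precisely what licenses the overestimation bound and, through it, both the strict descent and the final reading of the optimality condition. I would also be mildly careful that $\mathcal{S}$ is pointed (contains no line), so that vertex optimal solutions genuinely exist; should $\mathcal{S}$ contain a line, the vertex-based finiteness argument should be phrased on a minimal face, or carried under the standing convention that the linear program is always solved at a vertex.
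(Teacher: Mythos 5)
Your proposal is correct and is essentially the argument behind the paper's version of this result: the paper states the theorem without proof, citing Mangasarian's successive linearization paper, and the proof there is exactly your combination of the supergradient/overestimation bound for the concave objective (valid under the convention $\sgn(0)=-1$), strict descent whenever the stopping test fails, and the finiteness of the vertex set of $\mathcal{S}$, with the minimum principle read off from LP optimality of $x^{\ell}$ at termination. Your one hedge can be closed immediately: $\mathcal{S}$ can never contain a line, because any direction $d$ with both $\pm d$ in the recession cone satisfies $(A+I_n)d=0$ and $(A-I_n)d=0$, whence $2d=0$; so $\mathcal{S}$ is always pointed and vertex optimal solutions of each linear program exist whenever $\mathcal{S}\neq\emptyset$.
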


Mangasarian~\cite{mangasarian2015hybrid} also proposed a hybrid method for solving the AVE. Rewrite the AVE as $y=|x|$ , $y=Ax-b$ and consider its relaxation
\begin{equation}\label{mnaZ}
\mathcal{Z}=\{(x,y)\in {\R^{2n}} \mmid y\geq x \geq -y,\, y\geq Ax-b\}.
\end{equation}
Then the AVE is feasible if and only if the following concave minimization problem has the optimal value zero:
\begin{equation}\label{Concave2}
\min_{x,y}\ e^{\top}(y-|x|)+e^{\top}(y-Ax+b)
 \ \ \st\ \ (x,y)\in\mathcal{Z}.
\end{equation}
The suggested hybrid strategy (Algorithm~\ref{hybridmethod}) consists of alternating between solving a linear programming linearization of the concave problem \eqref{Concave2} to generate a solution $(x, y)$ and using the signs of the solution $ x$ to linearize the AVE to a standard system of linear equations.

%\begin{equation}\label{linsystem}
%Ax-Ex=b.
%\end{equation}
\begin{algorithm}
\caption{Hybrid method}\label{hybridmethod}
% Choose an accuracy tolerance (typically $tol = 10^{-8}$) and the maximum number of iterations itmax (typically $itmax = 10$). Initialize the algorithm by setting iteration number $i = 0$ and $x^{0} = 0.$
\begin{comment}
\begin{enumerate}
\item 
Pick $x^0\in \R^n$ arbitrarily. 
Choose $itmax$ (typically $itmax = 10$).
\item 
for $k=0, 1, \dots ,itmax$  do
\begin{enumerate}[(i)]
\item 
Let $z^k$ be a solution of $(A-\diag(\sgn(x^k)))z=b$.
%Let $E = \diag(\sgn(x^i))$ and solve the linear Eq. (\ref{linsystem}) and call its solution $z^i.$
\item
Linearize the concave minimization problem (\ref{Concave2}) around $z^k$ as follows:
\begin{align}
\min_{x,y}\;-(e^{\top}A+\sgn(z^{k})^{\top})x+2e^{\top}y
\ \ \st\ \ (z,y)\in\mathcal{Z}.
%&s.t.\qquad \qquad\qquad Ax-y\leq b,\nonumber\\
%&\qquad \qquad\qquad\;\;x-y\leq 0,\nonumber\\
%&\qquad \qquad\qquad\;\;-x-y\leq 0.
\end{align}
Let $(x^{k+1}, y^{k+1})$ be the solution of this linear program.
\item
If $x^{k+1}$ solves AVE to a given accuracy, stop.
%If the number of components satisfying $|Ax^{i+1}-|x^{i+1}|-b|>0$ is $0$ or if $i=itmax$ stop.
%\item
%Set $i = i + 1$ and go to Step 2.
\end{enumerate}
\end{enumerate}
\end{comment}
\begin{algorithmic}[1]
\STATE 
pick $x^0\in \R^n$ arbitrarily, 
choose $itmax$ (typically $itmax = 10$)
\FOR{$k=0, 1, \dots ,itmax$}
\STATE
compute $z^k$ a solution of $(A-D(x^k))z=b$
\STATE
linearize the concave minimization problem (\ref{Concave2}) around $z^k$ as follows:
\abovedisplayskip=1ex\belowdisplayskip=-0.1ex
\begin{align*}
\min_{x,y}\;-(e^{\top}A+\sgn(z^{k})^{\top})x+2e^{\top}y
\ \ \st\ \ (x,y)\in\mathcal{Z}
\end{align*}
\STATE
compute $(x^{k+1}, y^{k+1})$ the solution of this linear program
\IF{$x^{k+1}$ solves AVE to a given accuracy}
\RETURN{solution $x^{k+1}$}
\ENDIF
\ENDFOR
\end{algorithmic} 
\end{algorithm}

%Proposition~\ref{converghybrid} shows the convergent of Algorithm~\ref{hybridmethod}.

\begin{proposition}\label{converghybrid}
 If $(\sgn(z^k)-\sgn(x^k))^{\top}x^{k+1}=0$, Algorithm~\ref{hybridmethod} terminates in a finite number of iterations at a stationary point of \eqref{Concave2}.
\end{proposition}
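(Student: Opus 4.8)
The plan is to recognize the hybrid iteration as a Frank–Wolfe / successive-linearization descent for the concave program \eqref{Concave2}, and to run the standard "concavity + finitely many vertices" argument, but threaded through the intermediate Newton point $z^k$ rather than the previous iterate. First I would write the objective of \eqref{Concave2} as
\[
F(x,y)=e^{\top}(y-|x|)+e^{\top}(y-Ax+b)=2e^{\top}y-e^{\top}|x|-e^{\top}Ax+e^{\top}b,
\]
which is concave (linear terms plus the concave $-e^{\top}|x|$). I would then verify that the LP in Algorithm~\ref{hybridmethod} is precisely the linearization of $F$ at $z^k$: a supergradient of $-e^{\top}|x|$ at $z^k$ is $-\sgn(z^k)$, and since $e^{\top}|z^k|=\sgn(z^k)^{\top}z^k$ the linearized objective collapses (up to the constant $e^{\top}b$) to $-(e^{\top}A+\sgn(z^k)^{\top})x+2e^{\top}y$. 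Denote this linearization $L_k(x,y)=2e^{\top}y-\sgn(z^k)^{\top}x-e^{\top}Ax+e^{\top}b$.

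The heart of the descent is three elementary facts. By concavity, the linearization overestimates: $F(x,y)\le L_k(x,y)$ for all $(x,y)$, with equality exactly when $x$ lies in the closed orthant of $z^k$ (i.e. $\sgn(z^k)^{\top}x=e^{\top}|x|$). Next, $(z^k,|z^k|)\in\mathcal{Z}$: the bounds $|z^k|\ge z^k\ge -|z^k|$ are automatic, while the Newton equation $(A-D(x^k))z^k=b$ gives $Az^k-b=\diag(\sgn(x^k))z^k$, whose $i$-th entry $\sgn(x^k_i)z^k_i\le|z^k_i|$, so $Az^k-b\le|z^k|$. Finally, evaluating at this point yields equality $F(z^k,|z^k|)=L_k(z^k,|z^k|)$. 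Combining LP optimality of $(x^{k+1},y^{k+1})$ with the overestimate then gives the key chain
\[
F(x^{k+1},y^{k+1})\le L_k(x^{k+1},y^{k+1})\le L_k(z^k,|z^k|)=F(z^k,|z^k|).
\]

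From here I would argue finite termination: the LP solutions are taken at vertices of the fixed polyhedron $\mathcal{Z}$, of which there are finitely many, so a strictly decreasing objective sequence cannot persist indefinitely and the stopping test must fire after finitely many steps. For the stationarity conclusion, when $(\sgn(z^k)-\sgn(x^k))^{\top}x^{k+1}=0$ I would show that $x^{k+1}$ is consistent with the orthant of $z^k$, so that $F$ and $L_k$ coincide at $(x^{k+1},y^{k+1})$; since $(x^{k+1},y^{k+1})$ already minimizes $L_k$ over $\mathcal{Z}$, it then also minimizes the linearization of $F$ around itself, which is exactly the minimum-principle necessary optimality condition for \eqref{Concave2}.

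The main obstacle is closing the monotone loop through the Newton step. The chain above bounds $F(x^{k+1},y^{k+1})$ by $F(z^k,|z^k|)$, but the hybrid replaces $x^k$ by $z^k$ before linearizing, so unlike the pure successive-linearization algorithm one does not automatically get $F(x^{k+1},y^{k+1})\le F(x^k,y^k)$; the real work is to produce the correct monotone potential, which amounts to showing that the Newton substitution does not increase the objective, e.g. $F(z^k,|z^k|)\le F(x^k,y^k)$, so that strict descent across full iterations is guaranteed. A secondary delicate point is the bookkeeping between the stated algebraic stopping condition $(\sgn(z^k)-\sgn(x^k))^{\top}x^{k+1}=0$ and the orthant-consistency $\sgn(z^k)^{\top}x^{k+1}=e^{\top}|x^{k+1}|$ actually needed for stationarity, where the convention $\sgn(0)=-1$ and the treatment of vanishing components of $x^{k+1}$ must be handled with care.
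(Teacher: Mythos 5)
First, a framing remark: the survey itself states Proposition~\ref{converghybrid} \emph{without} proof, quoting it from Mangasarian's hybrid-algorithm paper~\cite{mangasarian2015hybrid}, so your attempt has to be measured against the successive-linearization descent argument used there and in the theorem following Algorithm~\ref{sla}. Against that benchmark there is a genuine gap --- one you flag yourself, but it is not a peripheral obstacle: it is the entire content of the proposition. Your chain
\begin{align*}
F(x^{k+1},y^{k+1})\le L_k(x^{k+1},y^{k+1})\le L_k(z^k,|z^k|)=F(z^k,|z^k|)
\end{align*}
is correct (including the nice verification that $(z^k,|z^k|)\in\mathcal{Z}$ via the Newton equation), but it compares the new iterate with the Newton point rather than with the previous iterate, and it never invokes the hypothesis $(\sgn(z^k)-\sgn(x^k))^{\top}x^{k+1}=0$ at all. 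Since the chain holds unconditionally while the conclusion certainly does not, no completion of this route can succeed without bringing the hypothesis in; and indeed nothing ties $F(z^k,|z^k|)$ to $F(x^k,y^k)$, because the Newton substitution can increase the objective of \eqref{Concave2}. The intended mechanism is different: write the supergradient (concavity) inequality at the \emph{previous LP iterate} $(x^k,y^k)$, whose supergradient carries $\sgn(x^k)$,
\begin{align*}
F(x^{k+1},y^{k+1})-F(x^k,y^k)\le -(e^{\top}A+\sgn(x^k)^{\top})(x^{k+1}-x^k)+2e^{\top}(y^{k+1}-y^k),
\end{align*}
and use the hypothesis to trade $\sgn(x^k)$ for $\sgn(z^k)$, so that the right-hand side becomes $\phi_k(x^{k+1},y^{k+1})-\phi_k(x^k,y^k)$, where $\phi_k(x,y)=-(e^{\top}A+\sgn(z^k)^{\top})x+2e^{\top}y$ is the $k$-th LP objective. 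Because $\mathcal{Z}$ is the same polyhedron at every iteration, $(x^k,y^k)$ is feasible for the $k$-th LP, so LP optimality of $(x^{k+1},y^{k+1})$ makes that difference nonpositive; monotone decrease, finiteness of the vertex set of $\mathcal{Z}$, and the equality case then yield finite termination at a point satisfying the minimum principle, exactly as in the proof for Algorithm~\ref{sla}. Comparing against $(x^k,y^k)$, not against $(z^k,|z^k|)$, is what closes the loop you left open.

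Two further corrections. First, your stationarity step is also invalid: the condition $(\sgn(z^k)-\sgn(x^k))^{\top}x^{k+1}=0$ only says that two fixed sign vectors have equal inner products with $x^{k+1}$; it does not imply that $x^{k+1}$ lies in the closed orthant of $z^k$ (i.e.\ $\sgn(z^k)^{\top}x^{k+1}=e^{\top}|x^{k+1}|$), so you may not conclude that $F$ and $L_k$ coincide at $(x^{k+1},y^{k+1})$. Second, your instinct that the bookkeeping around the hypothesis is delicate is justified, but it cuts against the literal statement rather than in its favor: carrying out the sign swap above on the whole difference $x^{k+1}-x^k$ really uses $(\sgn(z^k)-\sgn(x^k))^{\top}(x^{k+1}-x^k)=0$, whereas the hypothesis applied only to $x^{k+1}$ leaves the residual $(\sgn(x^k)-\sgn(z^k))^{\top}x^k=e^{\top}|x^k|-\sgn(z^k)^{\top}x^k\ge0$, and descent follows only when it vanishes (sign accord of $z^k$ with $x^k$ on the support of $x^k$). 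Any complete write-up must either read the hypothesis on the difference or dispose of this term explicitly --- but that is a refinement of the supergradient route through $(x^k,y^k)$, not a rescue of the route through $(z^k,|z^k|)$.
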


The computational results illustrated the usefulness of Algorithm~\ref{hybridmethod} by solving $100\%$  of a sequence of $100$ randomly generated instances AVEs in $\R^{50}$ to $\R^{1000}$ to an accuracy of $10^{-8}$. In contrast, Algorithm~\ref{sla} solved $95\%$ of the problems to an accuracy of $10^{-6}$.

Zamani and Hlad\'{i}k \cite{zamani2021new} also developed a method based on the formulation \eqref{Concave}. First, by using below Example~\ref{ZHexampl1}, they demonstrated that Algorithm~\ref{sla} is not convergent to a solution of the AVE in general. Then, they introduced a new method, described in Algorithm~\ref{hladick}.  Therein, $V$ denotes the set of vertices of the set $\mathcal{S}$ from \eqref{dfS}, and $Adj_{\mathcal{S}}(x) \subseteq V$ denotes the adjacent vertices to the vertex $x \in V$.
Under mild conditions, Algorithm~\ref{hladick} converges to a solution of the AVE; the original formulation of the convergence is even more general than that presented below in Theorem~\ref{thmAlgZamHlaConv}.

\begin{example}\label{ZHexampl1}
Let 
\begin{equation*}
A=\begin{pmatrix}
3&1\\
6&5
\end{pmatrix},\ \ 
b=\begin{pmatrix}
3\\
10
\end{pmatrix},\ \ 
\bar{x}=\begin{pmatrix}
\frac{5}{3}\\
0
\end{pmatrix},\ \ 
x^{\star}=\begin{pmatrix}
1\\
1
\end{pmatrix}.
\end{equation*}
The unique solution $x^{\star}$ of AVE (\ref{Eq2}) is contrasted with the unique optimal solution $\bar{x}$ of the following linear program:
\begin{align}\nonumber
 \min\ (e^{\top}A-\sgn(\bar{x})^{\top})x \ \ \st\ \ (A+I_n)x\geq b,\, (A-I_n)x\geq b.
%	&\st \ \ |x|\leq Ax-b. \nonumber
\end{align}
It means that Algorithm \ref{sla} is not necessarily convergent to a solution of the AVE. 
\end{example}

\begin{comment}
    
\begin{example}\label{ZHexampl2}
{\mh[If it does not illustrate the methods presented here, then remove] }
Let
\begin{equation*}
A=\begin{pmatrix}
-5&-7&10\\
-7&8&4\\
1&-2&1
\end{pmatrix},\ \ 
b=\begin{pmatrix}
-3\\
4\\
-1
\end{pmatrix}.
\end{equation*}
One solution of the problem (\ref{Eq2}) is the vector of ones. Suppose that $x^0 = 0.$ One can
verify that Algorithm  in \cite{mangasarian2007absolute} {\mh[Is it some algorithm presented here?]}, generates alternatively $\bar{x} = (-0.0149, 0.522, 0.0597)^{\top}$ and
$\hat{x} = (-0.0909, 0.4805, 0)^{\top}$, while $\partial \phi (x)$ is nonsingular for each $x \in V$.
\end{example}	
\end{comment}

\begin{algorithm}[t]
	\caption{Concave minimization approach}\label{hladick}
\begin{comment}
 	\begin{enumerate} 
	\item Pick $x^{0}\in \R^{n}$ arbitrarily.
	\item for $k=0,1,2,\dots$ do
	\item \qquad Set $x^{k+1}$ a vertex solution of the linear program,
	\begin{align*}
	\min_x\;e^{\top}(A-\diag(\sgn(x^{k})))x\ \ \st\ \  x\in\mathcal{S}.
	\end{align*}
	\item\qquad if $e^{\top} (A-\diag(\sgn(x^k)))(x^{k+1}-x^k)=0$ then
	\item\qquad\qquad if $f(x^k)=0$ then
	\item \qquad\qquad \qquad Stop.
	\item \qquad\qquad else
	\item \qquad\qquad\qquad Compute $Adj_{\mathcal{S}}(\bar{x})$ and set $x^{k+1}\in \arg\min_{x\in{Adj_{\mathcal{S}}}(\bar{x}) }f (x).$
	\item \qquad\qquad\qquad if $f(x^{k+1})\geq f(x^k)$ then
	\item \qquad\qquad\qquad\qquad Stop.
	\item \qquad\qquad\qquad end if
	\item \qquad\qquad end if
	\item \qquad end if
	\item end for
	\end{enumerate}
\end{comment}
\begin{algorithmic}[1]
\STATE 
pick $x^0\in \R^n$ arbitrarily
\FOR{$k=0,1,\dots$}
\STATE
compute $x^{k+1}$ a vertex solution of the linear program
\abovedisplayskip=1ex\belowdisplayskip=0ex
\begin{align*}
\min_x\ e^{\top}(A-D(x^{k}))x\ \ \st\ \  x\in\mathcal{S}
\end{align*}
\IF{ $e^{\top} (A-D(x^k))(x^{k+1}-x^k)=0$}
\IF{$f(x^k)=0$}
\RETURN{solution $x^{k}$}
\ELSE
\STATE
compute $Adj_{\mathcal{S}}(\bar{x})$ and take 
$x^{k+1}\in \arg\min_{x\in{Adj_{\mathcal{S}}}(\bar{x}) }f (x)$
\IF{$f(x^{k+1})\geq f(x^k)$}
\RETURN{approximate solution $x^{k}$}
\ENDIF
\ENDIF
\ENDIF
\ENDFOR
\end{algorithmic} 
\end{algorithm}

\begin{theorem}\label{thmAlgZamHlaConv}
%If $\partial\phi(x)$ is non-singular for each $x \in V$, in which $\phi(x)=Ax-| x|$
 If $[A-I_n,A+I_n]$ is regular, then Algorithm~\ref{hladick} terminates in
a finite number of steps at a solution of the AVE. 
\end{theorem}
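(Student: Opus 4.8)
The plan is to read Algorithm~\ref{hladick} as a vertex-following method for the concave program \eqref{minSum} and to invoke regularity only at the very last step. Write $g(x)=e^{\top}(Ax-b-|x|)$ for the objective and recall from \eqref{dfS} that on the feasible polyhedron $\mathcal{S}$ we have $g\ge0$, with $g(x)=0$ exactly when $x$ solves the AVE; on $\mathcal{S}$ the residual $f=Ax-|x|-b$ is itself nonnegative, so $g=e^{\top}f=\|f\|_1$ is the natural scalar merit compared in the algorithm. Since $|x_i|=\max_{s_i\in\{\pm1\}}s_ix_i$, the objective is the finite minimum $g=\min_{s\in\{\pm1\}^{n}}\ell_s$ of the affine functions $\ell_s(x)=e^{\top}(A-\diag(s))x-e^{\top}b$; in particular $g$ is concave and piecewise linear, and its gradient at $x^{k}$ is exactly the cost vector $e^{\top}A-\sgn(x^{k})^{\top}$ of the linear program solved in the algorithm. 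By Theorem~\ref{thmAveUnSol}, regularity of $[A-I_n,A+I_n]$ yields a \emph{unique} solution $x^{\star}$.

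First I would prove finite termination, which does not use regularity. Each linear subproblem has a vertex optimum because its cost $\ell_{s^{k}}$ dominates $g\ge0$ on $\mathcal{S}$ and is therefore bounded below. For a linearization step, concavity gives $g(x^{k+1})\le g(x^{k})+(e^{\top}A-\sgn(x^{k})^{\top})(x^{k+1}-x^{k})$, and optimality of $x^{k+1}$ together with $x^{k}\in\mathcal{S}$ (true for $k\ge1$) makes the last term $\le0$; when it is strictly negative $g$ strictly decreases, and when it is zero the algorithm branches to the adjacency step, which is accepted \emph{only} if it strictly decreases $g$. Hence $g$ strictly decreases along the accepted vertices, and since $\mathcal{S}$ has finitely many vertices none can recur, so the method stops after finitely many steps. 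I would also record here that $x^{\star}$ is a vertex of $\mathcal{S}$: if it lay in the relative interior of a positive-dimensional face $F$, then the concave, nonnegative $g$ vanishing at an interior point of $F$ would force $g\equiv0$ on $F$, contradicting uniqueness of the solution.

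The decisive point is to exclude the premature exit, i.e.\ to show that under regularity the algorithm never halts at a stationary vertex $v$ with $g(v)>0$ all of whose neighbours $v'$ satisfy $g(v')\ge g(v)$. Equivalently, I must show: \emph{every vertex $v$ of $\mathcal{S}$ with $g(v)>0$ has an adjacent vertex $v'$ with $g(v')<g(v)$.} Let $s_{0}=\sgn(v)$ be the active orthant, so $g$ coincides with $\ell_{s_{0}}$ near $v$ while $\ell_{s_{0}}\ge g$ globally, a supporting hyperplane touching at $v$. Regularity makes $A-\diag(s_{0})$ nonsingular, so $\ell_{s_{0}}$ has the single zero $x_{s_{0}}=(A-\diag(s_{0}))^{-1}b$; stationarity of $v$ together with $g(v)>0$ forces $x_{s_{0}}\notin\mathcal{S}$, for otherwise the linear program would attain at $x_{s_{0}}$ a value below that at $v$. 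One then compares $v$ with the true solution $x^{\star}$, whose orthant $s^{\star}$ satisfies $(A-\diag(s^{\star}))x^{\star}=b$ and $g(x^{\star})=0$, and uses the nonsingularity of every $A+\diag(s)$ (condition (v) of the corollary to Theorem~\ref{thmAveUnSol}) to exhibit a bounded descent edge of $\mathcal{S}$ at $v$ that terminates at a strictly lower vertex.

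I expect this last implication to be the main obstacle, because it is genuinely false for arbitrary concave piecewise-linear functions: for instance $\min(5-2x_1-2x_2,\,3x_1+3x_2)$ on $[0,1]^{2}$ has the non-optimal vertex $(1,1)$ whose two neighbours are both worse, so no descent edge exists there. The argument must therefore use the special arrangement of the $2^{n}$ pieces $\ell_s$ and the regularity hypothesis in an essential way, ruling out such spurious vertex-local minima, rather than any soft connectivity property of the vertex graph; Example~\ref{ZHexampl1}, in which the stuck linearization vertex $(5/3,0)$ turns out to be adjacent to $x^{\star}=(1,1)$, is precisely the phenomenon this lemma must explain in general. Granting it, the proof closes at once: the run is finite, $g$ is strictly decreasing and bounded below by $0$, and the only admissible way to stop is at a vertex with $g=0$, which by uniqueness is $x^{\star}$.
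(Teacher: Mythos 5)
Your proposal correctly handles the routine part of the argument, but it leaves the theorem's essential content unproven, and you say so yourself. The framework is sound: on $\mathcal{S}$ the residual $f(x)=Ax-|x|-b$ is nonnegative, so $g(x)=e^{\top}f(x)$ is a concave piecewise-linear merit function whose linearization at $x^{k}$ is exactly the LP cost in Algorithm~\ref{hladick}; the LP subproblems are bounded below (since $\ell_{s^{k}}(x)-e^{\top}b\geq g(x)\geq 0$ on $\mathcal{S}$) and $\mathcal{S}$ is pointed, so vertex optima exist; accepted iterates strictly decrease $g$, and since $\mathcal{S}$ has finitely many vertices the run is finite; and your argument that the unique solution $x^{\star}$ must be a vertex of $\mathcal{S}$ (a nonnegative concave function vanishing at a relative interior point of a face vanishes on the whole face, contradicting uniqueness) is correct. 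None of this uses regularity, and none of it is where the difficulty lies.

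The gap is the lemma you label ``the decisive point'': that under regularity of $[A-I_n,A+I_n]$, every vertex $v$ of $\mathcal{S}$ with $g(v)>0$ (or at least every such vertex at which the LP stagnates) has an adjacent vertex with strictly smaller $g$. This is precisely what rules out the premature exit in the adjacency branch of the algorithm, i.e.\ it \emph{is} the theorem; everything else is bookkeeping. You do not prove it. The sketch you offer --- nonsingularity of $A-\diag(s_0)$ places the zero $x_{s_0}$ of the active piece outside $\mathcal{S}$, then ``compare $v$ with $x^{\star}$'' and ``use the nonsingularity of every $A+\diag(s)$ to exhibit a bounded descent edge terminating at a strictly lower vertex'' --- names the ingredients but supplies no mechanism: why should any edge of $\mathcal{S}$ leaving $v$ be a descent direction for $g$, and why should $g$ still be lower at the \emph{far endpoint} of that edge (descent along an edge of a concave function can be lost before the next vertex is reached)? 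Your own counterexample $\min(5-2x_1-2x_2,\,3x_1+3x_2)$ on $[0,1]^2$ shows the claim is false for general concave piecewise-linear objectives, so it cannot follow from the soft structure you have set up; it must be extracted from the specific interaction between the pieces $\ell_s$, the constraints defining $\mathcal{S}$, and regularity. Since you explicitly write ``granting it, the proof closes at once,'' the proposal is an honest reduction of the theorem to an unproven lemma, not a proof. Note also that the paper itself does not prove this theorem --- it cites Zamani and Hlad\'{\i}k \cite{zamani2021new}, where the missing combinatorial-geometric argument is carried out --- so there is no shortcut in the survey you could have leaned on.
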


\paragraph{Other Optimization Approaches.}
Mangasarian \cite{mangasarian2012primal} proposed a bilinear optimization formulation of the AVE. It is based on a linear programming relaxation~\eqref{mnaZ}, its dual problem and achieves a sort of strict complementarity for some variables.

A similar idea is used in \cite{mangasarian2013absolute}; however, the linear programming relaxation is constructed in a different way. The variables are replaced with a difference between the positive and negative parts, which is relaxed.

From another perspective, it is evident that the AVE can be addressed by solving the optimization problem
\begin{align}\label{minAveQf}
    \min_{x\in\R^n}\ \|Ax-|x|-b\|^2.
\end{align}
However, the objective function is nonconvex and nonsmooth. 
Shahsavari and Ketabchi~\cite{shahsavari2021proximal} proposed two methods, the proximal difference-of-convex algorithm and the proximal subgradient method; by the numerical tests, the latter performed better.
To solve \eqref{minAveQf}, Moosaei et al.~\cite{moosaei2015some} utilized another approach; a generalized Newton method with a line-search based on the Armijo rule.

Noor et al.~\cite{noor2012iterative} approached solving the AVE by minimization of the piecewise quadratic function
$$
 x^{\top}Ax - | x|^{\top}x- 2b^{\top}x.
$$
Later, Iqbal et al.~\cite{iqbal2015levenberg} adapted the Levenberg–Marquardt method, which can be seen as a sort of combination of steepest descent and the Gauss–Newton method. 
Pham Dinh et al.~\cite{DinHo2017} approached the problem by means of DC (Difference of Convex functions) programming.

%%%%%%%%%%%%%%%%%%%%%%%%%%%%%%%%%
%\newpage
\subsection{Newton Methods}\label{ssNewtonMethod}

Recall from \eqref{FunctionAVE} that solving the AVE means finding a foot of a nonlinear function $f(x) = Ax - |x| - b$. To this end, we can employ Newton-like iterations. Since $f(x)$ is nonsmooth, we have to consider generalized Newton methods.

%In this part, we introduce some types of  Newton method that have solved the absolute value equations. Indeed, Mangasarian~\cite{mangasarian2009generalized} introduced a Newton-based algorithm for solving the AVE, and since then this topic has gotten much attention from researchers. \\
 
As a pioneering scholar, Mangasarian \cite{mangasarian2009generalized} developed a generalized Newton approach for solving the AVE.\footnote{Independently, Brugnano and Casulli \cite{BruCas2008} derived a Newton-type method for solving the AVE that results from solving a hydrodynamic model.} This approach is similar to the conventional Newton method, except it utilises a generalized Jacobian because of the nonsmoothness term of the AVE. A generalized Jacobian of the function (\ref{FunctionAVE}) is given by $\partial f(x)\coloneqq A-D(x)$, where $D(x)\coloneqq\diag(\sgn(x))$. Since $Ax-|x|=(A-D(x))x$, the Newton iteration is formulated as follows,
\begin{align}\label{Gn1}
x^{k+1}
 &=x^k-\big(A-D(x^k)\big)^{-1}(Ax^k-|x^k|-b)\nonumber\\
%\Rightarrow \  & x^{k+1}
 &=(A-D(x^k))^{-1} b, \qquad k=0, 1, 2, \dots,
\end{align}
where $x^{0}\in\R^{n}$ is an arbitrary starting point. The method is displayed in Algorithm~\ref{algNewtonMethod}. From another point of view, 
\eqref{Gn1} is obtained by a linearization of the AVE around $x^k$, yielding a linear system $(A-D(x^k))x=b$.

\begin{algorithm}[t]
\caption{Generalized Newton method}\label{algNewtonMethod}
\begin{comment}
\begin{enumerate} 
\item Pick $x^{0}\in \R^{n}$ arbitrarily.
\item for $ k=0, 1, 2, \dots$ do
\item \quad
%\begin{align}\label{itAlgPicardBasic}
$x^{k+1}\coloneqq (A-D(x^k))^{-1} b$
%\end{align}
\item 
\quad if $x^{k+1}=x^{k}$, then stop; vector $x^k$ solves AVE.
\item end for
\end{enumerate}
\end{comment}

\begin{algorithmic}[1]
\STATE 
pick $x^0\in \R^n$ arbitrarily
\FOR{$k=0, 1, \dots $}
\STATE
$x^{k+1}\coloneqq (A-D(x^k))^{-1} b$
\IF{$x^{k+1}=x^{k}$}
\RETURN{solution $x^{k}$}
\ENDIF
\ENDFOR
\end{algorithmic} 
\end{algorithm}

\begin{comment}
{\hm [If you remove Lemma \ref{convManga1}, then this must be removed.] [He proved the following lemma to show the convergence of his proposed method.]}

\begin{lemma}\label{convManga1}{\mh[I'd remove it.] \hm[Ok. Then some dependencies on it in the subsequent content should be removed accordingly. I mentioned them.]}
 Under the assumption that
$\| (A-D)^{-1}\| <\frac{1}{3}$ 
 for any diagonal matrix $D$ with diagonal elements of $+1$, $-1$ or $0$,
the generalized Newton iteration (\ref{Gn1}) converges linearly from any starting point to a solution $x$ for any solvable AVE \eqref{Eq2}.
\end{lemma} 
\end{comment}

%Sufficient conditions for the linear convergence of this method is presented in the following proposition:

%\begin{proposition}\label{ConvManga}
%%Let $\| A^{-1}\| <\frac{1}{4}$ and $\| D(x^{i})\|\neq 0$. Then, the AVE \eqref{Eq2} is uniquely solvable for any $b$ and the generalized Newton iteration \eqref{Gn1} is well defined and converges linearly to the unique solution of AVE from any starting point $x^0$.
%MH: condition $\| D(x^{i})\|\neq 0$ is not needed
%If $\| A^{-1}\| <\frac{1}{4}$, then the generalized Newton iteration \eqref{Gn1} is well defined and converges linearly to the unique solution of AVE from any starting point~$x^0$.
%\end{proposition}

Mangasarian \cite{mangasarian2009generalized} proved that the Newton iterations converge linearly to the unique solution of the AVE from any starting point~$x^0$ under the assumption that $\| A^{-1}\| <\frac{1}{4}$. The sufficient unique solvability condition \eqref{condRegSuff2} reads as $\| A^{-1}\| <1$, so it is a natural question whether the assumption can be weakened to this form. Indeed, 
Theorem~\ref{WeekerManga} below uses the more general condition. Further, Griewank et al.~\cite{GriBer2015} showed that if $\| A^{-1}\|_p <\frac{1}{3}$ for some $p$-norm, then the method converges globally in finitely many iterations. Radons and Rump~\cite{RadRum2022} showed that the Newton method terminates in at most $n+1$ steps provided at least one of the four conditions for the Signed Gaussian elimination (page~\pageref{condSgnGE}) holds true.

%%%
\paragraph{Smoothing Methods.} 
Since the AVE is a nonsmooth problem, it is tempting to approximate the nonsmooth absolute value function by a smooth approximate \cite{caccetta2011globally,cruz2016global,li2016onmodified,saheya2018numerical,wang2011interval,zhang2009global}. 

Zhang and Wei  \cite{zhang2009global} proposed a generalized Newton method, combining semismooth and smooth Newton steps. They used the smoothing function to construct a Newton iterate that provides a descent in the function value. The smoothing parameter is gradually reduced to zero using a strategic approach, leading to a solution of the AVE. The authors establish the global and finite convergence of the algorithm under the assumption that the interval matrix $[A-I_n,A+I_n]$ is regular, which ensures the uniqueness of the solution (see Theorem~\ref{thmAveUnSol}).

Caccetta et al.~\cite{caccetta2011globally} employed the smoothing Newton algorithm, as described in \cite{qi2002smoothing}, for solving the AVE.  The algorithm utilizes a smoothing function $G_{\epsilon}(x)=Ax-\sqrt{x^{2}+\epsilon^{2}} -b$, where $\epsilon$ is a smoothing parameter and $\sqrt{x^{2}+\epsilon^{2}}\coloneqq(\sqrt{x_1^2+\epsilon^2},\dots,\sqrt{x_n^2+\epsilon^2})^\top$. At each iteration of the algorithm, a linear equation involving a Jacobian matrix is solved to obtain a search direction. The algorithm also includes a line search technique to ensure the descent property. 

This method successfully solved 297 out of 300 instances with an accuracy of $10^{-6}$. Moreover, it exhibited an average iteration count of $5.67$ per instance.
The theoretical convergence is proved under a quite weak assumption (in comparison with other methods).

\begin{theorem}\label{WeekerManga}
Suppose that $\| A^{-1}\|<1$. Then, an infinite bounded sequence $\{x^k\}$ is generated by the smoothing Newton algorithm \cite{qi2002smoothing} and the whole sequence $\{x^k\}$ converges quadratically to the unique solution of the AVE. 
\end{theorem}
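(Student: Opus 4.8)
The plan is to verify the three hypotheses under which the general smoothing Newton framework of Qi et al.~\cite{qi2002smoothing} guarantees global convergence with a local quadratic rate: (a) unique solvability of the AVE, (b) nonsingularity together with a uniform bound on the inverses of all relevant (generalized) Jacobians, and (c) coercivity of the smoothed residual, which forces boundedness of the generated sequence. The smoothing function in play is $G_\epsilon(x)=Ax-\sqrt{x^2+\epsilon^2}-b$, whose Jacobian is $A-D_\epsilon(x)$ with $D_\epsilon(x)=\diag\!\big(x_i/\sqrt{x_i^2+\epsilon^2}\big)$, a diagonal matrix whose entries lie in $(-1,1)$. First I would record that $\|A^{-1}\|<1$ is exactly condition~\eqref{condRegSuff2}, since $\|A^{-1}\|=1/\sigma_{\min}(A)$; hence the AVE has a unique solution $x^\star$, which settles (a).

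Next, for any diagonal matrix $D$ with $\|D\|\le1$ I would write $A-D=A(I_n-A^{-1}D)$ and observe that $\|A^{-1}D\|\le\|A^{-1}\|\,\|D\|\le\|A^{-1}\|<1$. Thus $\rho(A^{-1}D)<1$, the Neumann series applies, and $A-D$ is nonsingular with
$$
\|(A-D)^{-1}\|=\|(I_n-A^{-1}D)^{-1}A^{-1}\|\le\frac{\|A^{-1}\|}{1-\|A^{-1}\|}.
$$
This single estimate covers both the smoothing Jacobians $A-D_\epsilon(x)$ for all $\epsilon,x$ and every element $A-\diag(d)$, $d\in[-1,1]^n$, of the limiting generalized Jacobian at $x^\star$. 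It therefore delivers the nonsingularity required at accumulation points and the uniform boundedness of $\{(G'_{\epsilon^k}(x^k))^{-1}\}$ needed to pass to the quadratic phase, settling (b).

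For (c) I would use the componentwise bound $\sqrt{x_i^2+\epsilon^2}\le|x_i|+\epsilon$, giving $\|\sqrt{x^2+\epsilon^2}\|\le\|x\|+\sqrt{n}\,\epsilon$, whence
$$
\|G_\epsilon(x)\|\ge\|Ax\|-\|x\|-\sqrt{n}\,\epsilon-\|b\|\ge(\sigma_{\min}(A)-1)\|x\|-\sqrt{n}\,\epsilon-\|b\|.
$$
Because $\sigma_{\min}(A)>1$, the coefficient of $\|x\|$ is strictly positive, so $\|G_\epsilon(x)\|\to\infty$ as $\|x\|\to\infty$ uniformly for bounded $\epsilon$; the relevant level sets are then bounded, the iteration is well defined, and $\{x^k\}$ is an infinite bounded sequence converging to $x^\star$. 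Finally, since $t\mapsto|t|$ is strongly semismooth (being piecewise linear) and $\sqrt{t^2+\epsilon^2}$ is a Jacobian-consistent smoothing with $\big|\sqrt{t^2+\epsilon^2}-|t|\big|\le\epsilon$, the quadratic approximation hypothesis holds at $x^\star$. Invoking the convergence theorem of~\cite{qi2002smoothing} with (a)--(c) in hand yields quadratic convergence of the whole sequence.

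The main obstacle I anticipate is (c): one must make the coercivity estimate uniform in the smoothing parameter and match it precisely to the boundedness conditions required by the algorithm's convergence assumptions, and confirm that the weakened hypothesis $\|A^{-1}\|<1$---rather than Mangasarian's $\|A^{-1}\|<\tfrac14$---genuinely suffices to drive both the global phase and the transition to the local quadratic phase. The nonsingularity argument in (b) is by comparison the routine ingredient.
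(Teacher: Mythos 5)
Your proposal is correct and follows essentially the same route as the result the paper is reporting: the paper states this theorem without proof, as a survey of Caccetta et al.~\cite{caccetta2011globally}, whose argument consists precisely of verifying the hypotheses of the smoothing Newton framework of~\cite{qi2002smoothing} under $\|A^{-1}\|<1$ --- unique solvability via $\sigma_{\min}(A)>1$, nonsingularity and the uniform Neumann-series bound $\|(A-D)^{-1}\|\leq \|A^{-1}\|/(1-\|A^{-1}\|)$ for all diagonal $D$ with $\|D\|\leq 1$, coercivity of $\|G_\epsilon(x)\|$ giving bounded level sets, and the quadratic approximation property of the smoothing function. Your three ingredients (a)--(c) match these one for one, so there is nothing to add beyond noting that the coercivity step you flagged as the main obstacle is handled in the source exactly as you sketch it.
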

 
%The proposed Algorithm~\ref{SNA} has been rigorously proven to possess global convergence, with a quadratic convergence rate when the singular values of matrix A exceed 1. Notably, the condition outlined in theorem~\ref{WeekerManga} is even weaker than the proposition~\ref{ConvManga} presented in the influential work by Mangasarian~\cite{mangasarian2009generalized}.

%This paper's method achieved remarkable results, successfully solving 297 out of 300 instances with an impressive accuracy of $10^{-6}$. Moreover, it exhibited an average iteration count of 5.67 per instance and demonstrated an efficient solving time, averaging 2.1139 seconds.

Tang and Zhou \cite{tang2019quadratically} proposed a descent
method for solving the GAVE. They employed a smoothing function and the descent direction 
$$
d = \partial h_{p}(w)^{- 1}
\parentheses*{- h_{p}(w) + \gamma\cdot\min\{ 1,g_{p}(w)\}e_1},
$$
where $\gamma \in ( 0,\sqrt{2} )$, $p>1$,  
$h_{p}(w) = h_{p}(\varepsilon,x) = \parentheses[\big]{\varepsilon,\, \parentheses[\big]{Ax + B\sqrt[p]{|\varepsilon|^{p} + |x|^{p}} - b}{}^\top}{}^\top$,
\(g_{p}(w) = \frac{1}{2}{\| h_{p}(w) \|}^{2}\ \), and
\(\varepsilon\) is a smoothing parameter. 
They proved that the method converges to the solution of the GAVE quadratically.

\begin{comment}
{\hm [Saeed Ketabchi]
Also,  Tang and Zhou \cite{tang2019quadratically} propose a descent
method for solving the GAVE $Ax+B|x|= b$ when the interval matrix $ [A-| B|, A + | B|]$
 is regular. This descent direction is defined as
follows :

$$d = {H_{p}^{'}(w)}^{- 1}[ - H_{p}(w) + \gamma\min\left\{ 1,F_{p}(w)e \right\}].$$

Where, \(\gamma \in \left( 0,\sqrt{2} \right),\) e=(0,1),
\(H_{p} = \lbrack\varepsilon;Ax + B\sqrt[p]{|\varepsilon|^{p} + |x|^{p}} - b\rbrack\),
\(F_{p}(w) = \frac{1}{2}{|\left| H_{p}(w) \right||}^{2}\ \)and
\(\varepsilon\) is a smoothing parameter .

They prove that the mentioned method converges to the solution of the
absolute value equation quadratically.

\begin{theorem} 
Let \(w_{k}\) \emph{=(}\(\varepsilon_{k}\)\emph{,} \(x_{k}\)\emph{) be the iteration
sequence generated by Algorithm3.1 in \cite{tang2019quadratically}. Then \{}\(w_{k}\) \emph{\}
is bounded. Moreover, any accumulation point} \(w^{*}\) \emph{:=
(}\(\varepsilon^{*}\)\emph{,} \(x^{*}\)\emph{) of \{}\(w_{k}\)\emph{\}
satisfies}

\(H_{p}(w)\) \emph{= 0.}
\end{theorem}

They also solve the generated GAVE problem by using the smoothing-type algorithm studied by Jiang and Zhang \cite{jiang2013smoothing}, which was designed to solve the GAVE based on the smoothing function
$\varphi_{p}(\varepsilon,x) =\sqrt[p]{|\varepsilon|^{p} + |x|^{p}}$.\\
}
\end{comment}

	 Since the generalized Newton method needs the exact solution of a linear system of equations, it can be computationally expensive and may not be justified. Therefore, Cruz et al.~\cite{cruz2016global} proposed an inexact semi-smooth Newton method, starting at $x^{0}\in \R^n$ and residual relative error tolerance $\theta\geq 0$, by
\begin{equation}\label{bound}
 \| (A-D(x^{k}))x^{k+1}-b\|\leq \theta\cdot f(x^{k}),\qquad k=0,1,2,\dots
 \end{equation}
  %where $F(x)\coloneqq Ax-| x|-b$.
	 Note that, in absence of errors, i.e., when $\theta=0$, the above iteration retrieves $(A-D(x^{k}))x^{k+1}=b$, yielding formula~\eqref{Gn1}.    
  Global Q-linear convergence \cite{jay2001note} was proved in the following theorem. 
  %Note that the assumption $ \| A^{-1}\| <\frac{1}{3}$ is weaker than the original Mangasarian condition  $ \| A^{-1}\| <\frac{1}{4}$, but stronger that that from Theorem~\ref{WeekerManga}.

% \begin{theorem}
%Let $A \in \R^{n\times n}, b \in \R^n$ and $0 \leq \theta < 1$. Assume that $A- D(x)$ is invertible for all $x \in \R^n$. Then, the inexact semi-smooth Newton sequence $\{x_k\}$, given by (\ref{bound}), with any starting point $x_{0} \in \R^n$ and residual relative error tolerance $\theta$, is well defined. Moreover, if 
%	 	$$ \| [A-D(x)]^{-1}\|< \frac{1}{2},\quad \forall\;x\in \R^n,$$
%	then, AVE (\ref{Eq2}) has unique solution. Additionally, if
%$$ 0\leq \theta <\frac{1-2\| [A-D(x)]^{-1}\|}{\| [A-D(x)]^{-1}\|(\| A-D(x)\|+2)},\quad  \forall\;x\in \R^n,$$
%	then $\{x_k\}$ converges Q-linearly to $x_{\ast} \in \R^n$, a unique solution of (\ref{Eq2}), as follows
%	$$ \| x_{k+1}-x_{\ast}\|\leq \| [A-D(x_k)]^{-1}\| [\theta (\| (A-D(x_k)\| +2)+2]\| x_{k}-x_{\ast}\|,$$
%	for all $k=0,1,2,\dots.$ 
% \end{theorem} 

%{\hm [If you remove Lemma \ref{convManga1}, then this must be removed.] [ Note that assumption of $\| [A-D(x)]^{-1}\| < \frac{1}{2}$, is  weaker than that used by Mangasarian~\cite{mangasarian2009generalized} in Lemma~\ref{convManga1} i.e., $\| [A-D(x)]^{-1} \| < \frac{1}{3}$ for all $x\in \R^n$.]}

 \begin{theorem}
Suppose that $\| A^{-1}\|<\frac{1}{3}$ and 
 $$
 0\leq \theta< \frac{1-3\| A^{-1}\| }{\| A^{-1}\|(\| A\|+3)}.
 $$
Then the inexact semi-smooth Newton sequence $\{x^k\}$ converges globally and Q-linearly to $x^{\ast} \in \R^n$, the unique solution of the AVE, as follows
$$ 
\| x^{k+1}-x^{\ast}\|
\leq
 \frac{\| A^{-1}\|}{1-\| A^{-1}\|} 
  \parentheses[\big]{\theta (\| A\| +3)+2}\| x^{k}-x^{\ast}\|,\quad  k=0,1,2,\dots
$$
 \end{theorem}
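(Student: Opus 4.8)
The plan is to convert the inexact Newton relation \eqref{bound} into a one-step contraction estimate for the error $x^k-x^*$, where $x^*$ is the unique solution of the AVE; its existence and uniqueness is guaranteed because $\|A^{-1}\|<1$ is equivalent to $\sigma_{\min}(A)>1$, i.e.\ condition \eqref{condRegSuff2}. Writing $D_k:=D(x^k)$ and $D^*:=D(x^*)$, so that $|x^k|=D_kx^k$ and $|x^*|=D^*x^*$, the iteration \eqref{bound} reads $(A-D_k)x^{k+1}=b+r^{k+1}$ with residual $r^{k+1}:=(A-D_k)x^{k+1}-b$ satisfying $\|r^{k+1}\|\le\theta\|f(x^k)\|$, while the solution obeys $(A-D^*)x^*=b$.

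First I would record two preliminaries. Since each $D_k$ is diagonal with entries $\pm1$, we have $\|D_k\|\le1$, and $A-D_k=A(I_n-A^{-1}D_k)$ is nonsingular because $\|A^{-1}D_k\|\le\|A^{-1}\|<1$; the Neumann series then gives the uniform bound $\|(A-D_k)^{-1}\|\le\|A^{-1}\|/(1-\|A^{-1}\|)$, so every iterate is well defined. Second, using $f(x^k)=(A-D_k)x^k-b$ and $b=(A-D^*)x^*$, I would split $f(x^k)=(A-D_k)(x^k-x^*)+(D^*-D_k)x^*$, whence $\|f(x^k)\|\le(\|A\|+1)\|x^k-x^*\|+\|(D_k-D^*)x^*\|$.

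The key step, and the main obstacle, is the one genuinely nonsmooth estimate $\|(D_k-D^*)x^*\|\le2\|x^k-x^*\|$, which converts the discrete jump of the sign pattern between $x^k$ and $x^*$ into a quantity controlled by the error. I would obtain it from the identity $(D_k-D^*)x^*=D_kx^*-|x^*|=D_k(x^*-x^k)+(|x^k|-|x^*|)$, combined with $\|D_k\|\le1$ and the $1$-Lipschitz property $\||x^k|-|x^*|\|\le\|x^k-x^*\|$. Feeding this into the previous bound yields $\|f(x^k)\|\le(\|A\|+3)\|x^k-x^*\|$, which is exactly where the constant $\|A\|+3$ in the statement comes from.

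Finally I would assemble the recursion. Subtracting $(A-D_k)x^*=b+(D^*-D_k)x^*$ from $(A-D_k)x^{k+1}=b+r^{k+1}$ gives $(A-D_k)(x^{k+1}-x^*)=r^{k+1}+(D_k-D^*)x^*$; applying $(A-D_k)^{-1}$ together with the inverse-norm bound, the residual bound $\|r^{k+1}\|\le\theta\|f(x^k)\|\le\theta(\|A\|+3)\|x^k-x^*\|$, and $\|(D_k-D^*)x^*\|\le2\|x^k-x^*\|$ produces precisely the displayed inequality with factor $q:=\tfrac{\|A^{-1}\|}{1-\|A^{-1}\|}\bigl(\theta(\|A\|+3)+2\bigr)$. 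It then remains only to verify $q<1$: clearing denominators, this is equivalent to $\theta(\|A\|+3)<\tfrac{1-3\|A^{-1}\|}{\|A^{-1}\|}$, i.e.\ exactly the hypothesis $\|A^{-1}\|<\tfrac13$ (which makes the right-hand side positive) together with the stated upper bound on $\theta$. Thus $\|x^{k+1}-x^*\|\le q\|x^k-x^*\|$ with $q<1$ for every starting point, giving global Q-linear convergence to $x^*$.
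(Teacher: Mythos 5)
Your proof is correct, and it recovers exactly the stated contraction factor and the threshold on $\theta$: the Banach-type bound $\|(A-D_k)^{-1}\|\le\|A^{-1}\|/(1-\|A^{-1}\|)$, the $1$-Lipschitz property of $|\cdot|$ giving $\|(D_k-D^*)x^*\|\le 2\|x^k-x^*\|$, and the decomposition $f(x^k)=(A-D_k)(x^k-x^*)+(D^*-D_k)x^*$ yielding the constant $\|A\|+3$. Note that the survey itself states this theorem without proof, attributing it to Cruz et al.~\cite{cruz2016global}; your argument is essentially the one in that reference, so there is no discrepancy to report. (As a side remark, bounding the residual instead via $f(x^k)=A(x^k-x^*)-(|x^k|-|x^*|)$ would give $\|f(x^k)\|\le(\|A\|+1)\|x^k-x^*\|$ and hence a slightly sharper factor $\theta(\|A\|+1)+2$, but your route reproduces the theorem exactly as stated.)
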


Eventually, notice that extensive numerical comparisons of smoothing functions were performed by Saheya et al.~\cite{saheya2018numerical}.

%%%%
\paragraph{Some Other Variants of the Newton Method.} 
Motivated by Algorithm~\ref{algNewtonMethod},  a new version of the generalized Newton method was presented by Li in~\cite{li2016modified}. It is based on solving the equation
\begin{equation}\label{Eqli3}
f (x^k ) + ( \partial f(x^k) + I_n)(x^{k+1}-x^k ) = 0.
\end{equation}
Using $f(x^k)=(A-D(x^k))x^k-b$ and $\partial f(x^k)=A-D(x^k)$, vector $x^{k+1}$ is expressed as
 $$x^{k+1}=(A+I_n-D(x^k))^{-1}(x^k+b).$$
This modified generalized Newton method is globally and linearly convergent to the unique solution of the AVE provided 
  \[\|(A + I_n - D)^{-1} \|<\dfrac{1}{3}\]
 for any diagonal matrix $D$ with diagonal elements of $\pm1$ or~$0$.
 
 % \begin{theorem}
  %Suppose that
  %\[\|(A + I_n - D)^{-1} \|<\dfrac{1}{3}\]
 %for any diagonal matrix $D$ with diagonal elements of $\pm1$ or~$0$. Then, the modified generalized Newton method converges globally and linearly to the unique solution of AVE.
  %\end{theorem}

To avoid the generalized Jacobian matrix from being singular and to further accelerate the convergence of the generalized Newton method, Cao et al.~\cite{cao2021relaxed} proposed a new relaxed method based on iterations
\begin{align*}
x^{k+1} \coloneqq 
 \parentheses*{A -\theta\cdot  D(x^k )}^{-1} (b+(1-\theta)|x^k|),
\end{align*}
where $\theta$ is a parameter.

Feng and Liu~\cite{feng2016improved} designed a generalized Newton method, which is based on the classical ideas of Ostrowski~\cite{ostrowski1973solution} and Traub~\cite{traub1982iterative} for solving nonlinear equations in the real space.

\subsection{Picard Iterations} \label{PicardIter}

Another type of an AVE algorithm is based on the Picard iterative approach. In this way, the AVE is written as follows: 
\begin{align*}
	%Ax+|x|-b=
	Cx-F(x)=0,
\end{align*}
where $C\in\R^{n\times n}$ is nonsingular and $F\colon\R^n \rightarrow  \R^n$ is a nonlinear function. The iterative approach is then stated as follows: 
	\begin{align*}
	x^{k+1}=C^{-1}F(x^k), \quad k=0, 1, 2, \dots,
	\end{align*}
	where $x^0\in\R^n$ is an initial point. 
 
If we define $C\coloneqq A$ and subsequently set $F(x)=|x|+b$, then we obtain a basic iterative process outlined in Algorithm~\ref{rohn}. This methodology was originally proposed by Rohn et al.~\cite{rohn2014iterative}. They illustrated that when $\rho(|A^{-1}|)<1$, indicating the fulfillment of the sufficient unique solvability condition \eqref{condRegSuff1}, the approach exhibits linear convergence. Furthermore, they extended this strategy for solving the GAVE and demonstrated that, under specific conditions, the generated sequence linearly converges to the unique solution of the GAVE.

\begin{algorithm}[t]
\caption{Picard iterations}\label{rohn}
\begin{comment}
\begin{enumerate} 
\item Pick $x^{0}\in \R^{n}$ arbitrarily.
\item for $ k=0, 1, 2, \dots$ do
\item \quad
%\begin{align}\label{itAlgPicardBasic}
$x^{k+1}\coloneqq A^{-1}\big(|x^k|+b\big)$
%\end{align}
\item 
\quad if $x^{k+1}=x^{k}$, then stop; vector $x^k$ solves AVE.
\item end for
\end{enumerate}
\end{comment}
\begin{algorithmic}[1]
\STATE 
pick $x^0\in \R^n$ arbitrarily
\FOR{$k=0, 1, \dots $}
\STATE
$x^{k+1}\coloneqq A^{-1}\big(|x^k|+b\big)$
\IF{$x^{k+1}=x^{k}$}
\RETURN{solution $x^{k}$}
\ENDIF
\ENDFOR
\end{algorithmic} 
\end{algorithm}
	
	%In Algorithm \ref{rohn}, if we set $B=A$ and subsequently $F(x)=|x|-b$, then iterations have shown in the following form. }
	%\begin{align*}
	%x^{k+1}=A^{-1}\big(|x^k|-b\big), \quad k=1, 2, \dots
	%\end{align*}

\begin{theorem}\label{picardtheorem}
Let $A$ be nonsingular and $\rho(|A^{-1}|)<1$. Then Algorithm~\ref{rohn} initiated with $x^0\coloneqq A^{-1}b$ converges to the unique solution $x^{\star}$ of the AVE. 
\end{theorem}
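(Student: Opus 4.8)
The plan is to recognize the iteration map $\Phi(x)\coloneqq A^{-1}(|x|+b)$ as an entrywise contraction governed by the nonnegative matrix $|A^{-1}|$, and then invoke the classical fact that powers of a nonnegative matrix whose spectral radius is below one tend to zero. First I would observe that the hypothesis $\rho(|A^{-1}|)<1$ is precisely condition~\eqref{condRegSuff1}, so the AVE is uniquely solvable; denote its unique solution by $x^{\star}$. Since $Ax^{\star}-|x^{\star}|=b$ and $A$ is nonsingular, rearranging gives $x^{\star}=A^{-1}(|x^{\star}|+b)=\Phi(x^{\star})$, so that $x^{\star}$ is a fixed point of the iteration map and the sequence is well defined.

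Next I would estimate the error $x^{k+1}-x^{\star}$. Subtracting the fixed-point identity from the update $x^{k+1}=A^{-1}(|x^{k}|+b)$ yields
$$
x^{k+1}-x^{\star}=A^{-1}\bigl(|x^{k}|-|x^{\star}|\bigr).
$$
Taking absolute values entrywise and using $|My|\le|M|\,|y|$ together with the componentwise reverse triangle inequality $\bigl||x^{k}|-|x^{\star}|\bigr|\le|x^{k}-x^{\star}|$, and then multiplying by the nonnegative matrix $|A^{-1}|$ (which preserves the inequality), I obtain
$$
|x^{k+1}-x^{\star}|\le|A^{-1}|\,\bigl||x^{k}|-|x^{\star}|\bigr|\le|A^{-1}|\,|x^{k}-x^{\star}|.
$$
Iterating this bound from $k$ down to $0$ gives $|x^{k}-x^{\star}|\le|A^{-1}|^{\,k}\,|x^{0}-x^{\star}|$ for every $k$.

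Finally I would pass to the limit. Because $|A^{-1}|$ is entrywise nonnegative with $\rho(|A^{-1}|)<1$, the standard Perron--Frobenius-type fact that a nonnegative matrix with spectral radius below one satisfies $M^{k}\to0$ gives $|A^{-1}|^{\,k}\to0$; hence $|x^{k}-x^{\star}|\to0$ entrywise, i.e.\ $x^{k}\to x^{\star}$. The particular initialization $x^{0}\coloneqq A^{-1}b$ plays no essential role in the convergence itself---any starting point works by the same estimate---though it is the natural choice obtained by dropping the $|x|$ term on the right-hand side.

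The only nonroutine point, which I regard as the main obstacle, is assembling the two entrywise inequalities cleanly and citing the correct form of the statement $\rho(M)<1\Rightarrow M^{k}\to0$ for nonnegative $M$; in particular one must be careful that the chain of inequalities is preserved under left multiplication by $|A^{-1}|$, which holds precisely because $|A^{-1}|\ge0$. Everything else is a direct chain of componentwise estimates, so the proof is essentially a contraction-mapping argument phrased in the entrywise (rather than norm) order.
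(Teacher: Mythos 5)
Your proof is correct. Note that the paper itself does not write out a proof of Theorem~\ref{picardtheorem}; it attributes the result to Rohn et al.~\cite{rohn2014iterative} and only sketches the underlying argument in the remarks following the statement. Those remarks indicate a norm-based route: the error recursion $\|x^{\star}-x^{k+1}\|\leq\sigma_{\max}(|A^{-1}|)\,\|x^{\star}-x^{k}\|$ (which alone may be inconclusive, since $\rho(|A^{-1}|)<1$ does not force $\sigma_{\max}(|A^{-1}|)<1$), remedied by passing to a vector norm adapted to $|A^{-1}|$ in which the iteration contracts with some factor $\varrho<1$, i.e.\ a Banach fixed-point argument. You establish the same error recursion but in the componentwise partial order, $|x^{k+1}-x^{\star}|\leq|A^{-1}|\,|x^{k}-x^{\star}|$, and finish with $|A^{-1}|^{k}\to0$; the two finishes are equivalent in content, but yours avoids constructing the adapted norm and yields the sharper entrywise bound $|x^{k}-x^{\star}|\leq|A^{-1}|^{k}\,|x^{0}-x^{\star}|$, while the norm route exhibits an explicit linear rate $\varrho$ in a single fixed norm. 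Two small remarks. First, the fact $\rho(M)<1\Rightarrow M^{k}\to0$ holds for arbitrary matrices, not just nonnegative ones, so it is not a Perron--Frobenius-type statement; nonnegativity of $|A^{-1}|$ is needed only where you correctly use it, to preserve entrywise inequalities under left multiplication. Second, your appeal to condition~\eqref{condRegSuff1} for existence and uniqueness is legitimate (the paper states it as a theorem), but your own estimate makes it dispensable: applying the inequality to two solutions gives uniqueness, and since $|x^{k+1}-x^{k}|\leq|A^{-1}|^{k}\,|x^{1}-x^{0}|$ and $\sum_{k}|A^{-1}|^{k}=(I_n-|A^{-1}|)^{-1}$ converges, the iterates are Cauchy and their limit solves the AVE by continuity, which gives existence as well.
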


Moreover, for each $k \geq 0$,
%$$ |x^{\star}-x^{k+1}|\leq ((I_n-|A^{-1}|)^{-1}-I_n)| x^{k+1}-x^k|,
%$$
%and 
$$
\| x^{\star}-x^{k+1}\| \leq \sigma_{\max}(| A^{-1}|)\|x^{\star}-x^k\|.
$$
Since $\rho(|A^{-1}|)<1$ does not imply $\sigma_{\max}(| A^{-1}|)<1$, this can provide a weak convergence result. However, there is always a suitable vector norm such that
$$
\| x^{\star}-x^{k+1}\| \leq \varrho \|x^{\star}-x^k\|
$$
for certain positive $\varrho<1$. Basically, a similar idea showed that the solution of the AVE can be computed in polynomial time provided $\rho(|A^{-1}|)<1$; see~\cite{zamani2021new}.

%%%%
\paragraph{Ill-conditioned Systems.}
Khojasteh Salkuyeh~\cite{salkuyeh2014picard} observed that the main problem with the Picard iteration method is that if the matrix A is ill-conditioned then in each iteration of the Picard method, an ill-conditioned linear system should be solved. For this reason,   he proposed Picard Hermitian and skew-Hermitian splitting (Picard-HSS) method as follows:
\begin{align*}%\label{HSS-Picard}
B(\alpha)x^{(k,\ell+1)}
 &=C(\alpha)x^{(k,\ell)}+| x^{(k)}| +b, 
 \quad \ell=0,1,\dots, \ell_{k}-1,\ k=0,1,\dots,
\end{align*}
where 
\begin{align*}
B(\alpha)=\frac{1}{2\alpha}(\alpha I_n+H)(\alpha I_n+S),\quad
C(\alpha)=\frac{1}{2\alpha}(\alpha I_n-H)(\alpha I_n-S)
\end{align*}
and
\begin{align*}
H=\frac{1}{2}(A+A^{H}),\quad  S=\frac{1}{2}(A-A^{H}).
\end{align*}
Herein, $\alpha$ is a positive constant, $\{\ell_k \}_{k=0}^{\infty}$ a prescribed sequence of positive integers, and $x^{(k,0)} = x^{(k)}$ is the starting point of the inner HSS iteration at $k$th outer Picard iteration. This leads to the inexact Picard iteration method, 
%called Picard–HSS iteration method, for solving the system (\ref{Eq2}) (for more details see~\cite{bai2009hss}). This method is 
summarized in Algorithm~\ref{PicardHSSIM}.

\begin{algorithm}
\caption{The Picard–HSS iteration method}\label{PicardHSSIM}
 Let 
 %the matrix $A\in \mathbb{C}^{n\times n}$ be positive definite with
$H=\frac{1}{2}(A+A^{H})$ and $ S=\frac{1}{2}(A-A^{H})$ 
%being the Hermitian and skew-Hermitian parts of $A,$ respectively. 
%Given an initial guess $x^{(0)}\in \mathbb{C}^n$ 
and a sequence $\{\ell_k \}_{k=0}^{\infty}$ of positive integers.
%compute $x^{(k+1)}$ for $k = 0, 1, 2,\dots,$ using the following iteration scheme until $\{x^{(k)}\}$ satisfies the following stopping criterion:
\begin{comment}
\begin{enumerate}
\item Pick $x^{(0)}\in \R^{n}$ arbitrarily.
\item for $ k=0, 1, 2, \dots$ do
\item
\quad 
Set $x^{(k,0)}\coloneqq x^{(k)}$;
\item
\quad for $\ell = 0, 1,\dots,\ell_{k}-1,$ solve the following linear systems to obtain $x^{(k,l+1)}$:
\begin{align*}
%\begin{cases}
(\alpha I+H)x^{(k,l+\frac{1}{2})}
 &=(\alpha I-S)x^{(k,l)}+| x^{(k)}| +b,\\
(\alpha I+S)x^{(k,l+1)}
 &=(\alpha I-H)x^{(k,l+\frac{1}{2})}+|x^{(k)}|+b
%\end{cases}
\end{align*}
 %where $\alpha$ is a given positive constant and $I$ is the identity matrix;
 \item
 \quad  Put $x^{(k+1)}\coloneqq x^{(k,\ell_{k})}.$
\item 
 \quad if $x^{(k+1)}$ solves AVE to a given accuracy, stop.
\item end for
\end{enumerate}
\end{comment}
\begin{algorithmic}[1]
\STATE 
pick $x^{(0)}\in \R^n$ arbitrarily
\FOR{$k=0, 1, \dots $}
\STATE
put $x^{(k,0)}\coloneqq x^{(k)}$
\FOR{$\ell = 0, 1,\dots,\ell_{k}-1$}
\STATE
solve the following linear systems to obtain $x^{(k,\ell+1)}$:
\abovedisplayskip=1ex\belowdisplayskip=0ex
\begin{align*}
(\alpha I+H)x^{(k,\ell+\frac{1}{2})}
 &=(\alpha I-S)x^{(k,\ell)}+| x^{(k)}| +b,\\[-3pt]
(\alpha I+S)x^{(k,\ell+1)}
 &=(\alpha I-H)x^{(k,\ell+\frac{1}{2})}+|x^{(k)}|+b
\end{align*}
\ENDFOR
\STATE
 put $x^{(k+1)}\coloneqq x^{(k,\ell_{k})}$
\IF{ $x^{(k+1)}$ solves AVE to a given accuracy}
\RETURN{solution $x^{(k+1)}$}
\ENDIF
\ENDFOR
\end{algorithmic} 
\end{algorithm}

%The next theorem provides sufficient conditions for the convergence of the Picard– HSS method to solve system (\ref{Eq2}).
\begin{theorem}
Let $A \in\mathbb{C}^{n\times n}$ be positive definite 
%matrix and $H = \frac{1}{2}(A+A^{H})$ and $S = \frac{1}{2} (A - A^H )$ be its Hermitian and skew-Hermitian parts, respectively. Let also 
and $\eta =||A^{-1}||_2 < 1$. Then for any initial guess $x^{(0)} \in\mathbb{C}^n$ and any sequence of positive integers $\ell_{k} , k = 0, 1, \dots$, the iteration sequence $\{x^{(k)}\}_{k=0}^{\infty}$ produced by the Picard–HSS iteration method converges to $x^{\ast}$ provided that $\ell \coloneqq {\liminf}_{ k\to\infty} \ell_k \geq N$, where $N$ is a natural number satisfying 
\[
\| T(\alpha)^s\|_{2}<\frac{1-\eta}{1+\eta}\quad \forall s\geq N,
\]
and $T(\alpha)\coloneqq(\alpha I_n+S)^{-1}(\alpha I_n -H)(\alpha I_n+H)^{-1}(\alpha I_n -S).$ 
\end{theorem}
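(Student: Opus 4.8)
The plan is to regard the Picard--HSS method as an \emph{inexact} Picard iteration and to control two error sources separately: the contraction supplied by the exact outer Picard map, and the residual left after truncating the inner HSS loop at $\ell_k$ steps. First I would record two preliminary facts. The limit satisfies $Ax^{\ast}=|x^{\ast}|+b$, so the exact Picard step $\tilde{x}^{(k+1)}\coloneqq A^{-1}(|x^{(k)}|+b)$ obeys
\begin{align*}
\|\tilde{x}^{(k+1)}-x^{\ast}\|_2=\|A^{-1}(|x^{(k)}|-|x^{\ast}|)\|_2\leq \eta\,\|x^{(k)}-x^{\ast}\|_2,
\end{align*}
using $\||x^{(k)}|-|x^{\ast}|\|_2\leq\|x^{(k)}-x^{\ast}\|_2$ (the entrywise reverse triangle inequality together with monotonicity of the Euclidean norm). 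Second, since $A$ is positive definite its Hermitian part $H$ is positive definite, and the standard HSS convergence theory gives $\rho(T(\alpha))<1$ for every $\alpha>0$; hence $\|T(\alpha)^s\|_2\to0$ as $s\to\infty$.

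Next I would identify $T(\alpha)$ as the iteration matrix of the inner loop. Eliminating the half-step from the two HSS updates shows that the inner iterates solving the fixed linear system $Ax=|x^{(k)}|+b$ satisfy $x^{(k,\ell+1)}-\tilde{x}^{(k+1)}=T(\alpha)(x^{(k,\ell)}-\tilde{x}^{(k+1)})$; iterating from $x^{(k,0)}=x^{(k)}$ yields
\begin{align*}
x^{(k+1)}-\tilde{x}^{(k+1)}=T(\alpha)^{\ell_k}\bigl(x^{(k)}-\tilde{x}^{(k+1)}\bigr).
\end{align*}
Combining this with $\|x^{(k)}-\tilde{x}^{(k+1)}\|_2\leq(1+\eta)\|x^{(k)}-x^{\ast}\|_2$ (again from the contraction of the exact step) and the splitting $\|x^{(k+1)}-x^{\ast}\|_2\leq\|x^{(k+1)}-\tilde{x}^{(k+1)}\|_2+\|\tilde{x}^{(k+1)}-x^{\ast}\|_2$, I obtain the one-step estimate
\begin{align*}
\|x^{(k+1)}-x^{\ast}\|_2\leq\bigl[(1+\eta)\,\|T(\alpha)^{\ell_k}\|_2+\eta\bigr]\,\|x^{(k)}-x^{\ast}\|_2.
\end{align*}

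Finally I would extract a uniform contraction factor. By the choice of $N$, whenever $\ell_k\geq N$ we have $\|T(\alpha)^{\ell_k}\|_2<(1-\eta)/(1+\eta)$, so the bracketed factor falls strictly below $(1+\eta)\cdot\frac{1-\eta}{1+\eta}+\eta=1$. Because $\liminf_k\ell_k\geq N$, there is an index $K$ with $\ell_k\geq N$ for all $k\geq K$; and since $\|T(\alpha)^s\|_2\to0$, the supremum of $\|T(\alpha)^s\|_2$ over $s\geq N$ is attained and is still strictly less than $(1-\eta)/(1+\eta)$, which supplies a single factor $q<1$ valid for every $k\geq K$. Hence $\|x^{(k)}-x^{\ast}\|_2\leq q^{\,k-K}\|x^{(K)}-x^{\ast}\|_2\to0$, and the sequence converges to $x^{\ast}$.

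The main obstacle I anticipate is the bookkeeping of the middle step: correctly recognizing $T(\alpha)$ as the inner iteration matrix with fixed point $\tilde{x}^{(k+1)}$, and propagating the inexactness through $T(\alpha)^{\ell_k}$ while keeping the outer Picard contraction cleanly separated. The subtler point is that a merely per-step factor $q_k<1$ does not by itself force convergence; one must promote the $\liminf$ condition into a \emph{uniform} bound $q<1$, and this is precisely where $\rho(T(\alpha))<1$ (equivalently $\|T(\alpha)^s\|_2\to0$) is essential, ensuring the supremum over $s\geq N$ stays bounded away from $(1-\eta)/(1+\eta)$.
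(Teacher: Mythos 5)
Your proposal is correct and takes essentially the same route as the standard proof of this result (the survey states this theorem without proof, citing the Picard--HSS source): view the inner HSS loop as an inexact solve of $Ax=|x^{(k)}|+b$ with iteration matrix $T(\alpha)$ and fixed point $\tilde{x}^{(k+1)}=A^{-1}(|x^{(k)}|+b)$, combine the outer Picard contraction factor $\eta$ with the inner residual factor $\|T(\alpha)^{\ell_k}\|_2$ to get the one-step bound $\|x^{(k+1)}-x^{\ast}\|_2\leq\bigl(\eta+(1+\eta)\|T(\alpha)^{\ell_k}\|_2\bigr)\|x^{(k)}-x^{\ast}\|_2$, and invoke $\ell_k\geq N$. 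Your explicit promotion of the per-step factor to a uniform $q<1$ (using $\|T(\alpha)^s\|_2\to 0$, which follows from $\rho(T(\alpha))<1$ for positive definite $A$) is a point often glossed over in this literature, and you handle it correctly.
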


The values $\ell_k$ in the inner Picard–HSS iteration steps are often problem-dependent and difficult to determine in actual computations. Moreover, the iteration vector cannot be updated in a timely manner. To this end, Zhu and Qi~\cite{zhu2018nonlinear} presented the nonlinear HSS-like iterative method to overcome the defect of the above mentioned method. 
%Algorithm~\ref{NonLinHSS}.
\begin{comment}
 \begin{algorithm}
 \caption{(The nonlinear HSS-like iterative method)}  \label{NonLinHSS}
 Let $A \in\R^{n\times n}$ be a sparse and positive definite matrix, $H =\frac{1}{2}(A+A^H)$ and $S =\frac{1}{2}(A-A^H)$ be its Hermitian and skew-Hermitian parts, respectively. Given an initial guess $x^ {(0)} \in
\R^n ,$ compute $x^{(k+1)}$
for $ k = 0, 1, 2,\dots $ using the following iterative scheme until $x^(k)$ satisfies the stopping criterion:
\begin{equation*}
\begin{cases}
(\alpha I+H)x^{(k+\tfrac{1}{2})}=(\alpha I-S)x^{(k)}+| x^{(k)}| +b\\
(\alpha I+S)x^{(k+1)}=(\alpha I-H)x^{(k+\tfrac{1}{2})}+| x^{(k+\tfrac{1}{2})}| +b
\end{cases}
\end{equation*}
 where $\alpha$ is a given positive constant and $I$ is the identity matrix.
 \end{algorithm}

In the nonlinear HSS-like iterative method, both $x$ and $| x|$  have been updated, whereas in the Picard-HSS iteration only $x$ has been updated.
\end{comment}
% To obtain a one-step form of the nonlinear HSS-like iteration, the following symbols introduced
%\begin{align*}
%U(x)&=(\alpha I+H)^{-1}((\alpha I-S)x+| x| +b),\\
%U(x)&=(\alpha I+S)^{-1}((\alpha I-H)x+| x| +b),
%\end{align*}
%and 
%\begin{equation*}
%\psi(x)\coloneqq V\circ U(x)=V(U(x)).
%\end{equation*}
 Zhang in \cite{zhang2015relaxed} proposed a relaxed nonlinear PHSS-like iterative method, which is more efficient than the Picard-HSS iterative method and is a generalization of the nonlinear HSS-like iteration method. %First, Zhang in Algorithm~\ref{nphssl} presented nonlinear PHSS-like iteration technique, which is based on the nonlinear fixed-point equations $(\alpha P + H)x = (\alpha P - S)x + | x|+ b,$ and $(\alpha P + S)x = (\alpha P - H)x + | x|+ b$.
%\begin{algorithm}
%\caption{(nonlinear PHSS-like iteration method)}\label{nphssl}
 %Let the matrix $A \in \mathbb{C}^{n\times n}$ be positive definite with $H =\frac{1}{2}(A+A^{H})$ and $S =\frac{1}{2}(A-A^{H})$ being the Hermitian and skew-Hermitian parts of the matrix $A,$ respectively. Let $P \in \R^{n\times n}$ be a Hermitian positive definite matrix. Given an initial guess $ x^{(0)} \in \R^n,$ compute $x^{(k+1)}$ for $k = 0, 1, 2,\dots, $ using the iteration scheme until ${x^{(k+1)}} $satisfies the stopping criterion:
 % \begin{equation*}
%\begin{cases}
%(\alpha P+H)x^{(k+\frac{1}{2})}=(\alpha P-S)x^{(k)}+|x^{(k)}| +b\\
%(\alpha P+S)x^{(k+1)}=(\alpha P-H)x^{(k+\frac{1}{2})}+| x^{(k+\frac{1}{2})}|+b
%\end{cases}
%\end{equation*}
%where $\alpha$ is a given positive constant. 
%\end{algorithm}
His method avoids using an explicit inner iteration process, a disadvantage of the Picard–PHSS iterative method, while retaining its benefits. 
%At the same time, the nonlinear HSS-like iterative approach is a specific case of his method. Then, he provided a relaxed nonlinear preconditioned HSS-like iterative technique. 
%Using the theory of nonsmooth analysis, he proved its convergence.
To avoid solving two linear subsystems at each iteration of the Picard-HSS algorithm, Miao et al.~\cite{miao2021picard} proposed a single-step inexact Picard iteration method. 
Another variant of HSS was presented by Li~\cite{li2016onmodified}, who analysed the convergence by means of a smoothing function.

\emph{Generalizations.}
As a generalization of the  Picard method, Wang et al.~\cite{wang2019modified} introduced an iteration method for solving the GAVE by involving a positive semi-definite matrix~$\Omega$. Formulating it for the AVE, we rewrite it as
$$
(A+\Omega)x=\Omega x+|x|+b
$$
and obtain the iterations
\begin{align}\label{MN}
x^{k+1}
%&=x^{(k)}-(A+\Omega)^{-1}(Ax^{(k)}-B| x^{(k)}|-b)\nonumber\\
&=(A+\Omega)^{-1}(\Omega x^{k}+| x^{k}|+b),\quad k=0,1,\dots
\end{align}
They established sufficient conditions for the linear convergence of this method.

\subsection{SOR-Like Methods}
%%%%%%%%%%%%%%%%%%%%%%%%%%%%%

The Successive Over-Relaxation (SOR) method is a popular iterative technique used to accelerate convergence when solving large-scale linear systems of equations. 
%It introduces an additional relaxation parameter to accelerate convergence. 
By updating the solution at each iteration using a weighted combination of the current and previous iterations, SOR aims to achieve faster convergence than certain other methods, such as Jacobi and Gauss-Seidel.

The SOR method has several advantages in the classical theory of linear equations, such as guaranteed convergence under certain conditions, especially for diagonally dominant or symmetric positive definite matrices. It can also be easily parallelized, making it suitable for large-scale problems.

For the linear equation $Ax = b$, the iterations of the SOR method can be expressed in a component-wise form as
\begin{align*}
x_i^{k+1} &= (1 - \omega) x_i^{k} + \frac{\omega}{a_{ii}}\left(b_i - \sum_{j=1}^{i-1} a_{ij}x_j^{k+1} - \sum_{j=i+1}^{n} a_{ij}x_j^{k}\right),
\quad  i = 1,\dots,n,
\end{align*}
where 
%$x_i^{k}$ denotes the $i$th component of the solution vector at iteration $k$, $a_{ij}$ represents the elements of matrix $A$, $b_i$ is the $i$th component of the right-hand side vector $b$, and
$\omega$ is the relaxation parameter.

Several SOR-like methods have been proposed to solve the AVE. These methods extend the SOR concept by incorporating suitable modifications to address the nonlinearity introduced by the absolute value function. They aim to exploit the structure and properties of the AVE to enhance convergence. By iteratively updating the solution and adjusting the relaxation parameter, they provide efficient and robust numerical schemes for solving the AVE. 
%Further research in this area continues to explore novel variants and optimizations of SOR-like methods for AVEs.

\begin{algorithm}
\caption{(The SOR-like iteration method)}\label{sorl}
Let $ \omega $ be a positive constant.
\begin{comment}
\begin{enumerate}
\item 
Pick $x^0,y^0\in \R^n$ arbitrarily.
\item 
For $k=0, 1, \dots $ repeat
\item \qquad
Put 
  $x^{k+1}\coloneqq (1- \omega)x^{k}+ \omega A^{-1}(y^{k} + b)$
\item \qquad
Put 
 $y^{k+1}\coloneqq (1-\omega)y^{k} + \omega | x^{k+1} |$
\item \qquad
 If $x^{k+1}$ solves AVE to a given accuracy, stop.
\item 
end for
  \end{enumerate}
\end{comment}

\begin{algorithmic}[1]
\STATE 
pick $x^0,y^0\in \R^n$ arbitrarily
\FOR{$k=0, 1, \dots $}
\STATE
put 
  $x^{k+1}\coloneqq (1- \omega)x^{k}+ \omega A^{-1}(y^{k} + b)$
\STATE
put 
 $y^{k+1}\coloneqq (1-\omega)y^{k} + \omega | x^{k+1} |$
\IF{$x^{k+1}$ solves AVE to a given accuracy}
\RETURN{solution $x^{k+1}$}
\ENDIF
\ENDFOR
\end{algorithmic}   
\end{algorithm}

 By reformulating the AVE as a two-by-two block nonlinear equation, Ke and Ma in~\cite{ke2017sor} proposed a SOR-like iteration method. In particular, rewrite the AVE as
 \begin{align*}%\label{Equivalen1}
 Ax-y&=b,\\
-| x| +y&=0,
  \end{align*}
 that is,
 \begin{equation}\label{NEq}
 %\mathcal{A}z\coloneqq 
 \begin{pmatrix}
 A&-I_n\\
 -D(x)&I_n
 \end{pmatrix}\;\begin{pmatrix}
 x\\
 y
 \end{pmatrix}=\begin{pmatrix}
 b\\
 0
 \end{pmatrix}.
 %\eqqcolon \mathbf{b}.
 \end{equation}
 Based on this formulation, they suggested the SOR-like method for solving the AVE, as described in Algorithm~\ref{sorl}.

 Let $(x^{\ast}, y^{\ast})$ be the solution pair of the above  equation and $(x^{k}, y^{k})$ be generated by Algorithm~\ref{sorl}. Define the iteration errors as 
 \[
 e_{k}^{x} \coloneqq x^{\ast} - x^{k},\quad
 e_{k}^{y} \coloneqq y^{\ast} - y^{k}. 
 \]
 %The iteration method converges to the solution of AVE under suitable choices of the involved parameter.
 To state the convergence properties, define the vector norm
\[
\|(e_{k}^{x},e_{k}^{y})\|_e\coloneqq \sqrt{\| e^{x}\|^{2}+{\omega}^{-2}\| e^{y}\|^{2}}.
\]

 %\begin{theorem}
 %Let $A\in{\R}^{n\times n}$ be a nonsingular matrix and $b\in{\R}^{n}$. Denote
%\[ \upsilon= \| A^{-1}\|,\; \alpha = | 1 -\omega|\; \text{and}\; \beta ={\omega}^{2}\upsilon.\]
%If\[0<\omega< 2\; \text{and}\; {\alpha}^{4}-3{\alpha}^{2}-2\beta \alpha -2{\beta}^{2}+1>0,\]
%then the following inequality
%\[ |||(e_{k+1}^{x},e_{k+1}^{y})|||<|||(e_{k}^{x},e_{k}^{y})|||,\]
%holds for $k = 0, 1, 2, \dots.$ Here, the norm has been defined by
%\[|||(e_{k}^{x},e_{k}^{y})|||\coloneqq \sqrt{\| e^{x}\|^{2}+{\omega}^{-2}\| e^{y}\|^{2}}.\]
% \end{theorem}

 \begin{theorem}
 Denote $\nu\coloneqq \| A^{-1}\|$ and $\tau\coloneqq\frac{2}{3+\sqrt{5}}$. 
 If
 \[\nu<1\ \text{ and }\ 
 1-\tau<\omega<\min\braces*{1+\tau,\sqrt{\tau/\nu}},
 \]
 then
 \[ \|(e_{k+1}^{x},e_{k+1}^{y})\|_e<\|(e_{k}^{x},e_{k}^{y})\|_e
 \]
 holds for every $k = 0, 1, \dots$
 \end{theorem}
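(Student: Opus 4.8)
The plan is to track the two error sequences $e_k^x = x^* - x^k$ and $e_k^y = y^* - y^k$ through one SOR-like step and reduce the claimed monotone decrease of $\|\cdot\|_e$ to a spectral-norm bound on an explicit $2\times 2$ nonnegative matrix. First I would observe that the solution pair satisfies the same recursion as a fixed point: since $Ax^* - y^* = b$ and $y^* = |x^*|$, we have $x^* = (1-\omega)x^* + \omega A^{-1}(y^* + b)$ and $y^* = (1-\omega)y^* + \omega|x^*|$. Subtracting the update rules of Algorithm~\ref{sorl} gives the exact error recursions
$$
e_{k+1}^x = (1-\omega)e_k^x + \omega A^{-1}e_k^y, \qquad e_{k+1}^y = (1-\omega)e_k^y + \omega\parentheses*{|x^*| - |x^{k+1}|}.
$$

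Next I would pass to norms. Writing $\nu = \|A^{-1}\|$ and using the entrywise inequality $\||x^*| - |x^{k+1}|\| \le \|x^* - x^{k+1}\| = \|e_{k+1}^x\|$, the first relation yields $\|e_{k+1}^x\| \le |1-\omega|\,\|e_k^x\| + \omega\nu\,\|e_k^y\|$, and substituting this into the bound for $\|e_{k+1}^y\|$ gives $\|e_{k+1}^y\| \le \omega|1-\omega|\,\|e_k^x\| + (|1-\omega| + \omega^2\nu)\,\|e_k^y\|$. Collecting $w_k = (\|e_k^x\|, \|e_k^y\|)^\top$, this is the componentwise inequality $0 \le w_{k+1} \le M w_k$ with $M = \begin{pmatrix} |1-\omega| & \omega\nu \\ \omega|1-\omega| & |1-\omega| + \omega^2\nu\end{pmatrix}$. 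Since $\|(e^x,e^y)\|_e^2 = w^\top D w$ with $D = \diag(1, \omega^{-2})$ and the quadratic form $w \mapsto w^\top D w = \sum_i D_{ii}w_i^2$ is monotone on the nonnegative cone, the desired strict decrease follows once $M^\top D M \prec D$, i.e. once $\sigma_{\max}(\hat M) < 1$ for the congruent matrix $\hat M = D^{1/2} M D^{-1/2} = \begin{pmatrix} |1-\omega| & \omega^2\nu \\ |1-\omega| & |1-\omega| + \omega^2\nu\end{pmatrix}$.

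The heart of the argument is then the spectral bound, and this is where $\tau$ enters. The stated hypotheses are exactly $|1-\omega| < \tau$ (equivalent to $1-\tau < \omega < 1+\tau$) and $\omega^2\nu < \tau$ (equivalent to $\omega < \sqrt{\tau/\nu}$); setting $\tau' = \max\{|1-\omega|, \omega^2\nu\} < \tau$, all four entries of $\hat M$ are bounded by the corresponding entries of $\tau'\left(\begin{smallmatrix}1&1\\1&2\end{smallmatrix}\right)$ (note the $(2,2)$ entry $|1-\omega|+\omega^2\nu \le 2\tau'$). I would then invoke monotonicity of the spectral norm under entrywise domination of nonnegative matrices, together with the computation $\sigma_{\max}\!\left(\begin{smallmatrix}1&1\\1&2\end{smallmatrix}\right) = \frac{3+\sqrt5}{2} = \tau^{-1}$ (this symmetric matrix has trace $3$, determinant $1$, hence eigenvalues $\frac{3\pm\sqrt5}{2}$), to conclude $\sigma_{\max}(\hat M) \le \tau'/\tau < 1$. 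Whenever $x^k \neq x^*$ we have $w_k \neq 0$, so the decrease is strict, which completes the argument.

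The main obstacle I anticipate is handling the nonsmoothness cleanly: the $y$-recursion is genuinely nonlinear, and the entire reduction hinges on replacing $|x^*| - |x^{k+1}|$ by $e_{k+1}^x$ via the reverse-triangle inequality, so that only the fixed linear data $A^{-1}$ and the parameter $\omega$ survive. The second delicate point is recognizing that the critical constant $\tau = \frac{2}{3+\sqrt5}$ is precisely the reciprocal of the largest eigenvalue of $\left(\begin{smallmatrix}1&1\\1&2\end{smallmatrix}\right)$, which is exactly what makes the two scalar conditions on $\omega$ match the spectral-norm threshold; once that identification is in place, the nonnegative-matrix monotonicity finishes the proof without further computation.
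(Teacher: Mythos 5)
Your proof is correct and follows essentially the same route as the proof in the cited source \cite{ke2017sor} (the survey states this theorem without proof): subtract the fixed-point identities to get the error recursions, bound the norms to obtain componentwise domination of $(\|e_{k+1}^x\|,\|e_{k+1}^y\|)^\top$ by a $2\times 2$ nonnegative matrix, rescale by $D^{1/2}=\diag(1,\omega^{-1})$ to match the norm $\|\cdot\|_e$, dominate the rescaled matrix entrywise by $\tau'\left(\begin{smallmatrix}1&1\\1&2\end{smallmatrix}\right)$ with $\tau'=\max\{|1-\omega|,\omega^2\nu\}<\tau$, and use that $\tau^{-1}=\frac{3+\sqrt{5}}{2}$ is exactly the largest eigenvalue of $\left(\begin{smallmatrix}1&1\\1&2\end{smallmatrix}\right)$. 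The only caveat, inherited from the original statement rather than introduced by you, is that strict decrease trivially fails when $(x^k,y^k)=(x^\ast,y^\ast)$, so the theorem implicitly assumes the current iterate is not already the exact solution, which your final remark on $w_k\neq 0$ handles appropriately.
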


Guo et al.~\cite{guo2019sor} have further theoretically considered the SOR-like method. They found some new convergence conditions and a choice of the optimal relaxation parameter $\omega$ that we mention below. 
 %To study the convergence region for parameter $\omega$ in the SOR-like method to solve augmented systems (\ref{NEq}), first have stated the following lemmas.
%\begin{lemma}
%Let $A\in{\R}^{n\times n}$ satisfy Lemma 1 in \cite{mangasarian2006absolute}. If $\mu$ is an eigenvalue of the matrix $D(x^{(k+1)})A^{-1},$ then $| mu|<1.$
%\end{lemma}
%\begin{lemma}
%Let $A\in{\R}^{n\times n}$ satisfy Lemma 1 in \cite{mangasarian2006absolute}. If $\lambda$ is an eigenvalue of the matrix $M_{\omega},$ then $\lambda\neq 1$.
%\end{lemma}
%\begin{lemma}
%Let $A\in{\R}^{n\times n}$ be nonsingular and $\omega > 0.$ Suppose that $\mu$ is an eigenvalue of $D(x^{(k+1)})A^{-1}$. If $\lambda$ satisfies
%\begin{equation}\label{eqguo}
%(\lambda + \omega -1)^{2} = \lambda {\omega}^{2}\mu,
%\end{equation}
%then $\lambda$ is an eigenvalue of $M_{\omega}$. Conversely, if $\lambda$ is an eigenvalue of $M_{\omega}$ such that $\lambda\neq 1-\omega$, and $\mu$ satisfies (\ref{eqguo}), then $\mu$ is a nonzero eigenvalue of $D(x^ {(k+1)})A^{-1}.$
%\end{lemma}
%Thus, the convergence of the SOR-like method in the following results has been reported.

%\begin{theorem}
%Let $A\in{\R}^{n\times n}$ satisfies  in condition \eqref{condRegSuff2}. Suppose that all the eigenvalues $\mu$ of $D(x^{k+1})A^{-1}$ are real. If $\mu>0$, then the SOR-like method is convergent for $0<\omega<2.$
%\end{theorem}

\begin{theorem}
Let $\| A^{-1}\|<1$. Suppose that all eigenvalues of $DA^{-1}$ are real for any diagonal matrix $D$ with diagonal elements of $\pm 1$ or~$0$. Then, the SOR-like method is convergent for $0<\omega\leq1.$
\end{theorem}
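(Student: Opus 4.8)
The plan is to linearise the error recursion of Algorithm~\ref{sorl} about the solution pair $(x^\star,y^\star)$, reduce the spectral analysis of the resulting $2n\times2n$ block iteration matrix to a scalar quadratic, and then apply the degree-two Schur--Cohn (Jury) root-location criterion. Writing $e_k^x=x^\star-x^k$, $e_k^y=y^\star-y^k$ and subtracting the fixed-point identities $x^\star=(1-\omega)x^\star+\omega A^{-1}(y^\star+b)$ and $y^\star=(1-\omega)y^\star+\omega|x^\star|$ from the two update rules, I would obtain $e_{k+1}^x=(1-\omega)e_k^x+\omega A^{-1}e_k^y$ and $e_{k+1}^y=(1-\omega)e_k^y+\omega(|x^\star|-|x^{k+1}|)$. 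The reverse triangle inequality, applied componentwise, lets me write $|x^\star|-|x^{k+1}|=D_{k+1}\,e_{k+1}^x$ for a diagonal $D_{k+1}$ with entries in $[-1,1]$; eliminating $e_{k+1}^x$ then casts the recursion as $(e_{k+1}^x,e_{k+1}^y)^\top=M(D_{k+1})(e_k^x,e_k^y)^\top$ with
\[
M(D)=\begin{pmatrix}(1-\omega)I_n & \omega A^{-1}\\ \omega(1-\omega)D & (1-\omega)I_n+\omega^2 DA^{-1}\end{pmatrix}.
\]

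Next I would compute the spectrum of $M(D)$. Setting $M(D)(u,v)^\top=\lambda(u,v)^\top$, the first block row gives $\omega A^{-1}v=(\lambda-1+\omega)u$, and substituting this into the second block row collapses it to $\omega\lambda Du=(\lambda-1+\omega)v$. Eliminating $v$ yields the clean identity $\omega^2\lambda\,(A^{-1}D)u=(\lambda-1+\omega)^2u$, so every eigenvalue $\lambda$ of $M(D)$ is tied to an eigenvalue $\mu$ of $A^{-1}D$ (which is similar to $DA^{-1}$, hence shares its spectrum) through
\[
\lambda^2-\bigl[2(1-\omega)+\omega^2\mu\bigr]\lambda+(1-\omega)^2=0.
\]
Here the hypotheses enter: since $\|D\|\le1$ and $\|A^{-1}\|<1$ we have $|\mu|\le\|DA^{-1}\|<1$, while the assumed reality of all eigenvalues of $DA^{-1}$ forces $\mu\in(-1,1)$.

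It then remains to verify that both roots of this real quadratic lie in the open unit disc whenever $\mu\in(-1,1)$ and $0<\omega\le1$. For $\lambda^2-b\lambda+c$ with $b=2(1-\omega)+\omega^2\mu$ and $c=(1-\omega)^2$, the Jury conditions read $|c|<1$ and $|b|<1+c$. The first holds because $(1-\omega)^2<1$ on $(0,1]$. For the second I would exploit the algebraic identities $1+(1-\omega)^2-\bigl[2(1-\omega)+\omega^2\bigr]=0$ (controlling the limit $\mu\to1$) and $1+(1-\omega)^2+\bigl[2(1-\omega)-\omega^2\bigr]=4(1-\omega)\ge0$ (controlling $\mu\to-1$); since $b$ is linear and strictly increasing in $\mu$, these keep $b$ strictly inside $\bigl(-(1+c),\,1+c\bigr)$ for $\mu\in(-1,1)$. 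This delivers $\rho(M(D))<1$ for every admissible diagonal $D$.

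The main obstacle is that $D_{k+1}$ changes from step to step and, through the mean-value argument, carries entries in $[-1,1]$ rather than the $\{-1,0,1\}$ entries named in the hypothesis; a pointwise bound $\rho(M(D))<1$ does not by itself control the nonstationary product $\prod_k M(D_{k+1})$. I would close this gap by passing to the asymptotic regime in which $x^k\to x^\star$ and $\sgn(x^k)$ stabilises to $\sgn(x^\star)$, so that $D_{k+1}$ eventually coincides with the fixed sign matrix $D(x^\star)$ and the frozen matrix $M(D(x^\star))$ governs the local rate; alternatively, one exhibits a single vector norm in which $\|M(D)\|<1$ holds uniformly over the relevant diagonal matrices, recovering a monotone error decrease in the spirit of the Ke--Ma estimate preceding this theorem.
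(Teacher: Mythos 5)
The survey states this theorem without proof (it is quoted from Guo et al.~\cite{guo2019sor}), so there is no in-paper argument to compare against; judged on its own merits, your proposal has the right skeleton but two genuine gaps, which you name and do not close. The algebra you do carry out is correct: the error recursion, the matrix $M(D)$, the collapse of its eigenproblem to $\lambda^2-[2(1-\omega)+\omega^2\mu]\lambda+(1-\omega)^2=0$ with $\mu$ an eigenvalue of $DA^{-1}$, and the Jury conditions $|c|<1$, $|b|<1+c$ are all verified correctly. The first gap is the mismatch you flag: the hypothesis grants reality of the spectrum of $DA^{-1}$ only for diagonal $D$ with entries in $\{-1,0,1\}$, while your mean-value matrices $D_{k+1}$ have entries in $[-1,1]$, and reality does not propagate from the discrete set to the continuum. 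Concretely, for $A^{-1}=\frac13\begin{pmatrix}2&1\\-1&0\end{pmatrix}$ one has $\|A^{-1}\|<1$ and $\det$-trace computations show $DA^{-1}$ has real spectrum for every $\{-1,0,1\}$-diagonal $D$, yet $D=\diag(1/2,1)$ yields a negative discriminant and complex eigenvalues. So the sentence ``the assumed reality of all eigenvalues of $DA^{-1}$ forces $\mu\in(-1,1)$'' is unjustified for the matrices that actually occur in your recursion. This particular gap happens to be repairable by a stronger root-location argument that needs no reality at all: if $\lambda=e^{i\theta}$ were a root, then $\omega^2|\mu|=|e^{i\theta}+(1-\omega)^2e^{-i\theta}-2(1-\omega)|$, and minimizing the right-hand side over $\theta$ gives exactly $\omega^2$, so a root can reach the unit circle only when $|\mu|\ge1$; combined with continuity from $\mu=0$, the bound $|\mu|\le\|DA^{-1}\|<1$ alone keeps both roots strictly inside the disc for $0<\omega\le1$. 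But that is a different argument from the one you wrote.

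The second gap is the decisive one. Even with $\rho(M(D_{k+1}))<1$ for every admissible $D_{k+1}$, the iteration is nonstationary, $e^{k+1}=M(D_{k+1})e^k$ with a different matrix at each step, and a product of matrices each of spectral radius less than one can diverge (the joint spectral radius of the family may exceed one). Neither of your proposed repairs works as stated: waiting for $\sgn(x^k)$ to stabilize at $\sgn(x^\star)$ is circular, since sign stabilization presupposes the convergence $x^k\to x^\star$ that is to be proved (and $D_{k+1}$ coincides with a $\pm1$ sign matrix only in coordinates where $x^{k+1}_i$ and $x^\star_i$ already agree in sign); and a single norm with $\|M(D)\|<1$ uniformly over the family is precisely what a complete proof must construct --- it is how Ke and Ma obtain the estimate preceding this theorem, at the price of different restrictions on $\omega$ --- but you neither construct it nor can its existence be inferred from the pointwise spectral radius bounds. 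As written, your proposal establishes only that each frozen iteration matrix is Schur stable, i.e., a local statement about the linearized iteration, not the convergence asserted by the theorem.
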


%\begin{theorem}
%Let $A\in{\R}^{n\times n}$ satisfies  in condition \eqref{condRegSuff2} and $\rho =\rho(D(x^ {k+1})A^{-1}).$ Let all the eigenvalues $\mu$ of $D(x^{k+1})A^{-1}$ be positive. Then
%\begin{equation*}
%\rho(M_{\omega})=\begin{cases} | 1-\omega|, \; \dfrac{2}{1+\sqrt{1-\rho}}<\omega<2\\ 
%\dfrac{1}{2} [| {\omega}^{2}\rho -2\omega + 2|+ \sqrt{({\omega}^{2}\rho -2\omega + 2)^{2}-4(\omega -1)^{2}}], 0<\omega<\dfrac{2}{1+\sqrt{1-\rho}}.
%\end{cases}
%\end{equation*}
%Moreover, the optimal parameters ${\omega}_{\alpha}$ and $\rho(M_{{\omega}_{\alpha}})$ have been given respectively by \[{\omega}_{\alpha}=\dfrac{2}{1+\sqrt{1-\rho}},\; \text{and}\; \rho(M_{{\omega}_{\alpha}} )=\dfrac{1-\sqrt{1-\rho}}{1+\sqrt{1-\rho}}.\]
%\end{theorem}

\paragraph{Other Approaches.}
Dong et al.~\cite{dong2020new} proposed a new SOR-like method based on the transformation to an equivalent system double-sized system. 
% \begin{equation}
% \begin{pmatrix} A&-I_n\\  A-D(x)&0  \end{pmatrix}\;
% \begin{pmatrix}  x\\  y \end{pmatrix}
% =\begin{pmatrix}  b\\  b  \end{pmatrix}. 
% \end{equation}
Another block reformulation was considered by Li and Wu in~\cite{li2020modified}. Ke~\cite{ke2020new} also reformulated the AVE as a double-sized system and proposed a combination of the Picard and SOR methods. Yu et al.~\cite{yu2020modified} modified this method to improve convergence properties. 
Zheng~\cite{zheng2020picard} considered a method combining the ideas of the Picard-HSS iterations and the SOR method.

\subsection{Matrix Splitting Methods}

%{\mh[The following could be a part of another subsection -- Matrix splitting methods] \hm [DONE. I opened a new subsection for it.]}

Matrix splitting methods are iterative techniques used to solve systems of linear equations. They involve decomposing a given matrix into a sum of matrices, typically an easily invertible one and a matrix with desirable properties. By iteratively applying this splitting and updating an initial guess, these methods aim to converge to the solution of the linear system. In this subsection, we will review some of them that are applied to solving the AVE and GAVE.

The general scheme proposed by Zhou et al.~\cite{zhou2021newton} expresses  matrix $A$ as $A=M-N$ and the AVE as
\begin{align*}
  (M+\Omega)x=(N+\Omega)x +|x|+b,  
\end{align*}
where $\Omega\in\R^{n\times n} $ is an arbitrary matrix. 
The iterations then take the form as presented in Algorithm~\ref{NMS}.

%Using the separable property of the linear and nonlinear terms of the GAVE, together with the Newton method and the matrix splitting technique, Zhou et al.~\cite{zhou2021newton} designed a Newton-based matrix splitting iterative method to solve the GAVE.
% They expressed the matrix $A$ as $A=M-N$ and defined $F(x)=Ax-B| x|-b=0$, so that $F(x)=H(x)+G(x)$, where
%\[H(x)=\Omega x+Mx,\qquad G(x)=-\Omega x-Nx-B| x|-b.\] Then they help of MN method and  iterations (\ref{MN}) and proposed the Algorithm~\ref{NMS}.

\begin{algorithm}
\caption{Newton-based matrix splitting iterative method}\label{NMS}
Let $\Omega\in\R^{n\times n} $ be arbitrary, and $A=M-N$ a splitting such that $\Omega + M$ is nonsingular.
\begin{comment}
\begin{enumerate}
\item 
Pick $x^0\in \R^n$ arbitrarily.
\item 
For $k=0, 1, \dots $ repeat
\item \qquad
Put 
$x^{k+1}\coloneqq (M +\Omega)^{-1} ((N + \Omega)x^k + |x^k| + b)$
\item \qquad
 If $x^{k+1}$ solves AVE to a given accuracy, stop.
\item 
end for
  \end{enumerate}
\end{comment}
\begin{algorithmic}[1]
\STATE 
pick $x^0\in \R^n$ arbitrarily
\FOR{$k=0, 1, \dots $}
\STATE
put 
$x^{k+1}\coloneqq (M +\Omega)^{-1} ((N + \Omega)x^k + |x^k| + b)$
\IF{$x^{k+1}$ solves AVE to a given accuracy}
\RETURN{solution $x^{k+1}$}
\ENDIF
\ENDFOR
\end{algorithmic} 
\end{algorithm}

Notice that the case with  $M = A$ and $\Omega = N = 0$ reduces to the standard Picard method (Algorithm~\ref{rohn}). The case with  $M = A$ and $N = 0$ is the generalization of the Picard method discussed in~\eqref{MN}.

The convergence of Algorithm~\ref{NMS} is stated in the following theorem. 
Particularly for the case of standard Picard metod ($M = A$ and $\Omega = N = 0$), we obtain the condition $\rho(|A^{-1}|)<1$, which is the same as that in Theorem~\ref{picardtheorem}.

\begin{theorem}\label{thmnms}
Let $A\in {\R}^{n\times n}$ and $A = M - N$ be its splitting. Suppose that the matrix $M +\Omega$ is nonsingular, where $\Omega\in {\R}^{n\times n}$ is given matrix. 
%Define
%$$f_{\Omega} = |(M + \Omega)^{-1} (N + \Omega)|, g_{\Omega} = |(M + \Omega) ^{-1} B|.$$
If
%$$\rho(f_{\Omega}+g_{\Omega})<1,$$
$$\rho\parentheses*{|(M + \Omega)^{-1} (N + \Omega)|+|(M + \Omega) ^{-1} B|}<1,$$
then Algorithm~\ref{NMS} is convergent.
\end{theorem}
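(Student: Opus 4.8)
The plan is to rewrite the iteration as a fixed-point map and to control the error by a single nonnegative matrix. Set
\[
T \coloneqq (M+\Omega)^{-1}(N+\Omega),\quad G \coloneqq (M+\Omega)^{-1}B,\quad c \coloneqq (M+\Omega)^{-1}b,
\]
so that the update of Algorithm~\ref{NMS} reads $x^{k+1} = T x^k + G|x^k| + c$. Any solution $x^*$ of the underlying GAVE $Ax-B|x|=b$ satisfies $(M+\Omega)x^* = (N+\Omega)x^* + B|x^*| + b$, i.e.\ it is a fixed point $x^* = T x^* + G|x^*| + c$. Subtracting the two relations gives the error recursion
\[
x^{k+1} - x^* = T(x^k - x^*) + G\big(|x^k| - |x^*|\big).
\]

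Next I would pass to entrywise absolute values. Using the submultiplicativity $|Pz| \le |P|\,|z|$ of the componentwise modulus together with the reverse triangle inequality $\big|\,|x^k| - |x^*|\,\big| \le |x^k - x^*|$ (applied coordinate by coordinate), the recursion yields
\[
|x^{k+1} - x^*| \le |T|\,|x^k - x^*| + |G|\,|x^k - x^*| = M_0\,|x^k - x^*|,
\]
where $M_0 \coloneqq |T| + |G| = |(M+\Omega)^{-1}(N+\Omega)| + |(M+\Omega)^{-1}B| \ge 0$ is exactly the matrix appearing in the hypothesis. Iterating this monotone bound gives $|x^k - x^*| \le M_0^{\,k}\,|x^0 - x^*|$ for all $k$.

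It remains to exploit $\rho(M_0) < 1$. Since $M_0$ is nonnegative with spectral radius below one, its powers satisfy $M_0^{\,k} \to 0$, whence $|x^k - x^*| \to 0$ and $x^k \to x^*$. To justify that such an $x^*$ exists and is unique, so that convergence is meaningful, I would produce a weighted maximum norm in which the map is a genuine contraction: choosing a positive vector $v$ with $M_0 v \le \theta v$ for some $\theta \in [\rho(M_0),1)$ --- obtainable from the Perron vector of a slight positive perturbation of $M_0$ when $M_0$ is reducible --- the norm $\|x\|_v = \max_i |x_i|/v_i$ is absolute and monotone, and the bound $|F(x)-F(y)| \le M_0|x-y|$ turns into $\|F(x)-F(y)\|_v \le \theta\|x-y\|_v$. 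The Banach fixed-point theorem then delivers the unique fixed point $x^*$ and the convergence of every trajectory to it. The only genuinely nonlinear feature is the absolute value, but the reverse triangle inequality linearizes it at the level of the error bound, so the principal technical point is merely the passage from $\rho(M_0)<1$ to a contraction norm.
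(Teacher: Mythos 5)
Your proof is correct, and its core---the componentwise error recursion $|x^{k+1}-x^*|\leq \big(|(M+\Omega)^{-1}(N+\Omega)|+|(M+\Omega)^{-1}B|\big)\,|x^k-x^*|$ followed by the observation that powers of a nonnegative matrix with spectral radius below one vanish---is exactly the standard argument for this theorem; the survey itself states the result without proof, citing Zhou et al., and that reference proceeds in the same way. Your additional step of extracting a weighted maximum-norm contraction from a Perron vector of (a perturbation of) $M_0$ is a welcome refinement, since it makes the argument self-contained by delivering existence and uniqueness of the solution rather than assuming a solution $x^*$ exists, which the cited source handles separately.
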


%\begin{corollary}%{\mh[remove] \hm [OK.]}
%Let the conditions of Theorem~\ref{thmnms} be satisfied. If
%$$\rho(|(M +\Omega)^{-1}|(| N +\Omega | +| B|))<1, $$
%then Algorithm~\ref{NMS} is convergent.
%\end{corollary}

Another convergence result focuses on positive definite matrices. Note that  the case with $M = A$ and $\Omega = N = 0$ yields the condition $\|A^{-1}\|<1$.

\begin{theorem}
Let $A$ be positive definite and $A = M-N$ its splitting, where $M$ is positive definite. If matrix $\Omega$ has positive diagonal and
$$\| M^{-1}\|<\dfrac{1}{\| \Omega\|_{2}+\| \Omega+N\|_{2}+\| B\|_{2}},$$
then Algorithm~\ref{NMS} is convergent.
\end{theorem}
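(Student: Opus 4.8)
The plan is to read Algorithm~\ref{NMS} as a fixed-point iteration $x^{k+1}=T(x^k)$ for the map
$$
T(x)\coloneqq (M+\Omega)^{-1}\big((N+\Omega)x + B|x| + b\big),
$$
and then to show that $T$ is a contraction in the Euclidean norm. First I would record that, since $M-N=A$, a point $x^\star$ is a fixed point of $T$ exactly when $(M+\Omega)x^\star=(N+\Omega)x^\star+B|x^\star|+b$, i.e.\ precisely when $Ax^\star-B|x^\star|=b$; hence the fixed points of $T$ coincide with the solutions of the GAVE. Consequently, once $T$ is shown to be a contraction, the Banach fixed-point theorem simultaneously supplies a unique fixed point $x^\star$ and the global convergence $x^k\to x^\star$ from any starting point, which is exactly the claimed convergence.

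The key estimate comes from subtracting the fixed-point identity from the iteration,
$$
x^{k+1}-x^\star=(M+\Omega)^{-1}\big((N+\Omega)(x^k-x^\star)+B(|x^k|-|x^\star|)\big).
$$
Taking norms and using that the absolute value is entrywise nonexpansive, so that $\||x^k|-|x^\star|\|\le\|x^k-x^\star\|$, I would obtain
$$
\|x^{k+1}-x^\star\|\le \|(M+\Omega)^{-1}\|_2\,\big(\|N+\Omega\|_2+\|B\|_2\big)\,\|x^k-x^\star\|.
$$
It then remains to bound $\|(M+\Omega)^{-1}\|_2$ by $\|M^{-1}\|_2$. Positive definiteness of $M$ guarantees $M$ is nonsingular, and the hypothesis in particular forces $\|M^{-1}\|_2\,\|\Omega\|_2<1$; writing $M+\Omega=M(I_n+M^{-1}\Omega)$ and estimating the Neumann series yields
$$
\|(M+\Omega)^{-1}\|_2\le \frac{\|M^{-1}\|_2}{1-\|M^{-1}\|_2\,\|\Omega\|_2}.
$$

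Combining the two displays, the contraction factor of $T$ is bounded by
$$
L\coloneqq \frac{\|M^{-1}\|_2\,\big(\|N+\Omega\|_2+\|B\|_2\big)}{1-\|M^{-1}\|_2\,\|\Omega\|_2},
$$
and clearing the positive denominator shows $L<1$ is equivalent to $\|M^{-1}\|_2\big(\|\Omega\|_2+\|\Omega+N\|_2+\|B\|_2\big)<1$, i.e.\ to the stated hypothesis, which closes the argument. The one genuine obstacle is the passage from $\|(M+\Omega)^{-1}\|_2$ to $\|M^{-1}\|_2$: it is the perturbation bound above that makes the three-term denominator appear and that accounts for the extra summand $\|\Omega\|_2$ relative to the naive contraction requirement $\|(M+\Omega)^{-1}\|_2(\|N+\Omega\|_2+\|B\|_2)<1$. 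I would also note, as a sanity check, that positive definiteness of $A$ itself is never invoked in this contraction argument; only nonsingularity of $M$ (furnished by its positive definiteness) and the norm inequalities are used, so the convergence conclusion in fact holds under somewhat weaker assumptions than those stated.
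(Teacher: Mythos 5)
Your proof is correct, and it is essentially the standard argument for this result: the survey states the theorem without proof (citing Zhou et al.), and the shape of the hypothesis --- $\|\Omega\|_2$ appearing both on its own and inside $\|\Omega+N\|_2$ --- is exactly the fingerprint of your two ingredients, the Lipschitz estimate $\|T(x)-T(y)\|\leq\|(M+\Omega)^{-1}\|_2\,(\|N+\Omega\|_2+\|B\|_2)\,\|x-y\|$ together with the Neumann-series perturbation bound $\|(M+\Omega)^{-1}\|_2\leq \|M^{-1}\|_2/(1-\|M^{-1}\|_2\|\Omega\|_2)$, which is the route taken in the source literature. Your side remarks are also accurate: the argument only uses nonsingularity of $M$ (so positive definiteness of $A$ and the positive diagonal of $\Omega$ are not actually needed), and reading the iteration in its GAVE form with $B|x^k|$ is the right reconciliation of Algorithm~\ref{NMS} with the $\|B\|_2$ appearing in the statement.
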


For linear systems, matrix splittings often split the matrix into lower and upper diagonal parts. Let $D, E$ and $F$ be the diagonal, strictly lower triangular and strictly upper triangular matrices constructed from $A$ such that $A=D-E-F$, then the AVE takes the form
 \begin{equation*}
 (D-E)x-| x| =Fx+b.
 \end{equation*}
This is the base for the adaptation of the Gauss-Seidel method to the case of the AVE, as was done by Edalatpour et al.~\cite{edalatpour2017generalization}. The iterations then read as
 \begin{equation}\label{Eqggs1}
 (D-E)x^{k+1}-| x^{k+1}| =Fx^{k}+b,\quad k=0,1,\dots
 \end{equation}
 They assumed that the diagonal entries of $A$ are greater than one. 
 %In each iteration of the GGS method (\ref{Eqggs1}), a lower triangular absolute value system
%\begin{equation}\label{abs1}
%(D-E)x-|x| = c,
%\end{equation}
%should be solved where $c$ is a fixed vector. 
The method is described in Algorithm~\ref{algggss}; we present it in a slightly different fashion.

\begin{algorithm}[t]
\caption{Generalized Gauss-Seidel iteration method}\label{algggss}
%\begin{enumerate}
%\item Pick $x^{0}\in \R^{n}$ arbitrarily.
%\item set $s = b_1$
%\item   for $k = 0, 1,\dots,$ do
%\item \qquad  for $i = 1, 2, \dots, n,$ do 
%\item  \qquad \qquad If $s \geq 0,$ then 
%\item  \qquad \qquad \quad $x_{i}^{k+1} \coloneqq \frac{s}{a_{ii}-1}$ 
%\item  \qquad\qquad else
%\item \qquad  \qquad\qquad $x_{i}^{k+1} \coloneqq \frac{s}{a_{ii}+1}$ 
%\item \qquad  \qquad end if 
%\item  \qquad \qquad set $s \coloneqq b_{i+1}-\sum_{j=1}^{i} a_{ij}x_{ j}^{k+1} + \sum_{j=i+2}^{n} a_{ji}x_{ j}^{k}$
%\item \qquad end for
%\item \qquad
% if $x^{k+1}$ solves AVE to a given accuracy, stop.
%\item end for
%\end{enumerate}
\begin{comment} 
\begin{enumerate}
\item Pick $x^{0}\in \R^{n}$ arbitrarily.
\item   for $k = 0, 1,\dots,$ do
\item \qquad  for $i = 1, 2, \dots, n,$ do 
\item  \qquad \qquad set $s \coloneqq b_{i}-\sum_{j=1}^{i-1} a_{ij}x_{ j}^{k+1} - \sum_{j=i+1}^{n} a_{ij}x_{ j}^{k}$
\item  \qquad \qquad If $s \geq 0,$ then 
\item  \qquad \qquad \quad $x_{i}^{k+1} \coloneqq \frac{s}{a_{ii}-1}$ 
\item  \qquad\qquad else
\item \qquad  \qquad\qquad $x_{i}^{k+1} \coloneqq \frac{s}{a_{ii}+1}$ 
\item \qquad  \qquad end if 
\item \qquad end for
\item \qquad
 If $x^{k+1}$ solves AVE to a given accuracy, stop.
\item end for
\end{enumerate} 
\end{comment} 

\begin{algorithmic}[1]
\STATE 
pick $x^0\in \R^n$ arbitrarily
\FOR{$k=0, 1, \dots $}
\FOR{$i=0, 1, \dots,n$}
\STATE
put $s \coloneqq b_{i}-\sum_{j=1}^{i-1} a_{ij}x_{ j}^{k+1} - \sum_{j=i+1}^{n} a_{ij}x_{ j}^{k}$
\IF{$s \geq 0$}
\STATE
put $x_{i}^{k+1} \coloneqq \frac{s}{a_{ii}-1}$ 
\ELSE
\STATE
put $x_{i}^{k+1} \coloneqq \frac{s}{a_{ii}+1}$ 
\ENDIF
\ENDFOR
\IF{$x^{k+1}$ solves AVE to a given accuracy}
\RETURN{solution $x^{k+1}$}
\ENDIF
\ENDFOR
\end{algorithmic} 
\end{algorithm}
%They have shown the convergence property of the proposed method.

\begin{comment}
Theorem~\ref{thmggss} describes the convergence property of the proposed method.

\begin{theorem}\label{thmggss}
Suppose that the AVE (\ref{Eq2}) is solvable. Let the diagonal entries of $A$ are greater than one and the matrix $D - E- I$ is strictly row diagonally dominant. If
$$\| (D-E)^{-1}F\|_{\infty}<1-\| (D-E)^{-1}\|_{\infty},$$
then Eq. (\ref{Eq2}) has a unique solution $x^{\ast}$ and the sequence $\{x^{(k)}\}$ obtained from (\ref{Eqggs1}) converges to the solution $x^{\ast}$.
\end{theorem}

\end{comment}

They also considered the Gauss-Seidel iteration method to solve a preconditioned AVE and suggested an efficient preconditioner to expedite the convergence rate of the method when matrix $A$ is a $Z$-matrix. For this reason, they transformed the AVE into a preconditioned system in the form of GAVE
\begin{equation}\label{pave}
P_{\beta}Ax-P_{\beta}|x| = P_{\beta}b,
\end{equation}
where $P_{\beta} = D + \beta F$. The Generalized Gauss–Seidel iteration method corresponding to the preconditioned system is described in Algorithm~\ref{pggss}.

\begin{algorithm}
\caption{Preconditioned generalized Gauss--Seidel}\label{pggss}
Let 
$ P_{\beta}A =\tilde{D}-\tilde{E}-\tilde{F}$
%be the splitting to the diagonal matrix and strictly lower and upper triangular matrices.
be the splitting to the diagonal, strictly lower and upper triangular matrices.
\begin{comment}
\begin{enumerate} 
%\item 
%Split $P_{\beta}A =\tilde{D}-\tilde{E}-\tilde{F}$ to the diagonal matrix and strictly lower and upper triangular matrices.
\item Pick $x^{0}\in \R^{n}$ arbitrarily.
\item for $ k=0, 1, 2, \dots$ do
\item \qquad
%\begin{align}\label{itAlgPicardBasic}
Solve the following system for $x^{k+1}$,
\[D^{-1}(\tilde{D}-\tilde{E})x^{k+1}-| x^{k+1}|=- \beta D^{-1}F| x^{k}| + D^{-1}\tilde{F} x^{k} + D^{-1}P_{\beta}b,\] 
%\end{align}
\item \qquad
 if $x^{k+1}$ fulfills AVE to a given accuracy, stop.
\item end for
\end{enumerate}
\end{comment}

\begin{algorithmic}[1]
\STATE 
pick $x^0\in \R^n$ arbitrarily
\FOR{$k=0, 1, \dots $}
\STATE
solve the following system for $x^{k+1}$,
\abovedisplayskip=1.5ex\belowdisplayskip=-1ex
\begin{align*}
 D^{-1}(\tilde{D}-\tilde{E})x^{k+1}-| x^{k+1}|
 = \beta D^{-1}F| x^{k}| + D^{-1}\tilde{F} x^{k} + D^{-1}P_{\beta}b
\end{align*}
\IF{$x^{k+1}$ solves AVE to a given accuracy}
\RETURN{solution $x^{k+1}$}
\ENDIF
\ENDFOR
\end{algorithmic} 
\end{algorithm}

\paragraph{Other Approaches.}

He et al.~\cite{he2018two} proposed a method based on Hermitian and skew-Hermitian splitting, a sort of variant to the Picard–HSS iteration method (Algorithm~\ref{PicardHSSIM}).  
Shams et al.~\cite{shams2020iterative} proposed a novel approach called block splitting; it encompasses the Picard iterative method as a specific instance and closely relates with the SOR-like iterations. 
% Based on the shift splitting (SS) technique, Wu and Li in~\cite{wu2020special} have proposed a special SS iteration method for solving AVE~\eqref{Eq2}, which has been obtained by reformulating equivalently the AVE as a two-by-two block nonlinear equation. Theoretical analysis have showed that the special SS method has been absolutely convergent under proper conditions.
  Based on the shift splitting technique, Wu and Li in~\cite{wu2020special} proposed a special iteration method, which was obtained by reformulating the AVE equivalently as a two-by-two block nonlinear equation. 
% Based on the mixed-type splitting (MTS) idea for solving a linear system of equations, a new fast algorithm for solving AVE has been presented by Fakharzadeh and Shams in~\cite{fakharzadeh2020efficient}. This algorithm has had two auxiliary matrices, which have been limited to be nonnegative strictly lower triangular and nonnegative diagonal matrices. The convergence of the algorithm has been discussed via some theorems. In addition, it has been shown that by suitable choice of the auxiliary matrices, the convergence rate of this algorithm has been faster than that of the SOR, AOR, Generalized Newton, Picard and SOR-like methods.
  Based on the mixed-type splitting idea for solving a linear system of equations, a new algorithm for solving the AVE was presented by Fakharzadeh and Shams in~\cite{fakharzadeh2020efficient}. This algorithm utilizes two auxiliary matrices, which were limited to be nonnegative strictly lower triangular and nonnegative diagonal matrices. It was shown that by a suitable choice of the auxiliary matrices, the convergence rate of this algorithm outperforms standard variants of the Picard, generalized Newton, and SOR-like methods.
Picard conjugate gradient algorithm was proposed by Lv and Ma~\cite{lv2017picard}; convergence was proved for a symmetric positive definite matrix $A$ under the assumption of $\|A^{-1}\|_{2}<1$.

To avoid solving the linear systems at each iteration exactly, Yu et al.~\cite{yu2021class} developed an inexact version of the matrix splitting iteration method.
 
For matrix AVE in the form  $AX + B| X| = C$, Dehghan and Shirilord~\cite{dehghan2020matrix} developed a Picard-type matrix splitting iteration method. 

  \subsection{Other  Methods}

In this section, we present methods that approach to solving the AVE or GAVE in a different way. The fact that we mention them in this section does not mean that they are marginal. On the contrary, it turned out that they are very efficient in solving certain types of problems. 

%%%%%
\paragraph{The Sign Accord Algorithm.} 
This algorithm was originally introduced by Rohn~\cite{Roh1989} for computing the particular vertices of the convex hull of the solution set of interval linear equations. Later, Rohn~\cite{rohn2009algorithm} adapted it to solve a more general class of GAVE systems $Ax-B|x|=b$.

\begin{algorithm}[t]
\caption{The sign accord algorithm}\label{algSignancord}
\begin{comment}
\begin{enumerate}
\item  $s\coloneqq \sgn(A^{-1}b)$. \label{algSignancord1}
\item  $x\coloneqq (A-B\diag(s))^{-1}b$
\item $p\coloneqq 0\in\R^n$
\item while $s_kx_k<0$ for some $k$
\item \qquad  $k\coloneqq \min\{i\mmid s_ix_i<0\}$
\item \qquad  $p_k\coloneqq p_k+1$
\item \qquad if $\log_2(p_k)>n-k$, stop ($[A-|B|,A+|B|]$ is not regular)
\item \qquad  $s_k\coloneqq -s_k$
\item \qquad  $x\coloneqq (A-B\diag(s))^{-1}b$ \label{algSignancord9}
\item end while
\end{enumerate}
\end{comment}

\begin{algorithmic}[1]
\STATE 
$s\coloneqq \sgn(A^{-1}b)$  \label{algSignancord1}
\STATE 
$x\coloneqq (A-B\diag(s))^{-1}b$
\STATE 
$p\coloneqq 0\in\R^n$
\WHILE{$s_kx_k<0$ for some $k$}
\STATE
$k\coloneqq \min\{i\mmid s_ix_i<0\}$
\STATE
$p_k\coloneqq p_k+1$
\IF{$\log_2(p_k)>n-k$}
\RETURN{$[A-|B|,A+|B|]$ is not regular}
\ENDIF
\STATE
$s_k\coloneqq -s_k$
\STATE
$x\coloneqq (A-B\diag(s))^{-1}b$ \label{algSignancord9}
\ENDWHILE
\RETURN{solution $x$}
\end{algorithmic} 
\end{algorithm}

The scheme of the method is presented in Algorithm~\ref{algSignancord}. The underlying idea is that we try to find an agreement between a candidate solution $x$ and a sign vector~$s$; in that case, we have a solution of the Gthe AVE (recall that once we know the sign $s$ of the solution, then we are done; the solution is $x=(A-B\diag(s))^{-1}b$). For each iteration, Algorithm~\ref{algSignancord} switches the sign of one entry until we have a solution or exceed the number of iterations.

The algorithm terminates in a finite number of steps. It either finds a unique solution of the GAVE or states that interval matrix $[A-|B|,A+|B|]$ is not regular. By Theorem~\ref{thmalternatives}, the latter means that the GAVE system $Ax-B'|x|=b'$ has not a unique solution (in fact, has infinitely many solutions) for some $|B'|\leq |B|$ and $b'\in\R^n$.
 The case where $[A-|B|,A+|B|]$ is not regular is identified either by singularity of $A$ or $A-B\diag(s)$ or by  a high number of iterations ($\log_2(p_k)>n-k$). Later, Rohn~\cite{Roh2010c} improved the algorithm such that it always returns a singular matrix $S\in [A-|B|,A+|B|]$ in case $[A-|B|,A+|B|]$ is not regular. 

Let us give some practical details. 
The choice of sign vector $s$ in step~\ref{algSignancord1} is optional; the proposed choice is a heuristic supported by numerical experiments. To avoid solving a system of equations from scratch, the original Rohn's method utilizes the Sherman--Morrison formula for $x$ in step~\ref{algSignancord9}; indeed, the matrix change has rank one. 
 The algorithm demonstrates remarkable efficiency, averaging approximately $0.11\cdot n$ iterations per example.

%%%
\emph{Signed Gaussian elimination.}
Radons~\cite{Rad2016} proposed a modified Gaussian elimination to solve the GAVE in the form $x-B|x|=b$, that is, the GAVE with $A=I_n$. The key observation is that if $\|B\|_{\infty}<1$, then there is $i$ such that the sign of $b_i$ coincides with the sign of $x^*_i$, where $x^*$ is the unique solution of the system. Such $i$ can be hard to identify in general; however, there are certain classes of problems for which we can take any $i$ maximizing the value of $|b|_i$. There were found the following classes having this property (the last one was proved in \cite{RadRum2022}):
\begin{enumerate}\label{condSgnGE}
\item 
$\|B\|_{\infty}<\frac{1}{2}$,
\item 
$B$ is irreducible and $\|B\|_{\infty}\leq\frac{1}{2}$,
\item 
$B$ is strictly diagonally dominant and $\|B\|_{\infty}\leq\frac{2}{3}$,
\item 
$|B|$ is symmetric and tridiagonal, $n\geq2$, and $\|B\|_{\infty}<1$.
\end{enumerate}
Once we identify such index $i$, we can replace $|x|_i$ with $x_i$ and perform one step of full-step Gaussian elimination. The above conditions also remain valid for the reduced matrix after the elimination step. Therefore, one can proceed further and complete the Gaussian elimination. The computational cost is cubic, as for the classical Gaussian elimination. The cost of a tridiagonal matrix $B$ corresponds to sorting $n$ ﬂoating point numbers.

\emph{Metaheuristics.}
Since the AVE is generally an intractable problem, we cannot hope for an exact and efficient algorithm to solve the AVE in higher dimensions. That is what motivates the study and use of various heuristics and metaheuristics.

Mansoori et al.~\cite{mansoori2017efficient} designed an efficient neural network model to provide an analytical solution for the AVE. The model is based on the equivalence of the AVE and the linear complementarity problem. They proved the stability of the model using the Lyapunov stability theory and demonstrated global convergence. 

For the AVE with possibly multiple solutions, Moosaei et al.~\cite{moosaei2015minimum} proposed and implemented a simulated annealing algorithm.

%%%
\emph{Transformation to other problems.}
We saw in Section~\ref{RAVELCP} that the AVE is equivalent to the linear complementarity problem (LCP), so in principle, we can solve the AVE by using this transformation employing any solver for the LCP.

Another possible reduction is to mixed-integer linear programming, for which efficient solvers now exist. In Section~\ref{ssOptReform}, we presented some reformulations.

\emph{All solutions.}
Rohn~\cite{rohn2012algorithm} proposed an algorithm to generate all (finitely many) solutions of the GAVE system $Ax-B|x|=b$. It succeeds if and only if matrix $A-B\diag(s)$ is nonsingular for each $s\in\{\pm1\}$; this condition ensures that there are finitely many solutions, see Proposition~\ref{propFinChar}. For matrix updates, it utilizes the Sherman--Morrison formula so that there is no need to solve a system of equations in each step. However, the algorithm is exponential since it processes all orthants. To reduce the number of orthants to be processed,  various bounds of the overall solution set can be employed (see Section~\ref{sssSolvty}). The experiments by Hlad\'{\i}k~\cite{hladik2018bounds} showed that for random data using certain bounds, the orthant reduction is tremendous.

%%%%%
\emph{Others.}
Mansoori and Erfanian~\cite{mansoori2018dynamic} proposed a dynamic system model based on the projection function, enabling the AVE's analytic solution.

Yong at el.~\cite{yong2011feasible} and Achache and Hazzam in~\cite{achache2018solving} reformulated the AVE as a linear complementarity problem and then applied an interior point algorithm to solve it. Designing a suitable interior point algorithm for the original formulation seems not to have been investigated deeply so far.

The homotopy perturbation method is a method designed for solving nonlinear problems. This method gives the solution in an infinite series, converging to an accurate solution under some assumptions. Moosaei et al.~\cite{moosaei2015some} modified the homotopy perturbation method to solve the AVE. 

In \cite{yang2023modified}, a modified Barzilai--Borwein algorithm tailored for addressing the GAVE was introduced. The distinct advantage of this algorithm lies in its avoidance of solving subproblems and the unnecessary requirement of gradient information from the objective function during iterative sequences. The paper established the global convergence of the proposed algorithm under appropriate assumptions. 
%The results are compared with the SOR-like algorithm \cite{ke2017sor}, the GN algorithm \cite{mangasarian2009generalized}, and the sign accord algorithm \cite{rohn2009algorithm}.

%%%%%%%%%%%%%%%%%%%%%%%%%%%%%%%%%%%%%%%%%%%%%%%
\section{Further Aspects of the AVE} \label{Otherapro}

This section reviews other aspects of the AVE regarding solutions, solvability and relations to other areas.
%AVE will be reviewed from other aspects, such as finding minimum norm solutions, correcting infeasible AVE, and so forth.

%%%%%
\subsection{Minimum Norm Solutions for Absolute Value Equation}

If the solution set of the AVE is nonempty, it may have a finite number of solutions (at most $2^n$) or infinitely many solutions. 
%In the former case, it contains at most $2^n$ solutions, lying in mutually different orthants. For example, the AVE $|x|=e$, where $e=(1,\dots,1)^{\top}$, has $2^n$ solutions and the solution sets equal to $\{\pm1\}^n$.
In such cases, selecting a particular solution may be important, and a natural choice is a solution with the minimum norm; cf.~\cite{rosen1990minimum}. 
%In this subsection, we review the AVE in cases where it has multiple solutions for computing its minimum norm solution.

The minimum norm solution problem can be formulated as the optimization problem
\begin{align*}
    \min_{x\in\R^n}\ \|x\|^2 \ \ \st\ \ Ax-|x|=b.
\end{align*}
To solve it, Ketabchi and Moosaei~\cite{ketabchi2012minimum} transformed the problem to an unconstrained optimization problem with a once differentiable objective function; for this problem one, they proposed an extension of Newton’s method with the step size chosen by the Armijo rule. 
In contrast, Ketabchi and Moosaei~\cite{ketabchi2017augmented} investigated this model when 1-norm is used instead of the Euclidean norm. Despite the theoretical complexity, their numerical experiments demonstrated convergence to high accuracy in a few of iterations.

%%%%%%%
\subsection{Sparse Solution of Absolute Value Equation}

Finding a sparse solution to a system is a well-motivated problem. This NP-hard problem, which refers to an optimization problem involving the zero-norm in objectives or constraints, is a nonconvex and nonsmooth optimization
problem. Liu et al.~\cite{liu2016concave} focused on the problem of finding the sparse solution of the AVE, which could be expressed as follows:
 \begin{equation*}
\min\ \|x\|_0 \ \ \st\ \   Ax - |x| = b,
 \end{equation*}
where the zero-norm $\|x\|_0=$ is defined as the number of nonzero entries in~$x$; recall that it is not a real vector norm.
They proposed an algorithm based on a concave programming relaxation, whose main part was solving a series of linear programming problems.

%They assumed that the AVE have at least one solution. Based on the concave function, finding the sparse solution of the absolute value equation was relaxed to concave programming, and its corresponding algorithm was proposed, whose main part was solving a series of linear programming. It has been proved that a sparse solution could be found under the assumption that the connected matrices have range space property.

%%%%%%%
\subsection{Optimal Correction of an Infeasible System of Absolute Value Equation}

Numerous reasons for the infeasibility of a system, including errors in data, errors in modeling, and many other reasons, can be argued. Because the remodeling of a problem, finding its errors, and generally removing its obstacles to feasibility might require a
considerable amount of time and expense and might result in yet another infeasible system, researchers are reluctant to do so. Researchers, therefore, focused on the optimal correction of the given system, where optimality means the least changes in data.

Ketabchi and Moosaei~\cite{ketabchi2012efficient} considered minimal correction in the right-hand side vector~$b$. This leads to the optimization problem
\begin{equation}\label{Eq6}
\min_{x\in {\R}^n}\ \| Ax-|x|-b\|^2.
\end{equation} 
Its optimal solution $x^*$ is the solution of the corrected AVE, and the corresponding corrected right-hand side $b'$ reads as $b'=Ax^*-|x^*|$.
%Further, they proved that provided that $\sigma_{\min}(A)\geq 1$, then the problem \eqref{Eq6} is convex. 
%NO, consider AVE 1.1*x-abs(x)+1=0
To solve problem~\eqref{Eq6}, they adopted a generalized Newton method. 
 
Ketabchi et al.~\cite{ketabchi2013optimal} extended the result to the case of correction in both the coefficient matrix and the right-hand side vector. The problem is formulated as the optimization problem
\begin{align}\label{minOptCorrXRr}
 \min_{x,R,r}\ \|(R\mid r)\|_F
 \ \ \st\ \ (A+R)x-|x|=b+r,
\end{align}
where $\|\cdot\|_F$ denotes the Frobenius norm. 
It was shown that the problem could be reduced to the nonconvex fractional problem
\begin{align}\label{minOptCorrX}
 \min_{x\in \R^n}\ \frac{\| Ax-|x|-b\|^2}{1+\|x\|^2}.
\end{align}
If $x^*$ is its optimal solution, then the minimum correction values are
\begin{align*}
 R \coloneqq -\frac{ Ax^*-|x^*|-b}{1+\|x\|^2}(x^*)^\top,\quad
 r \coloneqq -\frac{ Ax^*-|x^*|-b}{1+\|x\|^2}.
\end{align*}
To solve the optimization problem \eqref{minOptCorrX}, Ketabchi et al.~\cite{ketabchi2013optimal} designed a genetic algorithm. Moosaei at el.~\cite{moosaei2021optimal} proposed several methods, including an exact but time-consuming orthant enumeration method, a variation of Newton's method, and a lower bound approximation based on the reformulation-linearization technique.

Since solving problem \eqref{minOptCorrX} sometimes leads to solutions with very large norms that are practically impossible to use, Moosaei at el.~\cite{moosaei2016tikhonov} utilized Tikhonov regularization
to control the norm of the resulting vector. 
%In 2019, Moosaei and Ketabchi \cite{moosaei2019optimal} considered problem \eqref{Eq6} as follows:
 %\begin{equation}\label{Eq8}
%\mathop{\min}\limits_{x\in {\R}^{n}} \| Ax-| x|-b\|^{2}+\rho \| x\|^{2},
%\end{equation}
%where $\rho>0$ is a positive parameter.
%They used the $l_2$- norm with a smooth reformulation process and obtained an unconstrained optimization problem with a twice differentiable quadratic objective function. To solve it, used a quadratically convergent Newton algorithm.
Hashemi and Ketabchi~\cite{hashemi2019optimal} considered the infeasible GAVE and proposed an exact regularization method in the form
\begin{align*}%\label{minOptCorrXlam}
 \min_{x\in \R^n}\ \frac{\| Ax-B|x|-b\|^2}{1+\|x\|^2}+\lambda\|x\|^p_p,
\end{align*}    
where $\lambda>0$ is a parameter. 
They proposed a DC (difference of convex functions) algorithm to solve it. 
In the following study, Hashemi and Ketabchi~\cite{hashemi2020numerical} used the classical Dinkelbach's transform to handle the fractional objective function, and they also applied the smoothing functions to write it as a smooth problem.

Moosaei at el.~\cite{moosaei2021optimal} extended formula \eqref{minOptCorrX} to other norms. If we employ the spectral norm in \eqref{minOptCorrXRr}, then the formula remains the same. If we employ the Chebyshev (component-wise maximum) norm in \eqref{minOptCorrXRr}, then the formula takes the form
\begin{align*}
 \min_{x\in \R^n}\ \frac{\| Ax-|x|-b\|_{\infty}}{1+\|x\|_1}.
\end{align*}

Notice that the minimum in \eqref{minOptCorrX} may or may not be achieved. Moosaei at el.~\cite{moosaei2021optimal} derived some sufficient conditions for the existence of the minimum.

%In 2021, Moosaei at el.~\cite{moosaei2021optimal} studied the optimal correction of the inconsistent AVE system \eqref{Eq2} by making minimal changes in the coefficient matrix and the right hand side vector with the spectral norm. This problem leads to an unconstrained optimization problem so that the objective function is not necessarily convex and the minimum need not be attained in general. That is why they proposed a sufficient condition for attainability of the minimum. This problem was reduced to minimization of a single variable function which is unimodal; to solve it a bisection method was introduced. The difficulty of the suggested algorithm lies in solving a constraint nonconvex quadratic programming problem. For obtaining an optimal solution, two methods proposed. Also a lower bound for this problem was found by using the dual problem and the reformulation–linearization-technique. 

%%%%%%%%%%%%%%%%%%%%%%%%%%%%%%%%%%%%%%%%%%%
\subsection{Error Bounds}

In fact, most of the methods to solving the AVE compute only an approximate solution $x^*$ such that $Ax^*-|x^*|\approx b$. This is due to the essence of the iterative methods and because of the rounding errors of the floating-point arithmetic. A small residual value, i.e., $\|Ax^*-|x^*|-b\|<\varepsilon$, does not guarantee that $x^*$ is close to the true solution. That is why we need some error bounds.

Verification~\cite{Rum2010} is a technique that calculates numerically rigorous bounds for an approximate solution. To this end, methods of interval computations are often used. For the AVE, a verification algorithm was proposed by 
Wang et al.~\cite{wang2011interval}; it is based an an iterative $\epsilon$-inflation method to obtain an interval enclosure containing the unique solution. The width of this interval serves as an error estimate then.
Another verification techniques based on interval methods were proposed by Wang et al.~\cite{WanLiu2013,WanCao2017}. 
 
Classical-type error bounds were studied by Zamani and Hlad\'{\i}k~\cite{ZamHla2023a}. Let $[A-I_n,A+I_n]$ be regular and $x^*$ the unique solution of the AVE. Ten for every $x\in\R^n$ we have
\begin{align*}
    \|x-x^*\| \leq c(A) \cdot \|Ax-|x|-b\|,
\end{align*}
where 
\begin{align*}
   c(A) = \max_{|D|\leq I_n} \|(A-D)^{-1}\|
\end{align*}
is the proposed condition number for the AVE. A finite reduction is known: the value $c(A)$ is achieved for $D$ such that $|D|=I_n$. When matrix $p$-norm is used, the condition number is NP-hard to compute for $p\in\{1,\infty\}$, and the complexity is unknown for $p=2$. 

Besides this, also a relative condition number was introduced in the form
\begin{align*}
 c^*(A) = \max_{|D|\leq I_n} \|(A-D)^{-1}\| \cdot 
    \max_{|D|\leq I_n} \|A-D\|.
\end{align*}
If $b\not=0$, then the relative error bounds
\begin{align*}
c^*(A)^{-1} \frac{\|Ax-|x|-b\|}{\|b\|}
 \leq \frac{\|x-x^*\|}{\|x^*\|} 
 \leq c^*(A) \frac{\|Ax-|x|-b\|}{\|b\|}
\end{align*}
are valid.
Zamani and Hlad\'{\i}k~\cite{ZamHla2023a} also showed how to utilize the error bounds in proving convergence results for the generalized Newton and Picard methods.

%%%%%%%%%%%%%%%%%%%%%%%%%%%%%%%%%%%%%%%%%%%
\subsection{Relation to Interval Analysis}\label{ssRelIA}

There is a close connection between the AVE/GAVE and interval analysis. We already noticed the usage of interval matrices in expressing the conditions for the unique solvability of the AVE (Theorem~\ref{thmAveUnSol}) and the GAVE (Theorem~\ref{thmGaveCharT}). In this section, we look into other relations, particularly the relation of the AVE/GAVE with interval systems of linear equations.

%%%
\paragraph{GAVE in the Context of Interval Analysis.} 
Let $[\umace{A},\omace{A}]=[\Mid{A}-\Rad{A},\Mid{A}+\Rad{A}]$ be an interval matrix of size $n\times n$ and $[\uvr{b},\ovr{b}]=[\Mid{b}-\Rad{b},\Mid{b}+\Rad{b}]$ an interval vector of length~$n$ (see the notation on page~\pageref{dfIntMat}). Then, an interval system of linear equations~\cite{Neu1990} is the set of systems 
$$
Ax=b,\quad \mbox{where } A\in [\umace{A},\omace{A}],\ b\in[\uvr{b},\ovr{b}].
$$
In short we often write $[\umace{A},\omace{A}]x=[\uvr{b},\ovr{b}]$. The solution set is defined as
\begin{align*}
\Sigma=\{x\mmid Ax=b,\ \umace{A}\leq A\leq \omace{A},\ \uvr{b}\leq b\leq\ovr{b}\}.
\end{align*}
That is, $\Sigma$ is the set of all solutions of all realizations of the system of equations. The solution set has a complicated structure, similar to the AVE in a certain aspect. The set $\Sigma$ need not be convex, but in any orthant, it forms a convex polyhedron. Hence, $\Sigma$ may consist of a union of up to $2^n$ convex polyhedra. 

When $[\umace{A},\omace{A}]$ is regular, the solution set is connected. In this case, Rohn~\cite{Roh1989} discovered that the most important points of $\Sigma$, the vertices of its convex hull $\conv\Sigma$, are solutions of special GAVEs.

\begin{theorem}
If $[\umace{A},\omace{A}]$ is regular, then 
%the convex hull of the solution set reads
\begin{align*}
\conv\Sigma=\{x_s\mmid s\in\{\pm1\}^n\},
\end{align*}
where $x_s$ is the unique solution of the GAVE
\begin{align}\label{gaveThmRohnConv}
\Mid{A}x-\diag(s)\Rad{A}|x|=\Rad{b}+\diag(s)\Rad{b}.
\end{align}
\end{theorem}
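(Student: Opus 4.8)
The claim is about characterizing the vertices of $\conv\Sigma$ for a regular interval linear system $[\underline{A},\overline{A}]x = [\underline{b},\overline{b}]$. The plan is to exploit the orthant-by-orthant structure of $\Sigma$ together with a monotonicity/extremization argument. First I would fix a sign vector $s\in\{\pm1\}^n$ and restrict attention to the orthant where $\diag(s)x\geq0$, i.e.\ where $|x|=\diag(s)x$ so that $x_i$ and $s_i$ agree in sign. Within this orthant, a point $x$ lies in $\Sigma$ iff there exist $A\in[\underline{A},\overline{A}]$ and $b\in[\underline{b},\overline{b}]$ with $Ax=b$. The strategy is to show that the ``extreme'' point of $\Sigma$ in the direction associated with $s$ is obtained by pushing each interval parameter to whichever endpoint maximizes displacement in that direction.

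**Reducing to the Oettli--Prager characterization.**

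The key tool is the Oettli--Prager theorem, which says $x\in\Sigma$ iff $|\underline{A}^c x - \underline{b}^c|\leq \Delta_A|x| + \Delta_b$, using the midpoint/radius notation $[\underline{A},\overline{A}]=[A^c-A^\Delta, A^c+A^\Delta]$. I would write this as the pair of inequalities
\begin{align*}
-A^\Delta|x| - b^\Delta \leq A^c x - b^c \leq A^\Delta|x| + b^\Delta.
\end{align*}
In the orthant fixed by $s$, one uses $|x|=\diag(s)x$; the vertices of $\conv\Sigma$ in this orthant are the points where the relevant inequalities become active. The candidate vertex $x_s$ should be where the system saturates one of these bounds in the sign pattern dictated by $s$. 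Substituting the GAVE \eqref{gaveThmRohnConv}, namely $A^c x - \diag(s)A^\Delta|x| = b^\Delta + \diag(s)b^\Delta$, and checking it sits on the boundary of the Oettli--Prager region is the concrete computation to verify.

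**Identifying the vertex and proving regularity gives uniqueness.**

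Next I would argue that each $x_s$ is indeed a vertex (an extreme point) and that these exhaust the vertex set. Since $[\underline{A},\overline{A}]$ is regular, Theorem~\ref{thmalternatives} (with $D=A^\Delta$) guarantees each GAVE \eqref{gaveThmRohnConv} has a \emph{unique} solution, because \eqref{gaveThmRohnConv} has coefficient matrix $A^c$ and absolute-value matrix $\diag(s)A^\Delta$ with $|\diag(s)A^\Delta|\leq A^\Delta$, so unique solvability follows from regularity. That $x_s$ is a vertex of $\conv\Sigma$ comes from the fact that it optimizes a suitable linear functional over $\Sigma$: for the direction $s$, the point maximizing $s^\top x$-type objectives over the polyhedral pieces lands exactly on this saturated face. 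Conversely, any vertex of $\conv\Sigma$ is an extreme point, hence attained by some optimal sign configuration, giving some $x_s$.

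**The main obstacle.**

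The hard part will be the direction that every vertex of $\conv\Sigma$ arises as some $x_s$, rather than merely that each $x_s$ lies in $\conv\Sigma$. The inclusion $\{x_s\}\subseteq\conv\Sigma$ and that each $x_s$ is extreme is the easier half; the completeness (no vertex is missed, and the convex hull of the $2^n$ points $x_s$ is \emph{all} of $\conv\Sigma$) requires showing that the support function of $\Sigma$ in every direction is attained at one of these finitely many points. This amounts to a careful case analysis over orthants combined with the fact that in each orthant $\Sigma$ is a polyhedron whose relevant extreme points are generated precisely by saturating the Oettli--Prager inequalities with a consistent sign vector. I would handle this by showing that maximizing any linear functional $c^\top x$ over $\Sigma$ reduces, via the Oettli--Prager description, to choosing endpoint realizations of $A$ and $b$, which in turn corresponds to a unique sign vector $s$ and hence to $x_s$.
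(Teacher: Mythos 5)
Your proposal cannot be compared against a proof in the paper itself: the paper states this theorem as a known result of Rohn~\cite{Roh1989} and supplies only one ingredient, namely that regularity of $[\umace{A},\omace{A}]$ makes each GAVE \eqref{gaveThmRohnConv} uniquely solvable (via Theorem~\ref{thmalternatives}/Theorem~\ref{thmgg}), so that $x_s$ is well defined. You reproduce that ingredient correctly, and your skeleton (Oettli--Prager description of $\Sigma$, orthant decomposition, support-function argument for the hard inclusion) is indeed the standard route to Rohn's theorem. Note, however, that the statement must be read as $\conv\Sigma=\conv\{x_s\mmid s\in\{\pm1\}^n\}$, with right-hand side $\Mid{b}+\diag(s)\Rad{b}$ in \eqref{gaveThmRohnConv}: a convex hull cannot equal a finite set, and with $\Rad{b}$ in place of $\Mid{b}$ the point $x_s$ would not saturate the Oettli--Prager inequality, which your own verification step tacitly assumes it does.

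Two genuine gaps remain. The smaller one: your claim that every $x_s$ is an extreme point of $\conv\Sigma$ is false in general --- distinct sign vectors can yield coinciding or non-extreme points (take $\Rad{A}=0$, $\Rad{b}=0$: all $2^n$ points $x_s$ coincide); fortunately the theorem only needs $\{x_s\}\subseteq\Sigma$ together with the reverse hull inclusion. The serious gap is exactly the step you defer: you assert that maximizing $c^{\top}x$ over $\Sigma$ "reduces to choosing endpoint realizations of $A$ and $b$, which in turn corresponds to a unique sign vector $s$." Endpoint realizations of the data produce $2^{n^2+n}$ candidate solutions $A^{-1}b$, not $2^n$; the entire content of the theorem is that an optimal realization has the rank-one deviation pattern $a_{ij}=\Mid{a}_{ij}-s_iz_j\Rad{a}_{ij}$, $b_i=\Mid{b}_i+s_i\Rad{b}_i$ with $z=\sgn(x^{*})$. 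Establishing this requires the monotonicity argument you never state: by regularity and the Sherman--Morrison formula, $c^{\top}A^{-1}b$ is monotone in each entry $a_{ij}$ over its interval, with derivative $-(A^{-\top}c)_i\,(A^{-1}b)_j$, so at a maximizer the deviation signs factor as $s_iz_j$ with $s=\sgn(A^{-\top}c)$, forcing $\Mid{A}x^{*}-\diag(s)\Rad{A}|x^{*}|=\Mid{b}+\diag(s)\Rad{b}$, i.e.\ $x^{*}=x_s$ (with a perturbation or exchange argument to handle vanishing derivatives, since pushing one entry to its endpoint changes the signs governing the others). Relatedly, your orthant picture is incomplete: vertices of a piece $\Sigma\cap\{x\mmid\diag(z)x\geq0\}$ may have active constraints $x_i=0$ rather than $n$ saturated Oettli--Prager rows, and such points must still be shown to lie in $\conv\{x_s\}$. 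Without these arguments the inclusion $\Sigma\subseteq\conv\{x_s\}$ --- which you yourself identify as the hard half --- is not proved.
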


Indeed, due to regularity of $[\umace{A},\omace{A}]$, the GAVE \eqref{gaveThmRohnConv} has a unique solution (see Theorem~\ref{thmgg}).
To solve the GAVE, Rohn also proposed a so-called \emph{sign accord algorithm}; see Algorithm~\ref{algSignancord} for a generalized version. 
%To solve this GAVE, Rohn also proposed a so called \emph{sign accord algorithm}. It is an iterative method, in which one wants to match a sign vector $z\in\{\pm1\}^n$ with the sign of $x$; when succeeded (i.e., $z=\sgn(x)$), the vector $x$ solves the GAVE. If they do not match, then we change one sign element of $z$ (that element which is the most corrupted in some sense). This iterations always converge. Usually, it takes at most $n$ iterations, but there are also examples of the maximum of $2^n$ iterations.

GAVE systems also appear in different characterizations of the regularity of interval matrices. Rohn\cite{Roh2009} surveyed 40 necessary and sufficient conditions for regularity, and many of them have the form of the GAVE. We do not present all of them; as a teaser we chose just one: Interval matrix $[\umace{A},\omace{A}]$ is regular if and only if for each $s\in\{\pm1\}^n$, the GAVE
$$
\Mid{A}x-\diag(s)\Rad{A}|x|=s
$$
has a solution.

%%%
\paragraph{GAVE by Means of Interval Systems.} 
Absolute value systems appear often in the context of interval linear equations. The opposite direction, i.e., to express the solution set of the AVE or GAVE by means of an interval system, is less common and less studied but still possible.

Consider an interval system of linear inequalities $[\umace{A},\omace{A}]x\leq[\uvr{b},\ovr{b}]$, which is a shortage for the family
$$
Ax\leq b,\quad \mbox{where } A\in [\umace{A},\omace{A}],\ b\in[\uvr{b},\ovr{b}].
$$
There are two basic types of solutions associated to it~\cite{Roh2006a}. A vector $x$ is a \emph{weak} [\emph{strong}] solution if it satisfies the system $Ax\leq b$ for some [for every] $A\in [\umace{A},\omace{A}]$ and $b\in[\uvr{b},\ovr{b}]$. Weak solutions are characterized by the Gerlach theorem as \cite{Ger1981,Roh2006a}
\begin{align}\label{ineqGerlach}
 \Mid{A}x - \Rad{A}|x| \leq \ovr{b},    
\end{align}
whereas the strong solutions are described by the system
\begin{align}\label{ineqStrSol}
\Mid{A}x + \Rad{A}|x| \leq \uvr{b}.
\end{align}

Now, we rewrite the AVE equivalently as double inequalities
$$
Ax-|x|\leq b,\ \ -Ax+|x|\leq-b.
$$
By \eqref{ineqGerlach}, the former characterizes the weak solutions of the interval system $[A-I_n,A+I_n]x\leq b$.
By \eqref{ineqStrSol}, the latter inequality characterizes the strong solutions of the interval system $-[A-I_n,A+I_n]x\leq -b$.  To sum up, we have the following statement~\cite{HlaMos2022u}; we use $\exists$ and $\forall$ quantifiers to denote weak and strong solutions, respectively.

\begin{proposition}
The AVE system $Ax-|x|=b$ is equivalent to the interval system of linear inequalities
\begin{align*}
[A-I_n,A+I_n]^{\exists}x\leq b,\ \
[A-I_n,A+I_n]^{\forall}x\geq b.
\end{align*}
\end{proposition}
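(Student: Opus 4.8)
The plan is to rewrite the AVE equation $Ax-|x|=b$ as a pair of inequalities and then recognize each inequality as characterizing a weak or strong solution set of a suitable interval inequality system, invoking the Gerlach characterizations \eqref{ineqGerlach} and \eqref{ineqStrSol} already stated in the excerpt. The equation $Ax-|x|=b$ is equivalent to the conjunction $Ax-|x|\leq b$ and $Ax-|x|\geq b$, so the whole task reduces to matching each of these two inequalities to the correct interval solution concept.

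First I would handle the inequality $Ax-|x|\leq b$. I want to read the interval matrix $[A-I_n,A+I_n]$ in the midpoint-radius form $[\Mid{M}-\Rad{M},\Mid{M}+\Rad{M}]$ with $\Mid{M}=A$ and $\Rad{M}=I_n$. Applying the Gerlach characterization \eqref{ineqGerlach} of weak solutions to the interval system $[A-I_n,A+I_n]x\leq b$ (here the right-hand side is the degenerate interval $b$, so $\ovr{b}=b$) yields exactly $Ax-|x|\leq b$. Hence $x$ satisfies $Ax-|x|\leq b$ if and only if $x$ is a weak solution of $[A-I_n,A+I_n]^{\exists}x\leq b$.

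Next I would treat $Ax-|x|\geq b$, which is the same as $-Ax+|x|\leq -b$. Writing this as $(-A)x+|x|\leq -b$, I identify the interval matrix with midpoint $-A$ and radius $I_n$, i.e.\ $-[A-I_n,A+I_n]=[-A-I_n,-A+I_n]$, and the degenerate right-hand side $\uvr{b}=-b$. The strong-solution characterization \eqref{ineqStrSol} reads $\Mid{A}x+\Rad{A}|x|\leq\uvr{b}$, which with these data becomes $-Ax+|x|\leq -b$, precisely our inequality. Therefore $x$ satisfies $Ax-|x|\geq b$ if and only if $x$ is a strong solution of $-[A-I_n,A+I_n]^{\forall}x\leq -b$, equivalently $[A-I_n,A+I_n]^{\forall}x\geq b$ after multiplying through by $-1$. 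Combining the two equivalences gives the claim.

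The only genuine subtlety — the step I expect to be the main obstacle — is being careful that the Gerlach characterization is being applied to a degenerate (point) right-hand side interval and with the correct sign bookkeeping when passing to the strong-solution case. One must verify that flipping the inequality to $-Ax+|x|\leq -b$ indeed corresponds to negating the midpoint while leaving the radius $I_n$ unchanged (radii are nonnegative, so $\Rad{M}$ stays $I_n$), and that the strong-solution concept is the right match because the universal quantifier over $A\in[A-I_n,A+I_n]$ and the (trivial) quantifier over the degenerate $b$ together force the worst-case inequality $\Mid{A}x+\Rad{A}|x|\leq\uvr{b}$. Once the dictionary between the two formulations is set up correctly, the rest is immediate substitution.
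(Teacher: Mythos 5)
Your proposal is correct and follows essentially the same route as the paper: split $Ax-|x|=b$ into $Ax-|x|\leq b$ and $-Ax+|x|\leq -b$, identify the first with weak solutions of $[A-I_n,A+I_n]x\leq b$ via the Gerlach characterization \eqref{ineqGerlach}, and the second with strong solutions of $-[A-I_n,A+I_n]x\leq -b$ via \eqref{ineqStrSol}. Your extra care with the degenerate right-hand side and the sign of the midpoint under negation is exactly the bookkeeping the paper leaves implicit.
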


Notice that by swapping the quantifiers, the interval system
\begin{align*}
[A-I_n,A+I_n]^{\forall}x\leq b,\ \
[A-I_n,A+I_n]^{\exists}x\geq b.
\end{align*}
then characterizes a different AVE, namely $Ax+|x|=b$.

%%%%%%%%%%%%%%%%%%%%%%%%%%%%%%%%%%%%%%%%%%%
\section{Challenges and Open Problems} \label{Challenges}
%%%%%%%%%%%%%%%%%%%%%%%%%%%%%%%%%%%%%%%%%%%

Despite this considerable success and papers published, many challenges and critical questions remain unanswered. In this section, we attempt to highlight some of the main issues concerning the AVE and GAVE from different perspectives and outline some open problems and future research directions.

%{\hm [I will remove.]
%\begin{itemize}
%    \item
    
 %    [\textbf{Correcting of Infeasible System}]
     % The system $Ax+B|x|^p=b$ {\mh[Do you have a motivation for this form?]} is more general than AVE and even GAVE. Considering it in four aspects, i.e., theory, finding its solution, finding minimum norm solution and correcting when it is infeasible, can be exciting problems.
      %  \item 
        
      %   [\textbf{Comparing all Existing Algorithms}]
       % Comparing all solution methods in accuracy, time, complexity,  %and special conditions on random problems can be an interesting job.

%        \item Overdetermined and underdetermined AVE.
%  \item
%  [\textbf{Complexity}] 
 %         Since AVE is an NP-hard problem, it would be desirable to identify more polynomially solvable sub-problems. The techniques from parametrized complexity can help here, too; the complexity could be parametrized by a suitable characteristic such as the rank of the constraint matrix.
         
%  \item 
% [\textbf{Generalizations}]
%      Already Mangasarian \cite{mangasarian2007absolute} introduced absolute value programming as mathematical programs with absolute values. Thus it is inspiring to investigate absolute value inequalities or even more general optimization problems involving absolute value objective functions.
        
 % \end{itemize}}

%%%%%%%%%%%%%%%%%%%%%%%%%%%%%%%%%%%%%%%%%%%%%%%%%%%%%%%%

%\subsection{Generalizations}

%%%
\paragraph{Complexity.} 
Since the AVE is an NP-hard problem, it would be desirable to identify more polynomially solvable sub-problems. The techniques from parameterized complexity can help here, too; the complexity could be parameterized by a suitable characteristic such as the rank of the constraint matrix.
          
%%%
\paragraph{Overdetermined and Underdetermined AVE.} 
In this paper, we considered merely the square AVE and GAVE, that is, absolute value systems of $n$ equations with $n$ variables. Overdetermined and underdetermined systems can appear, too. For instance, in the proof of Theorem~\ref{thmNpHardSP}, the reduction from the Set-Partitioning problem first resulted in an overdetermined GAVE system of $n+1$ equations with $n$ variables. To the best of our knowledge, nonsquare AVE and GAVE have been almost untouched so far.
 
%%%
\paragraph{Absolute Value Inequalities.} 
There is a natural step from absolute value equations to absolute value inequalities. While the inequality system $Ax+|x|\leq b$ describes a convex polyhedron a thus it is easy to deal with it in many aspects, the inequality system $Ax+|x|\geq b$ is again tough.

Absolute value inequalities often appear in the area of interval systems of linear equations and inequalities (see Section~\ref{ssRelIA}). Recall that the weak and strong solutions of an interval system of linear inequalities $[\umace{A},\omace{A}]x\leq[\uvr{b},\ovr{b}]$ are characterized by \eqref{ineqGerlach} and \eqref{ineqStrSol}, respectively. Analogously, the weak solutions of an interval system of linear equations $[\umace{A},\omace{A}]x=[\uvr{b},\ovr{b}]$ are described by an absolute value system \cite{OetPra1964,Roh2006a}
\begin{align*}
 \Mid{A}x - \Rad{A}|x| \leq \ovr{b},\ \ 
 \Mid{A}x + \Rad{A}|x| \geq \uvr{b},    
\end{align*}

Most of what is known about absolute value inequalities was derived in the context of interval linear systems. However, there are also few independent results~\cite{HlaHar2023au}, and surely there is room for deeper insights.

%%%
\paragraph{Absolute Value Programming.} 
Absolute value programming was introduced by Mangasarian~\cite{mangasarian2007absolute} and refers to mathematical programming problems involving absolute values. There have not been many results in this direction~\cite{HlaHar2023au}; the efforts have been expended primarily on absolute value equations.

There are, however, problems leading to linear programs with absolute values. We mention briefly a problem arising in interval linear programming. Consider a linear program
\begin{align*}
f(A,b,c)=\min\ c^{\top} x \ \ \st\ \ Ax\leq b,
\end{align*}
and suppose that matrix $A$ and vectors $b$ and $c$ come from an interval matrix $[\umace{A},\omace{A}]$ and interval vectors $[\uvr{b},\ovr{b}]$ and $[\uvr{c},\ovr{c}]$, respectively. The best and worst-case optimal values are defined, respectively, as
\begin{align*}
\unum{f}&\coloneqq \min\ f(A,b,c)\ \ \st\ \ 
 A\in[\umace{A},\omace{A}],\ 
  b\in[\uvr{b},\ovr{b}],\ c\in [\uvr{c},\ovr{c}],\\
\onum{f}&\coloneqq \max\ f(A,b,c)\ \ \st\ \ 
 A\in[\umace{A},\omace{A}],\ 
  b\in[\uvr{b},\ovr{b}],\ c\in [\uvr{c},\ovr{c}].
\end{align*}
These values can be expressed by means of an absolute value linear programs~\cite{Hla2012a}
\begin{align}\label{fUnumIlpAvp}
\unum{f} &=\min\ (\Mid{c})^{\top} x-(\Rad{c})^{\top} |x|\ \  \st\ \ 
  \Mid{A}x-\Rad{A}|x|\leq\ovr{b},\\
\label{fOnumIlpAvp}
\onum{f} &=\min\ (\Mid{c})^{\top} x+(\Rad{c})^{\top} |x|\ \  \st\ \ 
  \Mid{A}x+\Rad{A}|x|\leq\uvr{b}.
\end{align}
We cannot easily simplify \eqref{fUnumIlpAvp} since it was proved~\cite{GabMur2008} that computing $\unum{f}$ is an NP-hard problem. On the other hand, \eqref{fOnumIlpAvp} can be reformulated as an ordinary linear program
\begin{align*}
\onum{f} =\min\ (\Mid{c})^{\top} x+(\Rad{c})^{\top} y\ \  \st\ \ 
  \Mid{A}x+\Rad{A}y\leq\uvr{b},\ -y\leq x\leq y.
\end{align*}

%%%%%
\paragraph{Tensor Absolute Value Equations.}

In recent years, tensor absolute value equations have gained increasing attention. So far, researchers have considered various special tensor classes; they derived sufficient conditions for solvability and uniqueness and proposed numerical methods \cite{BeiKal2022, cui2022existence, CuiLia2022, DuZha2018, JiaLi2021}.
Nevertheless, there are many open problems and challenging directions. For example, a complete characterization of unique solvability is still unknown, even for those particular tensor absolute value equations having special structures.
%{\mh[maybe slightly extend]}

%%%
%\paragraph{Robust Solutions for Absolute Value Equations and Generalized Absolute Value Equations.}
\paragraph{Robust Solutions for the AVE and GAVE.}
In the realm of (generalized) absolute value equations, an open problem that warrants exploration is the development of robust solution methodologies that can effectively handle uncertain data. 

The challenge lies in extending the theory and algorithms for AVE and GAVE to accommodate uncertainty in the coefficients and constants of the equations. Robust solutions should provide stable results even when the data is subject to variations or perturbations within known uncertainty bounds.

In this regard, few results are known for absolute value equations. For systems with interval data, Raayatpanah et al.~\cite{RaaMoo2020} proposed a robust optimization approach to solve it, and \cite{HlaPta2023u} investigated its (unique) solvability and the overall solutions set. 
 Robust solution models for systems with data uncertainty in $\ell_1$ and $\ell_{\infty}$ norms were addressed in Lu et al.~\cite{LuMa2023}.

One key aspect to consider is the characterization and tight approximation of the set of feasible solutions for uncertain AVE and GAVE. Robust optimization techniques can be employed to identify solutions that remain valid under a range of uncertainty scenarios, ensuring stability and feasibility across different problem instances.

Additionally, the computational complexity of solving robust AVE and GAVE should be studied. Efficient algorithms that can handle uncertainty while maintaining acceptable computational performance are essential for practical applications.

Furthermore, developing robust solution methods for the AVE and GAVE that are amenable to scalable implementations is crucial. This would enable the application of these techniques to large-scale real-world problems, such as those encountered in engineering, finance, and machine learning.

%%%%%
\paragraph{Comparing Solution Methods for the AVE.}   
One of the significant challenges in studying absolute value equations lies in comparing the performance of existing solution methods. Conducting comprehensive comparisons regarding accuracy, computational time, complexity, and effectiveness under specific conditions on real data or randomly generated problems can provide valuable insights.
So far, no thorough numerical study and a comprehensive comparison of different methods has been carried out. 

Comparing the performance of various solution methods can help identify the strengths and weaknesses of each approach and provide guidelines for selecting the most suitable method for different scenarios. It allows researchers and practitioners to assess the trade-offs between accuracy and computational efficiency, enabling informed decision-making in practical applications.

Additionally, exploring the behavior and robustness of different solution methods under special conditions, such as ill-conditioned systems or highly sparse matrices, can reveal their limitations and highlight areas for improvement. These investigations can lead to the development of enhanced algorithms that offer superior performance in challenging problem instances.

%%%%%
\paragraph{Incorporating Sparsity in Complexity Bounds for AVE Algorithms.}
Practical optimization algorithms commonly leverage matrix sparsity for enhanced effectiveness. However, conventional complexity bounds often overlook the explicit consideration of matrix sparsity, particularly in the context of the AVE. The open problem addresses the need to develop a complexity bound for an algorithm that explicitly incorporates the sparsity of matrices in AVE scenarios.

This challenge is significant, as the existing connection between sparsity and the computational complexity of solving a single system of linear equations, especially in the context of AVE, is yet to be conclusively established \cite{pardalos1992open}. To tackle this open problem, future research could explore innovative methodologies to bridge the gap between sparsity-aware algorithms and complexity analyses. By doing so, researchers aim to establish a nuanced understanding of how sparsity and complexity interact in AVE scenarios. Addressing this open problem could lead to the development of more effective and tailored algorithms, ultimately enhancing the optimization of systems involving sparse matrices, particularly in the context of the AVE.
%By systematically evaluating and comparing existing solution methods, researchers can gain a deeper understanding of their capabilities, limitations, and applicability in diverse contexts. This, in turn, opens up opportunities for advancements in the field and provides a foundation for the development of new algorithms and techniques that overcome current challenges and improve the overall effectiveness of solving AVE.

%%%%%%%%%%%%%%%%%%%%%%%%%%%%%%%%%%%%%%%%%%%%%%%%%%%%%%%%
\begin{comment}
    
\section{Conclusion} \label{Conclu}

{\mh[maybe not needed, the previous section seems sufficient to conclude]}

The investigation of absolute value equations (AVE) presents an intriguing problem in the realms of optimization and numerical analysis with significant practical applications. In this paper, we have provided a comprehensive survey of various aspects related to AVE. We have examined its applications, discussed fundamental theoretical results, explored algorithms for solving AVE, addressed the determination of minimum norm solutions, investigated optimal correction methods for infeasible AVE, and highlighted existing challenges and open problems in this field. By delving into these aspects, we hope to contribute to the advancement of research and understanding in the domain of AVE.

\end{comment}

\section*{Conflict of Interest}
 The authors declare that they have no conflict of interest.
%\begin{acknowledgements}
%If you'd like to thank anyone, place your comments here
%and remove the percent signs.
%\end{acknowledgements}

% BibTeX users please use one of
%\bibliographystyle{spbasic}      % basic style, author-year citations
\bibliographystyle{spmpsci}      % mathematics and physical sciences
\bibliography{mybib}   % name your BibTeX data base

% Non-BibTeX users please use
%\begin{thebibliography}{}
%%
%% and use \bibitem to create references. Consult the Instructions
%% for authors for reference list style.
%%
%\bibitem{RefJ}
%% Format for Journal Reference
%Author, Article title, Journal, Volume, page numbers (year)
%% Format for books
%\bibitem{RefB}
%Author, Book title, page numbers. Publisher, place (year)
%% etc
%\end{thebibliography}

\end{document}